\title{Stable Systems with Power Law Conditions for Poisson Hail}
\author{Thomas Mountford \thanks{
		\'{E}cole Polytechnique F\'{e}d\'{e}rale de Lausanne
	}        
\and 
Zhe Wang\thanks{
		\'{E}cole Polytechnique F\'{e}d\'{e}rale de Lausanne
	}        
}
\newtheorem{theorem}{Theorem}[section]
\newtheorem{corollary}{Corollary}[section]
\newtheorem{lemma}{Lemma}[section]
\newtheorem{proposition}{Proposition}[section]
\newtheorem{remark}{Remark}
\numberwithin{equation}{section}
\newenvironment{proof}{{\sc Proof}:}{~\hfill $\square$}
\def\abs#1{\left\vert #1 \right\vert}
\newcommand{\rcom}[1]{\textcolor{black}{#1}}
\newcommand{\rcomm}[1]{\textcolor{black}{#1}}
\newcommand{\edt}[1]{\textcolor{black}{#1}} 
\newcommand{\edtt}[1]{\textcolor{black}{#1}} 
\newcommand{\edttt}[1]{\textcolor{black}{#1}} 
\newcommand{\edtf}[1]{\textcolor{black}{#1}} 
\newcommand{\edtff}[1]{\textcolor{black}{#1}} 
\newcommand{\edtfff}[1]{\textcolor{black}{#1}} 
\newcommand{\edttff}[1]{\textcolor{black}{#1}} 
\newcommand{\edttfff}[1]{\textcolor{black}{#1}} 
\newcommand{\edts}[1]{\textcolor{black}{#1}} 
\newcommand{\edtts}[1]{\textcolor{black}{#1}} 
\newcommand{\edtss}[1]{\textcolor{black}{#1}}  
\newcommand{\edtsss}[1]{\textcolor{black}{#1}}
\newcommand{\ledd}[1]{\textcolor{black}{#1}}
\newcommand{\G}{\mathcal{G}}
\begin{document}
	\newpage
	\maketitle
	\begin{abstract} 
We consider Poisson hail models and characterize up to boundaries the collection of critical moments which {guarantee stability.} In particular, we treat the case of infinite speed of propagation.
	\end{abstract}
	
	\section{Introduction}
\subsection{Model and dynamics}

In this article we treat the Poisson hail model introduced in \cite{11BF}. This can be viewed as a system of interacting queues.  In this system $\mathbb{Z}^d$  represents a collection of servers.  As with a classical system, jobs arrive at random times requiring a random amount of service.  The ``interacting" part is that in the
Poisson hail system the arriving job requires  simultaneous service from a random {\it team} of  servers, 
that is a finite subset of $\mathbb{Z}^d$.   Each individual server operates a first come first served (FCFS)
policy, which in this context means that server {$x \in  \mathbb{Z}^d$} only works on a job once all
previously arrived jobs that required service from $x$ have been completed. Once a team has begun a job they work continuously on the job until completion. The FCFS condition implies, for example, that $x$ may be unable to work on any job at a given moment due to the prior arrival of a job requiring a large team of servers (among them $x$) on which no work is possible until a yet earlier job requiring the work of some server $y$ (also in this big team) is completed. We now define our model more precisely. 

	Let $\Phi=(\Phi_x)_{x \in \mathbb{Z}^d}$ be a collection of independent identically distributed marked  Poisson processes on $\mathbb{R}_+$ with marks in $\mathbb{R}_+\times 2^{\mathbb{Z}^d}$ where the second coordinate  is stipulated to be a finite subset of $\mathbb{Z}^d$.  Each point in $\Phi_x$ is denoted by $(t,\tau,B)$, and the triple corresponds to a job ``centered" at $x$. 
The $t$ stands for the arrival time of the job, while the pair $(\tau,B)$ stands for the service time $\tau$ and the team of servers $x+B$ required for the job.   Here ``centered" has no geometrical meaning beyond requiring $B$ to contain the origin $\mathbf{0}$.
The collection of points $N_x= \{t: \exists (t,\tau,B) \in \Phi_x \}$ is a Poisson process with rate $ \lambda$
for each $x$, and the pairs $(\tau,B)$ are assumed to be i.i.d.  over jobs arriving. Thus in our article, the model is translation invariant.
 We make the simplifying assumption that $B$ is a.s. always a cube, as in \cite{11BF}, with a center at the origin and a radius $R$ (in $l_\infty$-norm, $\abs{x} = \max_{i=1}^d \abs{x_i}$).  As we are to give a sufficient condition for stability (which is to be defined in subsection \ref{subsec: Stability and Results} below),  this is not a very restrictive assumption.
We also discuss general shapes for jobs at the end of subsection \ref{subsec: Stability and Results}, see Remark \ref{remark: estimates for further stable model}.   
Thus a job arriving at server $x$ at time $t$ can be  thought of as a couple $(\tau,R)$, rather than a couple $(\tau,B)$.
$(\tau,R)$ are the {\it sizes} of the job{:} $\tau$ the temporal size and $R$ the spatial. As stated
these ``marks" are i.i.d.  over arrival points and servers.  When we write $P((\tau,R) \in A)$ or $P(R \in B)$ we are referring to the underlying probability distribution for the marks. We write $\tilde P $ for the law of couple $(\tau,R)$ ($( \tau, R) \sim \tilde P$)
so $\tilde P (A) = P((\tau,R)\in A)$. Throughout the paper we assume that the system is nontrivial in the sense that there is interaction between the servers, that is $\tilde P (\mathbb{R}_+ \times\{0\}) < 1$.  
 $(t,\tau,R) $ in $\Phi_x$ signifies that at time $t$ a job arrived needing $\tau$ units of service from the team $x+[-R,R]^d$
(or a job $(\tau,R)$ arrived at time $t$). It should not be clear that such a system defines a queuing process since in general it does not. 

We now describe how a queuing system that respects the FCFC stipulation and corresponds to the given job arrivals may be constructed.
We do not claim that this is the only means of constructing a queueing system that corresponds to job arrivals $\{\Phi_x\}_{x \in \mathbb{Z}^d}$.
We first need to assume that for a single site $x$ (and therefore for all sites by translation invariance) the arrival rate of jobs requiring service from $x$ is finite, i.e.,
\begin{equation} \label{finiterate}
\lambda \sum_{y \in \mathbb{Z}^d} P( \mathbf{0} \in B+y) < \infty . \end{equation}
Under the assumption of nontriviality, this rate is strictly above $\lambda $.
We take a non random sequence of finite subsets of $\mathbb{Z}^d$  that increase up to $\mathbb{Z}^d, \ \varXi_n, n=1,2 \ldots$.
We have by our finite rate assumption that on time interval $[0,n]$, there are only a finite number, $N_n$, of jobs arriving
that require service from some server $x \in \varXi_n$. For fixed $n$, the arrival times are a.s. distinct and ordered
\[
0 < t^n_1 < t^n_2 < \ldots < t^n_{N_n} < n.
\]
The $t^n_i $ and $N_n$ correspond to $\varXi_n $ and not to a particular $x $ in it. We construct a (stage $n$) FCFS queueing system based on this finite set of jobs in a straightforward way:  the job $(\tau^n_r, R^n_r)$ arriving at $x^n_r $ at time  $t^n_r $ will be served on time interval $[\sigma^n_r, \sigma^n_r +\tau^n_r)$ by the serving team $x^n_r + [-R^n_r,R^n_r]^d$ where the
$\sigma^n_r$ are recursively defined (for $n$ fixed) by $\sigma^n_1 =  t^n_1$ and for $1<r\leq N^n$,
\[
\sigma^n_r =\max\bigg\{\max _{x \in  x^n_r + [-R^n_r,R^n_r]^d}   \left(  \max _{j<r: x \in x^n_j + [-R^n_j,R^n_j]^d }
\sigma^n_j + \tau^n_j  \right) , t^n_r \bigg\}.
\]
\noindent
For a fixed job $(\tau,R)$ arriving at a $(t,x)$, then when $t \leq n $ and $x \in \varXi_n$, $t$ is on the above list, i.e.,  $t = t^n_{m(n)} $ for some $1 \leq m(n) \leq N_n $ for $n$ large. 
 As $n$ increases $m(n) $ and $\sigma^n_{m(n)} $ 
increase with $n$ once $m(n)$ is well defined. If for each $x \in \mathbb{Z}^d $ and each job $(t,\tau,R)$ that arrives at  $x$ there exists
finite 
$n_0$ so that 
\begin{equation*}\label{n_0, sigma}
 \forall n \geq n_0, \sigma^n_{m(n)} =  \sigma^{n_0}_{m(n_0)} = \sigma,
\end{equation*}
 then we can define the
interacting queuing system where the job is served by the team $x+[-R,R]^d$ on time interval
$[\sigma, \sigma + \tau)$. 
Suppose that  jobs $(s_1, \tau_1, R_1)$ and  $(s_2, \tau_2, R_2)$ arrive at $x_1 $ and $x_2 $ respectively, with $s_1 < s_2$ and that  $(x_1 +[-R_1,R_1]^d )\cap ( x_2 +[-R_2,R_2]^d )  \ne \emptyset $.
Let the beginning service times for the two jobs at stage $n$ (again for all $n$ sufficiently large)  be respectively $\sigma^n_{m^1(n)} $ and $\sigma^n_{m^2(n)} $.
Then we have from the FCFS policy applied to each stage $n$ finite queueing system $\sigma^n_{m^2(n)} \geq \sigma^n_{m^1(n)}+ \tau_1 $.
Thus the final queueing system respects FCFS. The choice of $(\varXi_n)_n$ above can be arbitrary. For any two sequences $(\varXi_n)_n$ and $(\tilde{\varXi}_n)_n$, there exists $\bar{n}$ and $\underline{n}$ such that $\varXi_{\underline{n}} \subset \tilde{\varXi}_n \subset \varXi_{\bar{n}},$ and therefore, we get from monotonicity that $\sigma_{m(\underline{n})}^{\underline{n}} \leq \tilde{\sigma}_{m({n})}^{n} \leq\sigma_{m(\bar{n})}^{\bar{n}}.$ 
  
  To see when a finite $\sigma$ might exist, we fix $x$ and $t$ and consider a dual model $\{\G^{x,t}_s\}_{0 \leq s \leq t} \ = \ \{\G_s\}_{0 \leq s \leq t}$.  This is defined by the rules
\begin{enumerate}
\item [i]
$\G_0 = \G^ \prime _0 = \{x\}$, $\tau_0 = 0$ (here $ (\G^ \prime _i)_{i \geq 0}$ is the jump chain for $\G _.$)
\item [ii]
for $i \geq 1, \  T_i \ = \ \inf \{s> T_{i-1}: $ so that a job arrives at $t-s$ requiring service from some \ledd{$ y \in \G^ \prime _{i-1} \}$}. Let $x_i + [-R_i, R_i ] ^d$ be the service team for this job. $ \G^ \prime _{i} =  \G^ \prime _{i-1} \cup \left(x_i + [-R_i, R_i ] ^d\right)$.
\item [iii]
for $s \in [T_i , T_{i+1} ) \ \G_{s} =\ G^ \prime _{i} $;  for $T_\infty = \lim_i T_i, \G_s = \mathbb{Z}^d $ on $[t \wedge T_\infty ,t]$.
\end{enumerate}
\noindent
If $T_\infty \leq t$, we say the dual explodes.
This dual model is a similar object to the duals of interacting particle systems.  
 It is easily seen from \eqref{finiterate} that for all $x,t$, if $\G^{x,t}$ does not explode then for every job arriving requiring service from $x$ in time interval $[0,t]$ there exists $\sigma$  so that for all $n$ large $\sigma^n_{m(n)}  = \sigma$ and so the queuing system is defined if $\G^{x,t}$ does not explode for every $x$ and $t$. In the subsection \ref{subsec: admissible path} below, we will see the connection between the dual $\G^{x,t}$ and admissible paths.

 A central quantity of the model is the workload at site $x$ and time $t$, and we denote it by $W(t,x)$. We discuss the workload of a system heuristically and will provide a formal definition in subsection \ref{subsec: admissible path}. Intuitively speaking, $W(t,x)$ is the additional time required {beyond} time $t$ for server $x$ to have serviced all jobs (requiring service from $x$) arriving on time interval $[0,t]$.
  We can understand the dynamics of $W(t,x)$ with the following two equations. Suppose  job $(\tau,R )$ arrives at server $y$ at time $t$.
 Then by the FCFS rule,
\begin{equation} \label{eq: updating rule}
W(t+,x)= \begin{cases}
\sup_{ \abs{z-y} \leq R } W(t-,z) + \tau,  \text{ if } \abs{x-y} \leq R, \\
W(t_-,x), \text{ otherwise,}
\end{cases}
\end{equation} where $\abs{x-y}$ stands for the $l_\infty$-norm in $\mathbb{Z}^d$.  By convention, we always assume that \ledd{$t \rightarrow W(t,x)$} is right continuous: $ W(t,x) =  W(t+,x)$. Suppose no job {requiring service from} site $x$ arrives during time $[s,t]$. Then $W(t,x)$ decreases linearly in $t$ at rate $1$ \edtff{until zero:}
\begin{equation} \label{eq: updating rule 2}
W(t,x) = \max \{ \left(W(s,x) - (t-s)\right),0\},
\end{equation}
see \cite{16BCF,11BF,18FKM}.
The following initial condition for $W(t,x)$ is in force throughout the paper: for all $x$ in $\mathbb{Z}^d$
\begin{equation}\label{eq: initial condition}
W(0,x)=0.
\end{equation}

	One way to visualize the model and the workload $W(t,x)$ is to think of jobs as hailstones falling randomly on a hot ground, and this model will be similar to the well-known game, Tetris! A job $(\tau,R)$ received by a server $x$ can be viewed as a hailstone with a base \ledd{$x+[-R,R]^d$} and a height $\tau$. The hailstone falls on sites $x+[-R,R]^d$, and the FCFS rule requires the hailstone to fall on top of all previously arrived hailstones that required service from a server in  $x+[-R,R]^d$. The hailstone starts to melt at a constant rate 1 once its base touches the hot ground. In this way, $W(t,x)$ can be interpreted as the height function at site $x$ evolving in time $t$. \edtsss{See Figure \ref{fig: tetris} for an illustration of how workload changes when a job arrives. In this example, we fix a time $t$, and a job with size $(\tau,R) = (1,2)$ arrives at site $-1$. Before its arrival, the workload at sites $-4,-3,2$ is $2$, at sites $-2,-1, 0, 3$ is $1$, and at sites $1, 4$ is $3$. After its arrival, the workload at sites $-4, 2$ is $2$, at sites $-3,-2,-1,$ is $4$, at site $3$ is $1$, and at site $4$ is $3$.}

	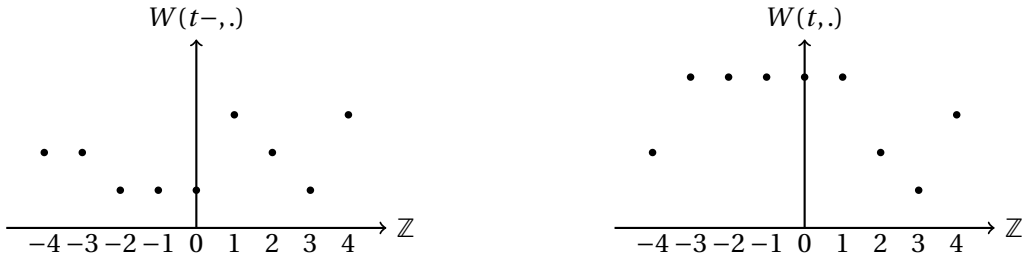
\begin{figure}[h!]
		\begin{center}
			\begin{tikzpicture}[thick]
				\draw[->]
				(-5,0) -- (-5,2.5) node[above]{$W(t-,.)$} ;
				\draw[->]
				(-7.5,0) -- (-2.5,0) node[right]{$\mathbb{Z}$} ;
						\draw
						(-7,-0.2) node{$-4$};
						\draw
						(-6.5,-0.2) node{$-3$};
						\draw
						(-6,-0.2) node{$-2$};
						\draw
						(-5.5,-0.2) node{$-1$};
						\draw
						(-5,-0.2) node{$0$};
						\draw
						(-4.5,-0.2) node{$1$};
						\draw
						(-4,-0.2) node{$2$};
						\draw
						(-3.5,-0.2) node{$3$};
						\draw
						(-3,-0.2) node{$4$};
						
	\filldraw[black] (-7,1) circle (1pt);
	\filldraw[black] (-6.5,1) circle (1pt);
	\filldraw[black] (-6,.5) circle (1pt);
	\filldraw[black] (-5.5,.5) circle (1pt);
	\filldraw[black] (-5,.5) circle (1pt);
	\filldraw[black] (-4.5,1.5) circle (1pt);
	\filldraw[black] (-4,1) circle (1pt);
	\filldraw[black] (-3.5,.5) circle (1pt);
	\filldraw[black] (-3,1.5) circle (1pt);

				\draw[->]
				(3,0) -- (3,2.5) node[above]{$W(t,.)$} ;
				\draw[->]
				(0.5,0) -- (5.5,0) node[right]{$\mathbb{Z}$} ;
				\draw
				(1,-0.2) node{$-4$};
				\draw
				(1.5,-0.2) node{$-3$};
				\draw
				(2,-0.2) node{$-2$};
				\draw
				(2.5,-0.2) node{$-1$};
				\draw
				(3,-0.2) node{$0$};
				\draw
				(3.5,-0.2) node{$1$};
				\draw
				(4,-0.2) node{$2$};
				\draw
				(4.5,-0.2) node{$3$};
				\draw
				(5,-0.2) node{$4$};
			
			\filldraw[black] (1,1) circle (1pt);
			\filldraw[black] (1.5,2) circle (1pt);
			\filldraw[black] (2,2) circle (1pt);
			\filldraw[black] (2.5,2) circle (1pt);
			\filldraw[black] (3,2) circle (1pt);
			\filldraw[black] (3.5,2) circle (1pt);
			\filldraw[black] (4,1) circle (1pt);
			\filldraw[black] (4.5,0.5) circle (1pt);
			\filldraw[black] (5,1.5) circle (1pt);
				
			\end{tikzpicture}
			\caption{Workloads before and after a Job Arrival.}\label{fig: tetris}
		\end{center}
	\end{figure}

\subsection{Admissible paths, Workload, and Time Scales}\label{subsec: admissible path}
 We now rigorously define {workload} via \ledd{\textit{admissible}} paths.
Suppose the arrival times $\left(\rcom{N_x}\right)_{x\in\mathbb{Z}^d}$ and spatial sizes $R$ of jobs are known. For any $0\leq u\leq t$, an admissible path is a piece-wise constant, right-continuous (c\`{a}dl\`{a}g) function \ledd{$\gamma_{}: [u,t] \rightarrow \mathbb{Z}^d$} such that,  
if $\gamma(s) \neq  \gamma(s-) $, there is a job arriving at time $s \in [u,t]$ with center $x$ \edtt{for some $x\in\mathbb{Z}^d$} (equivalently, ${N_x(s)=N_x(s-)+1}$) and sizes $(\tau,R)$, and it {\it intersects} the path $\ledd{\gamma_{}}$ in the sense:
\begin{equation}\label{eq: intersection}
\ledd{\gamma_{}}(s),\ledd{\gamma_{}}(s-) \in B(x,R):=\{y:\abs{y-x}
	\leq R\}.
\end{equation} \ledd{We also use $\gamma_{u,t}$ when we want to emphasize that the admissible path is on a fixed interval $[u,t]$.}
\ledd{So,} given $x \in \mathbb{Z}^d$ and $t >0$, the dual model $\{ \G^{x,t}_s\}_{0\leq s \leq t}$ has
$y \in \G^{x,t}_s$ if and only if there exists an admissible path {$\gamma_{t-s,t}:[t-s,t] \rightarrow \mathbb{Z}^d $}
with $\gamma _{t-s,t}(t-s) = y $ and  $\gamma_{t-s,t} (t) = x $.
For each admissible path $\gamma_{u,t}$, we can define its {\it load } 
\begin{equation} \label{eq: workload of a path}
	U(\gamma_{u,t})=\sum \tau_i,
\end{equation} by summing over all jobs intersecting the admissible path $\gamma_{u,t}$ in the sense of \eqref{eq: intersection}, and assign it a score 
\begin{equation} \label{eq: path workload and score}
V(\gamma_{u,t}) = U(\gamma_{u,t})-(t-u)=\sum \edtt{\tau_i} - (t-u).
\end{equation} 
The workload at the site $(t,x)$ is the maximal score over all admissible {paths} \ledd{$\gamma_{u,t}$} starting at some positive time \ledd{$u\in [0,t]$} and ending at $\ledd{\gamma_{u,t}}(t)=x$, see \cite{11BF},
\begin{equation} \label{eq: def of workload}
W(t,x) = \sup_{0\leq u\leq t} \left( \sup_{\substack{\gamma_{u,t}(t)=x,\\ \text{$\gamma_{u,t}$ admissible }}}V(\gamma_{u,t}) \right).  
\end{equation} 
If one admits the value $\infty $ as a possible value, then $W(t,x)$ is well defined given any $\{ \Phi(x)\}_{x\in\mathbb{Z}^d}$, {whether a queuing model can be defined or not}.  However in the case where the dual model does not explode we have that \ledd{the random variable $W(t,x)$ equals the additional time that the queuing model required until $x$ has served all jobs that arrived before the time $t$, and required service from $x$}, see \cite{16BCF,11BF,18FKM}.
If one takes \eqref{eq: def of workload} as a definition of $W(t,x)$ without referencing to a queuing system, then in fact, {(see Remark \ref{remark: estimates for further stable model} after Theorem \ref{thm: unstable models in II, III, IV} )} there are nontrivial models that have workload $W(t,x) =\infty$ almost surely for any time $t>0$. However, this is not the focus of our article. Instead, we study the \textit{stability} of the system, and derive sufficient conditions for it, though these sufficient conditions for stability imply that the workload is finite for any positive time $t$ almost surely. From \eqref{eq: def of workload}, one can verify the properties \eqref{eq: updating rule} and \eqref{eq: updating rule 2}. See Figure \ref{fig: illustration, admissible path, score,workload} for an illustration for an admissible path, its score and workload. In this example, red horizontal intervals represent job arrivals with temporal sizes $\tau_i$. The blue line represents an admissible path $\gamma_{u,t}$, which passes jobs $\tau_2,\tau_4,\tau_6, \tau_7 $. The  load of $\gamma_{u,t}$ is $U(\gamma_{u,t})= \tau_2 +\tau_4+\tau_6+ \tau_7$, and $\gamma_{u,t}$ has a score $V(\gamma_{u,t})= \tau_2 +\tau_4+\tau_6+ \tau_7 - (t-u)$. $W(t,\mathbf{0})$ is the supremum of these scores over all such paths starting
	from some $(u, y)$ and ending at $(t, \mathbf{0})$.
	
\begin{figure}[h!]
	\begin{center}
	\begin{tikzpicture}[thick]
		\draw[->]
		(-5.5,-2) -- (-5.5,4) node[left]{$time$} ;
		\draw[->]
		(-6,0) -- (3,0) node[right]{$\mathbb{Z}$} ;
		
		\draw[dashed]
		(-6,3.75) -- (3,3.75) node[right]{$t$}  ;
		\draw[dashed]
		(-6,-.3) -- (3,-.3) node[right]{$u$};
		\draw[dashed]
		(-6,-1.8) -- (3,-1.8) node[right]{$0$};

		\foreach \x in {-4,...,3}
			\draw (-1+\x,4) node[above]{\x}-- (-1+\x ,-1.8);
			
		\draw[red,very thick] (-1+-4, -0.6)--(-1+-3, -0.6)  node[below]{$\tau_1$} --(-1+-2, -0.6);
		\draw[red,very thick] (-1+-3, 0.2)--(-1+-1, 0.2)  node[below]{$\tau_2$} --(-1+1, 0.2);
		\draw[red,very thick] (-1+-0.5, 0.8)--(-1+0, 0.8)  node[below]{$\tau_3$} --(-1+0.5, 0.8);
			\draw[red,very thick] (-1+-1.5, 1.2)--(-1+-1, 1.2)  node[below]{$\tau_4$} --(-1+-0.5, 1.2);
				\draw[red,very thick] (-1+-0.5, 0.8)--(-1+0, 0.8)  node[below]{$\tau_5$} --(-1+0.5, 0.8);
		\draw[red,very thick] (-1+-2.8, 1.7)--(-1+0, 1.7)  node[below]{$\tau_6$} --(-1+3.2, 1.7);
			\draw[red,very thick] (-1+-0.2, 3)--(-1+2, 3)  node[below]{$\tau_7$} --(-1+4, 3);
			\draw[red,very thick] (-1+-4, 3.3)--(-1+-3, 3.3)  node[below]{$\tau_8$} --(-1+-2,3.3);
			
	\draw[blue,ultra thick] (-1+-2, -.3)--(-1+-2, .2)
	--(-1+-1, .2)--(-1+-1, 1.7) -- (-1+3, 1.7)
	-- (-1+3, 2.2) node[right]{$\gamma_{u,t}$} 
	-- (-1+3, 3) --  (-1+0, 3)--  (-1+0, 3.75);
	
	\end{tikzpicture}
		\caption{Admissible Path, Score, and Workload.}\label{fig: illustration, admissible path, score,workload}
	\end{center}
\end{figure}
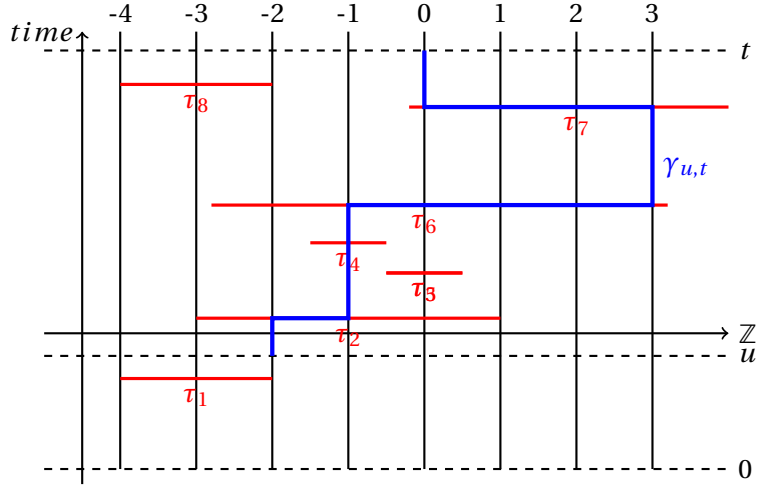

We set the initial starting point $\gamma(0)=\edtt{\mathbf{0}}$ to obtain a new random variable 
\begin{equation} \label{eq: alternative def}
\tilde{W}(t,\edtt{\mathbf{0}}) =\sup_{ 0\leq u\leq t} \sup_{\substack{\gamma_{0,u}(0)=\mathbf{0},\\ \text{$\gamma_{0,u}$ admissible}}} V(\gamma_{0,u}),
\end{equation} 
which by time reversibility and homogeneity of Poisson processes has the same distribution as $W(t,x)$ for any $\ledd{(t,x)}$. An advantage of using $\tilde{W}(t,\edtt{\mathbf{0}})$ is that $\tilde{W}(t,\edtt{\mathbf{0}})$ is increasing in $t$ almost surely. This monotonicity is different from those in {Remark \ref{remark: estimates for further stable model}, below} and we apply it in Lemma \ref{lm: BC lemma}. Since the growth estimate in Lemma \ref{lm: BC lemma} is related to $\sup U(\gamma_{0,t})$, we also remark that $\tilde{W}(t,\edtt{\mathbf{0}})$ involves all admissible paths with the initial point $(0,\edtt{\mathbf{0}})$  and some ending time $u\leq t$, whereas $W(t, \mathbf{0}) $ 
involves admissible paths with the fixed final point $(t,\mathbf{0})$.  Another advantage of $\tilde{W}(t,\edtt{\mathbf{0}}) $ is that it enjoys {superadditivity properties.}

\subsection{Stability and Results} \label{subsec: Stability and Results} 
We now define {\it stability}: 
\ledd{From \eqref{eq: def of workload}, given a system $\{ \Phi(x)\}_{x\in\mathbb{Z}^d}$, we see that $W(t,\mathbf{0})$ is stochastically increasing in $t$}.  Given a law $\tilde P$ for the couple $(\tau, R)$, we say that the family of $\{ \Phi(x)\}_{x\in\mathbb{Z}^d}$ as $\lambda $ varies, is {\it stable} if there exists a $\lambda_1>0$ so that for rate $0 < \lambda < \lambda_1$, the system  $\{ \Phi(x)\}_{x\in\mathbb{Z}^d}$ is such that $\{W(t,\mathbf{0})\}_{t \geq 0}$ is tight.  
{In this case,  we also say $\tilde P$ is tight.}
We are interested in the stability of systems. We stress that the notion of stability depends on the law $\tilde P$ and not on the existence of a queueing model.
It is not clear that a queueing system can be defined when our dual model explodes.  But for such a model to exist {meaningfully } it must have its workload given by \eqref{eq: def of workload}.
By translation invariance $\tilde P $ is stable if and only if for any $x \in \mathbb{Z}^d$, $\{W(t,x)\}_{t \geq 0}$ is tight.
The central question in this article is to understand the stability of the system.
It is natural to expect that if $\lambda$ is small, $W(t,\edtt{\mathbf{0}})$ does not grow fast in time, and it decreases due to no arrivals of job but if a job of large spatial size follows a job of large temporal size, the system has many sites with large workload. 
In fact, one can show one of the three scenarios holds for a system:
\begin{enumerate}[label= \alph*)]
 \item $W(t,\mathbf{0})$ is \textit{infinite} for \edtff{some} time $t>0$ with probability one,
\item the system is unstable but with $W(t,\mathbf{0})$ \textit{finite}  for any time $t>0$ with probability one,
\item or the system is stable with $W(t,\mathbf{0})$ \textit{finite} for any time $t>0$ with probability one.
\end{enumerate}

In \cite{18FKM}, 
Foss, Konstantopoulos, and Mountford showed that a finite $d+1+\epsilon$-th moment for the sum of $R$ and $\tau$ is sufficient for stability, and they also showed that for every $0<\epsilon<d+1$, there is an unstable system with a finite $d+1-\epsilon$-th moment for the sum of $R$ and $\tau$. As a consequence, $d+1$ is a critical moment for the sum of $R$ and $\tau$. Their method is via techniques introduced in the  study of greedy lattice animals of $\mathbb{Z}^d$, see \cite{93CGGK,02M} for instance. We can interpret these two results by considering critical moments for positive random variables $R$ and $\tau$. Let 

\begin{equation}\label{eq: critical moment}
\alpha := \sup\{a\geq -d : \mathbb{E}\left[R^{d+a}\right]<\infty \},\quad  \beta := \sup\{b\geq -1: \mathbb{E}\left[ \tau ^{1+b}\right]<\infty \}.
\end{equation}
So every law $\tilde P $ defines a point $(\alpha, \beta )$ in$ [-d,\infty] \times  [-1,\infty].$ We say that a law $\tilde P $ is in a region $A$ if the corresponding pair $(\alpha,\beta)$ is in $A$. We divide up the space for $(\alpha,\beta)$ into regions, see Figure \ref{fig: phase diagram}.
\begin{figure}[h!]
	\begin{center}
		\begin{tikzpicture}[thick]
		\draw[->]
		(-0.2,0) -- (-0.2,3) node[above]{$\alpha$} ;
		\draw[->]
		(-0.2,0) -- (6,0) node[right]{$\beta$} ;
		\draw[very thick,line cap=round,smooth,domain=2:6]
		plot[samples=500] (\x, {2/\x}) ;
		\draw (6,0.5) node[right]{$\alpha \beta = d$};
		\draw [dashed]
		(-0.2,1) node[left]{$1$} -- (6,1) ;
		\draw [dashed]
		(2,0) node[below]{$d$} -- (2,3); 
		\draw
		(4,2.2) node{$I$, Stable} ;
		\draw
		(0.9,0.4) node{$II$, Unstable};
		\draw
		(0.9,2.2) node{$III$, Unstable};
		\draw
		(3,0.4) node{$IV$};
		\draw
		(5,0.75) node{$V$};
		\end{tikzpicture}
		\caption{Stability and Critical Moments}\label{fig: phase diagram}.
	\end{center}
\end{figure}
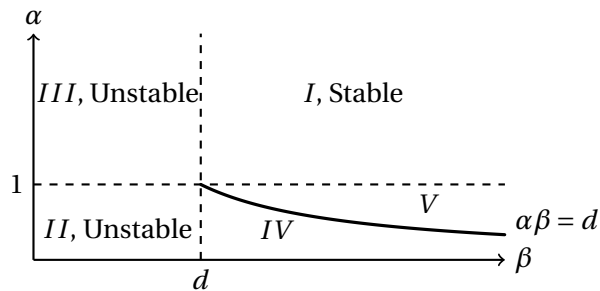

The results of  \cite{18FKM} say that the system is always stable in the region I$= \{\alpha>1, \beta>d\}$, and there exists an unstable system for every point in the region II$= \{\alpha<1, \beta<d\}$. Stability in region III$= \{\alpha>1, \beta<d\}$, region IV$= \{0\leq\alpha<1, \beta>d, \alpha\beta<d \}$, and region \edtff{V$= \{0<\alpha<1, \alpha\beta>d\}$} are not clear from their results.
Our results are 

\begin{theorem} \label{thm: unstable models in II, III, IV} 
		Consider the Poisson hail problem in any dimension $d\geq 1$. Let $d+\alpha, 1+\beta$ be the critical moments for $R$ and $\tau$ as defined in  \eqref{eq: critical moment}. Then
	\begin{enumerate}
		\item every nontrivial system in region II and III, $\edtt{ \{\beta<d \}}$ is unstable; 

		\item \edtt{
		every nontrivial system in $\{\alpha<0 \text{ or } \beta \leq 0\}$ is  unstable;}
		
		\item for every point $(\alpha,\beta)$ in region IV ${= \{\alpha \beta <d, \beta >d , 0<\alpha <1\}}$, there is an unstable system with  
		\[  \mathbb{E}\left[R^{d+\alpha} + \tau^{1+\beta} \right]<\infty.     \]
	\end{enumerate}
\end{theorem}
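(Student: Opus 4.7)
The theorem comprises three instability claims, and my plan uses a distinct mechanism for each; I also note at the outset that the subcase $\beta\le 0$ of Part 2 is already subsumed by Part 1, since $d\ge 1$.

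For Part 2 in the case $\alpha<0$, my plan is to prove the stronger statement $W(t,\mathbf{0})=+\infty$ almost surely for every $t>0$. The key observation is that $\alpha<0$ forces $\mathbb{E}[R^{d}]=+\infty$, so $\sum_{y\in\mathbb{Z}^d}\mathbb{P}(R\ge|y|)$, which is of order $\mathbb{E}[(2R+1)^{d}]$, diverges. Consequently, the thinned point process of those Poisson jobs whose support $x+B$ covers $\mathbf{0}$ has infinite intensity in time, so the origin is touched by infinitely many jobs in any interval $(0,t)$. Since each such arrival raises $W(\cdot,\mathbf{0})$ by $\tau$ (with $\mathbb{E}[\tau]>0$ by nontriviality) while decay is only at unit rate, a monotone truncation on $\{|y|\le r\}$ with $r\to\infty$ forces $W(t,\mathbf{0})=+\infty$ almost surely.

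For Part 1 I would produce, for arbitrarily large $T$, a workload at the origin of order strictly bigger than $T$, concentrating on bounded $R$ since heavy $R$-tails would make a single-job argument simpler. Take the space-time window $[-T,T]^{d}\times[0,T]$: it carries order $\lambda T^{d+1}$ Poisson jobs, and extreme-value scaling under tail $\mathbb{P}(\tau>t)\asymp t^{-(1+\beta)}$ gives a maximum $\tau^{*}\asymp T^{(d+1)/(1+\beta)}$. The condition $\beta<d$ is precisely $(d+1)/(1+\beta)>1$, so $\tau^{*}\gg T$. The realizer sits at some $(y^{*},t^{*})$ with $|y^{*}|\le T$ and creates workload at least $\tau^{*}$ at $y^{*}$; I would transport this workload to the origin by exhibiting a chronologically ordered chain of Poisson jobs in a tube of bounded spatial width joining $y^{*}$ to $\mathbf{0}$ over $[t^{*},T]$. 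Such a tube contains order $\lambda T^{2}$ jobs, a density high enough for a second-moment / Peierls argument to yield the chain with probability bounded away from zero, and along it the workload decays only by elapsed time (at most $T$), giving $W(T,\mathbf{0})\ge\tau^{*}-O(T)\to\infty$ in probability.

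For Part 3, given $(\alpha,\beta)$ in region IV, my plan is to exhibit a strongly correlated pair $(R,\tau)$ and prove its instability for every $\lambda>0$. Set $\gamma:=(d+\alpha)/(1+\beta)$; the hypothesis $\alpha\beta<d$ rearranges to $\gamma>\alpha$. Along the geometric scales $L_{n}=2^{n}$ put $(R,\tau)=(L_{n},L_{n}^{\gamma})$ with probability $p_{n}=cL_{n}^{-(d+\alpha)}(\log L_{n})^{-2}$, placing the residual mass at $(R,\tau)=(1,1)$; a direct computation then gives $\mathbb{E}[R^{d+\alpha}]=\mathbb{E}[\tau^{1+\beta}]=c\sum_n 1/(n\log 2)^2<\infty$ while any strictly larger moment diverges, placing $(\alpha,\beta)$ at the prescribed critical values. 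For instability, fix $n$ and consider only the scale-$n$ jobs whose ball covers $\mathbf{0}$: this is a Poisson process of rate $\sim\lambda L_{n}^{-\alpha}/(\log L_{n})^{2}$, and each arrival raises $W(\cdot,\mathbf{0})$ by $L_{n}^{\gamma}$. A pathwise monotonicity in the job set then dominates $W(T,\mathbf{0})$ from below by the workload of an M/D/1 queue with utilization $\rho_{n}=\lambda L_{n}^{\gamma-\alpha}/(\log L_{n})^{2}$; since $\gamma>\alpha$ one has $\rho_{n}\to\infty$, so for every $\lambda>0$ some $n$ produces an overloaded queue whose workload grows linearly in time, contradicting tightness.

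The genuinely delicate step is the chaining estimate of Part 1: formally producing a timelike chain of overlapping jobs between $(y^{*},t^{*})$ and $(\mathbf{0},T)$ with probability bounded away from zero uniformly in $T$ in a bounded-$R$ Poisson environment is a first-passage-type statement that I expect to be the true technical core, whereas the other two parts rely on the comparatively routine ingredients of Poisson thinning and stochastic domination by an overloaded M/D/1 queue.
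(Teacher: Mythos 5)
Your Part 3 is correct: the moment computation for the pair $(R,\tau)=(L_n,L_n^{\gamma})$, $\gamma=(d+\alpha)/(1+\beta)$, checks out, and since adding or keeping only a subset of jobs can only lower the workload, the constant path at the origin fed only by the scale-$n$ origin-covering jobs is exactly an M/D/1 workload with utilization $\rho_n\sim\lambda L_n^{\gamma-\alpha}(\log L_n)^{-2}\to\infty$; this is a simpler route than the paper's space-time boxes $[-R_i/4,R_i/4]^d\times[n\tau_i/4,(n+1)\tau_i/4]$ plus a law of large numbers, though in the same spirit. Your observation that $\beta\le 0$ is subsumed by Part 1 also matches the paper's reduction. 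The other two parts, however, have genuine gaps.

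For Part 1 the transport step is not just the technical core, it is false as you set it up. With bounded $R$ and arrival rate $\lambda$, chronologically ordered chains of overlapping jobs propagate at speed $O(\lambda)$: a chain joining $(y^*,t^*)$ to $(\mathbf{0},T)$ with $|y^*|$ of order $T$ and time budget at most $T$ needs speed of order $1$, and its probability decays exponentially in $|y^*|$ once $\lambda$ is small (each step displaces by at most $2R$, so at least $|y^*|/(2R)$ jobs are needed, and the time ordering contributes a $1/k!$, so already the expected number of such chains is exponentially small; no second-moment or Peierls argument can give a probability bounded away from zero). Since instability must hold for every $\lambda>0$, you must look for the extreme job only inside a cone or window of spatial radius $c(\lambda)T$, whose volume is still of order $\lambda^{d}T^{d+1}$ so that the extremal $\tau$ still dwarfs $T$; this speed-aware geometry is exactly what the paper's boxes and the constant $\sigma$ encode, and what Proposition 4.1 of the paper quantifies. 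In addition, $\beta$ is defined through moments, not regular variation, so $\tau^*\asymp T^{(d+1)/(1+\beta)}$ is only available with an $\epsilon$-loss and along a subsequence of scales (the paper's sequence $k_l$); your argument must be run along such a subsequence.

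For Part 2 your thinning argument tacitly assumes that the heavy $R$-tail carries non-negligible $\tau$. The pair $(\tau,B)$ is drawn jointly and may be dependent: take for instance $\tau=2^{-R}$ with $P(R\ge r)\sim r^{-(d-1/2)}$, a nontrivial system with $\alpha<0$, for which $\sum_{y}\mathbb{E}\left[\tau\,\mathbf{1}\{R\ge|y|\}\right]<\infty$ and $\sum_{y}P\left(R\ge|y|,\ \tau\ge\delta\right)<\infty$ for every $\delta>0$; the jobs covering the origin then contribute an a.s.\ finite total $\tau$, so $W(t,\mathbf{0})=\infty$ does not follow from your argument. Instability in the regime $\alpha<0$, $\beta\ge d$ comes instead from the propagation of workload maxima over huge balls by the big-$R$ jobs (the paper's route: connection probability over $[-2^k,2^k]^d$ tending to one, then harvesting moderate-$\tau$ jobs with a law of large numbers), not from the direct $\tau$-contribution of origin-covering jobs. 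Your argument is fine when $\tau$ and $R$ are independent, or when $\tau$ is bounded below on the event that $R$ is large, but the theorem is stated for arbitrary joint laws.
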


and

\begin{theorem}\label{thm: stability in V}
	Consider the Poisson hail problem in dimension $d\geq1$. Any system with parameters $(\alpha,\beta)$ in region {V$= \{\alpha \beta >d, \edtff{0<\alpha <1} \}$} or \edtf{on the ray $\{(\alpha,\beta): \beta> d,\alpha =1 \}$} is stable.
\end{theorem}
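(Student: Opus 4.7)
The plan is to adapt the greedy lattice animal approach of \cite{18FKM} via a multiscale decomposition in $R$, using the condition $\alpha\beta>d$ as a balance between the rarity of large jobs and the light tail of $\tau$. The first step is to dominate $W(t,\mathbf{0})$ by the maximum over all \emph{causal chains} of jobs terminating at the origin: a chain is a sequence $(j_1,\ldots,j_n)$ of jobs with arrival times $t_1<\cdots<t_n<t$ such that consecutive jobs spatially overlap (i.e.\ $|y_i-y_{i+1}|\leq R_i+R_{i+1}$) and $\mathbf{0}\in B(y_n,R_n)$; the FCFS rule \eqref{eq: updating rule} then gives $W(t,\mathbf{0})\leq\max_{\text{chains}}\sum_{i=1}^n \tau_i$. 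Stability will follow once we show that this maximum is tight, uniformly in $t$, for $\lambda$ sufficiently small.

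To control the maximum, I would decompose the jobs into dyadic scales $R\in[2^k,2^{k+1})$ for $k\geq 0$. The per-site Poisson intensity at scale $k$ is $\lambda_k\leq C\lambda\, 2^{-k(d+\alpha)}$, so large-scale jobs are rare. For a \emph{single-scale} chain at level $k$ of length $n$, the spatial extent is $O(n\,2^k)$, giving at most $(C\,2^{kd})^n$ possible chains through the origin; combined with the Poisson rate this contributes an expected count $\bigl(C\lambda\,2^{-k\alpha}\bigr)^n$. Using $\mathbb{E}[\tau^{1+\beta}]<\infty$, a Chebyshev/moment estimate shows that the probability that $\sum_{i=1}^n\tau_i$ exceeds a level $W$ decays polynomially in $W$ at rate tied to $n\beta$. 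Putting these bounds together, the probability that \emph{some} single-scale chain at level $k$ contributes more than $W$ to the workload decays summably in $k$ once $\alpha\beta>d$: the geometric factor $2^{-k\alpha n}$ exactly beats the combinatorial factor that arises when balancing chain length against $W$ via the $\tau$ moment.

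The hard part will be handling \emph{mixed-scale chains}, where jobs from several scales string together so that a single very large job can propagate workload from distant small-scale activity into the origin. Here one has to count chains whose geometry is controlled by the largest $R$ in the chain but whose temporal cost is an aggregate of i.i.d.\ $\tau$'s with the same tail; the key is an induction on the maximum scale appearing, peeling off the rarest (largest-scale) jobs first and absorbing their contribution using a conditional chain estimate for the remaining scales. I would expect the condition $\alpha\beta>d$ to enter twice: once in each single-scale bound, and once more when summing over the possible placements of the large ``bridge'' jobs within a mixed-scale chain. The boundary case $\alpha=1$, $\beta>d$ should then follow by the same argument with the summation being only borderline, controlled by choosing $\lambda$ small. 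Finally, a standard monotone-coupling/subadditivity argument (as in \cite{11BF,18FKM}) upgrades the tightness of $W(t,\mathbf{0})$ from the constructed upper bound to the actual Poisson Hail dynamics, yielding stability for all $\lambda<\lambda_c$ with $\lambda_c>0$.
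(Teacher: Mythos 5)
Your starting point --- bounding $W(t,\mathbf{0})$ by the maximum over all ``causal chains'' of overlapping jobs ending at the origin --- is essentially the admissible-path representation \eqref{eq: def of workload} that the paper also uses, and the high-level idea of a multiscale decomposition in $R$ with rarity at large scales traded against the $\tau$-tail via $\alpha\beta>d$ is the correct intuition. However, the choice of a \emph{dyadic} decomposition $R\in[2^k,2^{k+1})$ is, I believe, the wrong tool, and it is precisely here that your sketch diverges from what is needed. The paper's first and key move (Remark \ref{remark: estimates for further stable model} and Theorem \ref{thm: existence model in IV}) is to use monotonicity to coarsen $R$ and $\tau$ onto \emph{super-exponentially growing} sequences $(S_i)$, $(T_j)$ of the form \eqref{eq: concrete choice of S, T,0}, which forces the associated time scales $\Delta t_{2i-1}=S_i^\alpha$ and $\Delta t_{2i}=T_i^{1+\delta}$ to be well-separated in the strong sense \eqref{eq: well-separated}. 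This is what renders the ``mixed-scale'' problem tractable: on a time scale $\Delta t_{2i}$ one can condition on the event $B_i$ that no job of spatial size $S_{i+2}$ or larger ever appears along any admissible path (eq.~\eqref{eq: no large spatial job}), and the residual influence of scale $S_{i+1}$ is handled by a genuine inductive estimate (Proposition \ref{lm: no influence}) in which the exponent in the exponential moment is degraded by a fixed factor at each scale. With dyadic scales, $S_{k+1}/S_k=2$ is bounded, many scales are active on any given time interval, and the corresponding conditioning events and inductive losses do not close.

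The other gap is that a Chebyshev/moment estimate on $\sum\tau_i$ is too weak for the Borel--Cantelli argument that must ultimately deliver tightness (Lemma \ref{lm: BC lemma}). What the paper actually controls is an \emph{exponential moment} of the maximal number $n_j(\gamma_{0,t})$ of jobs of temporal size $T_j$ over all admissible paths (Proposition \ref{lm: leading order}, Corollary \ref{cor: exponential moments}), proved by covering the path with space-time boxes in a Vitali-type argument and feeding in a new \emph{connectivity estimate} (Proposition \ref{lm: d>1 case}, Corollary \ref{cor: exponential estimates}): for $t\le S_n^\alpha$, the probability $p_n(x,t)$ to $n$-connect $(\mathbf{0},0)$ and $(x,t)$ decays geometrically in $|x|_\infty/S_n$, uniformly in $n$ when $\lambda$ is small. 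You do not have an analogue of this uniform connectivity bound, and in region $V$ (where the range $D(t)$ of admissible paths grows superlinearly, as discussed around \eqref{eq:time scales}) the naive count $(C\,2^{kd})^n$ of chains through the origin is not justified once larger-scale jobs can bridge distant regions. In short, your single-scale heuristic is plausible, but the part you label ``the hard part'' --- controlling mixed-scale chains --- is the technical core of the paper, requires the super-exponential discretization and the connectivity estimate, and is not something one can wave through by ``induction on the maximum scale'' at the dyadic granularity.
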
 

These results (and \cite{22W}) permit a classification of the models according to the parameters $(\alpha, \beta)$. We do not consider boundaries between regions though for some boundaries simple monotonicity
considerations permit a classification.
The proofs of these various {classifications are obtained }as follows.
\begin{enumerate}
\item [I]
This is shown in \cite{18FKM}.
\item [II]
This is the content of Theorem \ref{thm: unstable models in II, III, IV},part 1.
\item [III]
This is the content of Theorem \ref{thm: unstable models in II, III, IV},part 1.
\item [IV]
This is the content of Theorem \ref{thm: unstable models in II, III, IV},part 3 and \cite{22W}.
\item [V]
This is the content of Theorem \ref{thm: stability in V}.
\end{enumerate}
\noindent
The region $IV$ is the only one for which the stability or instability of a system with {$(\alpha, \beta)$ in the given region is not determined by the region.  In fact  \cite{22W} shows that for any $(\alpha, \beta) $ }in $IV$, there exist both stable models and unstable models corresponding to $(\alpha, \beta)$.

We end this subsection  remarking on monotonicity properties for the workload $W(t,x)$ which can be found in \cite{18FKM}. One can also verify these properties with { the } formula for workload $W(t,x)$, see \eqref{eq: def of workload} in the {previous} subsection.
\begin{remark}\label{remark: estimates for further stable model}
\edtt{The model has monotonicity properties}: $W(t,x)$ {increases} if we 
		\begin{enumerate*} [label = (\alph*)]
			\item increase the temporal size of the stones, or
			\item enlarge the spatial shape. 
		\end{enumerate*} Due to the monotonicity of the model,
		\begin{enumerate}	
			\item We \ledd{treat} general convex shapes for hailstones in the system. One way is to generalize $R$ as the maximal distances between two points in the hailstone (under \ledd{a} certain norm). By enlarging the spatial shape of the hailstone to a cube with diameter R, we get an upper bound for $W(t,x)$. Together with Theorem \ref{thm: existence model in IV}, we can get a similar upper bound for {workloads} for general shapes. This ``upper bound" may not be optimal if the shape does not have nonempty interior.			
			
			\item We can construct a new system from a generic stable system by the following strategy: for fixed increasing sequences $(S_i), (T_j)$, we enlarge the job sizes $(\tau,R)$ to $(T_{j+1},S_{i+1})$, if $S_i\leq R<S_{i+1}$ and $T_j\leq\tau<T_{j+1}$ for some $i,j$. The new system has sizes $R$ (and $\tau$) with distribution satisfying \eqref{eq: growth1} (and \eqref{eq: temporal dist}, see below), for some parameters $(\alpha,\beta)$ and two normalizing constants ${c_1,c_2>0}$. The parameters $(\alpha,\beta)$ of the new system are typically smaller than $(\alpha_o,\beta_o)$ of the original system, but they can be chosen to be in region V for appropriate $(S_i), (T_j)$. For details, see the { discussion} of Theorem \ref{thm: stability in V} in subsection \ref{subsec: stability in region V}. 
		\end{enumerate}
\end{remark}
	
Systems in regions IV and V behave differently to systems in the region I due to different tail behaviors of $R$. A major difference is that the spatial growth of { admissible paths } can be arbitrarily fast in regions IV \ledd{and} V and the spatial growth introduces different time scales. To { illustrate} this, we consider a system in dimension $d=1$ belonging to region $\{0<\alpha<1\}$, and the spatial size $R$ has distribution of the form 
\begin{align}
		P(R = S_i) & = c_1 S_i^{-(d+\alpha)}, \quad \text{ and }\quad  R \in \{S_i\}_{i\geq 1}, \label{eq: growth1}
\end{align} 
for some increasing sequence $(S_i) $ { increasing at least geometrically fast} and a normalizing constant $c_1$. 
In analyzing $W(t,\mathbf{0})$, it is equivalent to analyze {$\tilde{W}(t,\mathbf{0})$}, which we now do.  In dimension $d=1$, we have the advantage of defining the left-most and right-most points, $L_t$ and $R_t$, reached by admissible paths with initial point $(0,0)$ by time $t$, 
	\[ L_t = \inf_{\substack{\gamma(0)=\mathbf{0},\\ \text{$\gamma$ admissible}}} \gamma(t), \quad R_t = \sup_{\substack{\gamma(0)=\mathbf{0},\\ \text{$\gamma$ admissible}}} \gamma(t).   \]
It is not hard to see that $D(t) = R_t - L_t $ can be compared to two (non independent) compound Poisson processes, 
\begin{equation}\label{eq:stochastic bounds} 
\sum_i \frac{1}{2}S_i\cdot v_i(t)  \leq D(t) \leq \sum_i 2 S_i\cdot V_i(t), \end{equation}
where for $i$ fixed $v_i $ and $V_i$ are not independent but \ledd{$(v_i (t),V_i(t))_{ i\geq 1}$ are independent as $i$ varies, and for each $i$,
$v_i(t)$ and $V_i(t)$ are Poisson variables with rates $\left(\frac{c}{2} S_i^{-\alpha
	}t\right)_i$ and $\left(2c S_i^{-\alpha
	}t\right)_i$}. The lower bound indicates that $D(t)$ grows super-linearly,
\[ \lim_{t\to \infty} \frac{D(t)}{t} \geq \lim_{t\to \infty}\frac{v_i(t)}{2t}S_i = \frac{1}{2}S_i^{1-\alpha},  \] while the upper bound indicates $D(s)$ has increment of sizes at most $2S_i$ at time scales $S_i^{\alpha 
}.$ The different speeds $S_i^{1-\alpha}$ in different time scales $S_i^{\alpha}$ indicate that the {``domain of influence"} $\mathcal{R}_t$ for $\tilde{W}(t,\edtt{\mathbf{0}})$ is growing at different speeds on different time scales too, and this leads to nontrivial growth behaviors of $\tilde{W}(t,\edtt{\mathbf{0}})$. We {use} one example in the proof of Theorem \ref{thm: unstable models in II, III, IV}, where we construct unstable models with $\limsup_{t\to\infty}\frac{W(t,\edtt{\mathbf{0}})}{t} =\infty$ almost surely. In these models, we  find jobs of large temporal sizes with high probability in {time-space} boxes with increasing spatial and temporal sizes. In particular, the ratio of the spatial sizes and the temporal size of these {time-space} boxes diverges as the temporal size grows.

\subsection{Overview of the Proofs and Outline}
The proof of Theorem \ref{thm: unstable models in II, III, IV} is similar to the proof of Theorem 2 in \cite{18FKM}. We {look} into {time-space} boxes of increasing sizes, and consider the probability of obtaining jobs of large temporal sizes in these boxes \ledd{connected} via some admissible path. We can concatenate disjoint boxes so that the numbers of jobs inside are independent and apply 
\edtf{the} \edtt{Law of Large Numbers for independent random variables} to estimate the probability of getting an admissible path with a high score. The detailed proof of Theorem  \ref{thm: unstable models in II, III, IV} is presented in subsection \ref{sec: proof of unstable models}.


The proof of Theorem  \ref{thm: stability in V} relies on two reductions.
In section 2  we reduce a generic system to a new system by enlarging job sizes to some discrete values as suggested in \ledd{Remark} \ref{remark: estimates for further stable model}. The new system {may} have smaller parameters $(\alpha,\beta)$ though still being in region $V$, and the workload of the new system {dominates} the workload of the original system stochastically. We {choose} the job sizes via two sequences of specific forms, see \eqref{eq: concrete choice of S, T,0}. These values allows us get some conditions, see \eqref{eq: well-separated}--\eqref{eq: useful ones} in section \ref{sec:growth}, under which we {prove} Theorem \ref{thm: existence model in IV}. These conditions are not necessary, but they enable the proof of Theorem \ref{thm: existence model in IV}. 
In section 3, we further reduce the proof Theorem \ref{thm: stability in V} 
to two technical propositions. The principle task essentially reduces to showing Proposition \ref{prop: precise estimate}.

The programme to show Proposition \ref{prop: precise estimate} consists of two steps corresponding to sections 4 and 5.  The objective is to show that for fixed temporal size $T_j$, with large probability as $t$ tends to infinity, all admissible paths \ledd{with a fixed end point} of length $t$ intersect at most $\frac{t}{T_j^{1+ \delta'}}$ such jobs for $\delta'>0$ not depending on $j$.  In Section 4 we make a first step.  We choose an ``appropriate" spatial size $S_{i_0}$
corresponding to $j$ and show through various percolation arguments that with high probability all
admissible paths starting at a given place which do not ``use" job sizes $S_i$ for $i>i_0$ have the desired bounds.  We in fact show that the bounds hold outside exponentially small probabilities.  It is here that the 
condition $\alpha \beta >d$ is used.  In section 5 we use an induction argument on $i$ to show that given an appropriate bound for admissible paths \ledd{that} do not ``use" job sizes $S_k$ for $k>i$, we can find an appropriate bound for admissible paths \ledd{that} do not ``use" job sizes $S_k$ for $k>i+1$.  The price to be paid is that the rate of the exponential bound for the bad events decreases as we pass from $i$ to $i+1$.   Given that we have chosen  the $S_i$ to be \ledd{increasing} superexponentially, this does not pose a problem.
{The paper is concluded in section 6 by putting together the results of sections 4 and 5 to obtain
Proposition \ref{prop: precise estimate}}.

\section{Behaviors around the Critical Curve $\alpha\beta=d$}\label{sec: stability in region V}
In this section, we  first prove Theorem  \ref{thm: unstable models in II, III, IV}, which includes unstable cases in region II,III and IV. We {understand } \edtf{the importance of } the curve $L= \{\alpha\beta = d\}$ from the construction of an unstable model in region IV, see \eqref{eq: critical rates} below. We also look into models with parameters $(\alpha,\beta)$ above the critical curve, $\{\alpha\beta > d\}$. We then show that the proof of Theorem \ref{thm: stability in V} need only be given for $\tilde P$ which are discrete with possible values (for $\tau$ and $R$) well separated.

\subsection{Unstable Models in Regions II, III and Constructing Unstable Models in Region IV}\label{sec: proof of unstable models}
The proof of Theorem \ref{thm: unstable models in II, III, IV} is similar to that in section 5 of \cite{18FKM}. \edtt{It is an application of the graphical construction and 
\edtf{the} Law of Large Numbers for independent random variables.} \edt{We investigate any generic model in regions II and III, $\{\beta< d\}$, and also construct an unstable model in region IV$=\{\alpha\beta>d,0\leq \alpha<1\}$.} 

\begin{proof}(Theorem \ref{thm: unstable models in II, III, IV}) 
	\begin{enumerate}
		\item \rcomm{(Region II, III)} 
{Let the arrival rate be } $\lambda > 0$.
By the assumption that the system is nontrivial, we can use a large deviation estimate for a continuous time random walk, and get that there exists a strictly positive $c_0= c_0( \lambda )$  and $n_0 < \infty $ such that  {$\forall n \geq n_0$}
		\[  \inf_ {x\in [-c_0n, c_0 n]^d}   P\left(  \left\{\text{exists an admissible path  $\gamma$ with $\gamma(n) = \mathbf{0}$ and $\gamma(0) =x$} \right\} \right) > 1- e^{-c_0 n} .  \] 

We choose strictly positive $\epsilon < \frac{d - \beta}{3}$.  By the definition of $\beta$ we see that $E(\tau^{1+\beta + \epsilon}) = \ \infty $ so there exists a sequence $n_i$ increasing to infinity such that  
$n_i $ increasing to infinity with
\[
P(\tau \geq 3n_i) \ \geq \ \frac{1}{n_i^{1+\beta + 2 \epsilon}}.
\]
We may assume that $n_i > n_0 \ \forall i \geq 0$.

For any $i \geq 1$, the event that $W(2n_i, \mathbf{0}) \leq n_i$
is contained in the union of two events
\begin{itemize}
\item
$\{ \nexists (t,x) \in  [0,n_i] \times  [-c_0n_i, c_0n_i]^d$ so that a job arrives at $(t,x)$ with $\tau \geq 3n_i \}$.
\item
$\{ \nexists$ an admissible path $\gamma:[n_i, 2 n_i]$ with $\gamma (n_i) = x$ and $\gamma (2n_i) = \mathbf{0}\}$
\end{itemize}

But the probability of the first event is bounded by $e^{- \lambda \frac{(2c_0n_i+1)^d n_i}{n_i^{1+\beta + 2 \epsilon}}}$ which tends to zero as $n_i$ tends to infinity, while the second {conditioned on the existence of an $x$ for the first chosen by nonrandom ordering of sites} has probability bounded by
$e^{-c_0 n_i} $ by our choice of $c_0$.  Thus the distributions {$\{W( 2n_i, \mathbf{0})\}_{i \geq 0}$} are not tight.

	\item (Regions outside \ledd{$\{\alpha \geq 0 ,\beta >0\}$})
{The first case deals with \ledd{$\beta\leq 0 $} so we need only treat $\alpha < 0 $.}
We suppose ${e_1}>0$ fixed so that $P(\tau > {e_1}) >{e_1}.$  We fix $M \geq 4$ and as in the previous case, we fix $\epsilon > 0 $ and less than $\frac{- \alpha }{3}$.  Again we have
the existence of 
$n_i $ increasing to infinity with
\[
P(R\geq 3n_i) \ \geq \ \frac{1}{n_i^{d-\epsilon}}.
\]
For every $i$, the event $\{  W( 1,\mathbf{0})< M{e_1} -1\}$ is contained in the union of
\begin{itemize}
\item
$\{ $ for some $1 \leq k \leq M \ \nexists (x,t) \in [-n_i, n_i]^d \times [1-\frac{k-1}{M}, 1-\frac{2k-1}{2M}]$ so that a job arrives at $(x,t)$ with $R \geq 3n_i \}$.
\item
$\{ \exists 1 \leq k \leq M $ so that  $\nexists \ (x,t) \in [-n_i, n_i]^d \times [1-\frac{2k-1}{2M}, 1-\frac{k}{M}]$ so that  a job arrives at $(x,t)$ with $\tau > {e_1} \}$.
\end{itemize}

So, as above,  $P(W( 1 ,\mathbf{0})< M{e_1} -1)$ is bounded by $Me^{-(2n_i+1)^d \frac{\lambda}{2Mn_i^{d-\epsilon}}} +
Me^{-(2n_i+1)^d \lambda {e_1}/2M} $.  We see the lack of tightness from the arbitrariness of $i$.

		\item (Region IV) For any point $(\alpha,\beta)$ in Region IV$=\{\alpha \beta <d , 0<\alpha <1, \beta >d\}$, {we construct an unstable system having $(\alpha, \beta)$ as its moment parameters.}  Note that unlike the previous two cases, we are not claiming that any distribution {with parameters } in this region is necessarily unstable.

For this system, jobs have discrete spatial and temporal sizes. In particular, for all positive integer $i$, we define $R_i := 2^{(1+\beta)i}$ and $\tau_i := 2^{(d+\alpha) i}$. The jobs have only sizes {$(\tau_i,R_i)$ for some $i\in \mathbb{N}$}, and with probability  
\begin{equation}
		\tilde P (\{(\tau_i, r_i)\})  =  P\left( (\tau, R)  =(  \tau_i ,R_i) \right) = c \cdot 2^{-(d+\alpha)(1+\beta)i} , 
\end{equation}
		where $c$ is a normalizing constant $c = \left(\sum_{i= 1}^{\infty} 2^{-(d+\alpha)(1+\beta)i} \right)^{-1}$. It is easy to check this system is in Region IV, and we only need to show that this system is unstable. The argument is similar to the first case. 
		We consider \ledd{$W( \frac{\tau_i}{4} ,\mathbf{0})$}. The number, $N_i$, of $(\tau_i,R_i)$
		jobs arriving at a space time point in $ [-\frac{R_i}{4},\frac{R_i}{4}]^d\times[0 ,\frac{\tau_i}{4}] $
		is a Poisson random variable with parameter
		\begin{equation}\label{eq: critical rates}
			c 2^{-(d+1)}R_i^d \tau_i 2^{-(d+\alpha)(1+\beta)i} = c 2^{-(d+1)} 2^{i(d-\alpha\beta)}.   \end{equation}
		Since $d>\alpha\beta$, this parameter tends to infinity as $i$ becomes large.
		By considering the admissible path $\gamma (s) \equiv \mathbf{0}$ for $0 \leq s \leq \frac{\tau_i}{4}$ 
		we have that $W( \frac{\tau_i}{4} ,\mathbf{0})$ is stochastically greater than $\tau_i (N_i- \frac{1}{4} )$.
		This suffices to show the lack of tightness.
\end{enumerate}
\end{proof}


\begin{remark}We have the following two remarks on workload $W(t,x)$, the proofs are extensions of arguments in the above proof, and we {omit} their details.
		\begin{enumerate} 
			\item In the proof of first case, we use the fact that the parameter $\beta$ is \textit{strictly} smaller than $d$, which implies the $d+1-\frac{\epsilon}{2}$-th moment of $\tau$ is infinite. In fact, with an extra amount of work, we can show that the model is unstable with the condition that $\tau$ has an infinite $d+1$-th moment, so that there are also unstable models with parameters $(\alpha,\beta)$ on the boundary of regions II,III, (i.e. $\beta=d$). Similarly, we can also \edtf{show that} the model is unstable with the condition that $R$ has an infinite $d$-th moment, when $\alpha =0$.
			\item  When \edtfff{$\alpha <0$}, the workload $W(t,\mathbf{0})$ is infinite almost surely, for any $t>0$. 
		\end{enumerate}
\end{remark}

\subsection{Reduction of Theorem \ref{thm: stability in V}} \label{subsec: stability in region V}
The purpose of this subsection is to reduce the proof of Theorem \ref{thm: stability in V} to proving corresponding results { for } a ``discrete" comparison process.  This discretization, as mentioned, permits the use of inductive arguments.  The reduced result, Theorem \ref{thm: existence model in IV} below,  is subsequently further reduced in the next section. That is, our objective is to reduce proving Theorem  \ref{thm: stability in V} to proving Theorem \ref{thm: existence model in IV}  below.

\ledd{Recall that $( \tau', R') \sim \tilde P$ means that $\tilde P$ is the law of $(\tau', R')$.} We say that law $\tilde{ \tilde P} $ {\it dominates}
$\tilde P $ if 
$$
\exists \tau' \leq \tau \mbox{ and } R' \leq R \mbox{ so that }
( \tau', R') \sim \tilde P \mbox{ and } ( \tau, R) \sim \tilde {\tilde P}.
$$
The main tool which follows from Remark \ref{remark: estimates for further stable model} is 
\begin{proposition} \label{basicdom}
If $\tilde{ \tilde {P}} $ dominates $\tilde P$ and $\tilde {\tilde {P}}$ is stable, then so is $\tilde P$.
\end{proposition}

We now give a family of distributions $\tilde P^{\theta,A}$ for $\theta, A >0$ which always dominate a given $\tilde P $.
Given $\theta $,$A$ and $\tilde P \in V$ we choose the sequences {$i\geq 1$}  \begin{equation} \label{eq: concrete choice of S, T,0}
	S_i = 2^{(1+\delta)A(1+\theta)^{2i}}, \quad T_j =2^{\alpha A (1+\theta)^{2i+1}}.
	\end{equation} 
We also choose $T_0=0$ for convention.  The value $\delta > 0$ is specified as a function of $\theta $ and $\tilde P $ below.  Given these sequences $\tilde P^{\theta,A}$ {denotes} the law of $(\tau', R')$ obtained from $(\tau,R) \sim \tilde P$ by specifying as in 2) of Remark \ref{remark: estimates for further stable model}.
Thus from Proposition \ref{basicdom}, we have
that 
 Theorem  \ref{thm: stability in V} is implied by the following:

\begin{proposition} \label{basicdom2}
For $\tilde P $ {in} region $V$, there exists $\theta $ sufficiently small and such that there exists $A$ sufficiently large so that 
$\tilde P ^{\theta, A}$ is stable.
\end{proposition}

It remains to discuss the value $\delta $.  We {also} introduce a fourth value $\kappa > 0$ which {plays} a role
in our analysis.
Consider a generic $\tilde P$ in the region V. \edtt{Notice that region V $=\{\alpha <1, \alpha \beta>d  \}$ is an open set, we can find parameters $\alpha_0,\beta_0$ with $\alpha_o\cdot \beta_o >d \geq 1$, $0<\alpha_o < 1 $, such that the job sizes $(\tau_o, R_o)$ satisfy moment conditions}
	\begin{equation} \label{eq: finite moments}
	\mathbb{E}\left[ R_o^{d+\alpha_o} + \tau_o^{1+\beta_o} \right]<\infty.
	\end{equation}\edtss{Although the critical moments $(\alpha_c,\beta_c)$ of{ $\tilde P $} may not have strict inequality as \eqref{eq: finite moments}, from the definitions \eqref{eq: critical moment}, we can always approximate $(\alpha_c, \beta_c)$ by $(\alpha_0,\beta_0)$.}
	
For $\theta>0$, we define $\alpha_1,\beta_1$ parametrized by $\theta$
	\begin{equation}\label{eq: new a,b}
	\alpha_1 = \frac{d+\alpha_o}{(1+\theta)^2}-d,\text{ and } \beta_1 = \frac{1+\beta_o}{(1+\theta)^2}-1.
	\end{equation}
	Since $\lim_{\theta \downarrow 0}  \alpha_1 \beta_1 = \alpha_o \beta_o >d$, we can find a $\theta >0$, such that the pair $(\alpha_1, \beta_1)$ is a point in region $V$, 
	\begin{equation}  \label{eq: new a,b point}
	0<\alpha_1 < \alpha_0 <1, \text{ and } \alpha_1\beta_1 >d.    
	\end{equation} \edtss{Then, we can define $\delta$ by}
	\begin{equation} \label{eq:new condtion}
		\delta = {  \min\left(\edts{\frac{\alpha_1\beta_1 -d}{d+\alpha_1}} , \frac{\theta}{3}\right)}>0,
	\end{equation} and take a {$\kappa <  \min\{ \frac{1}{8}, \frac{\delta}{4(1+\delta)} \}$}. \edts{It follows immediately that $\theta> 3\delta> \frac{3\kappa} {1-\kappa} $, and therefore, we have}
	\begin{equation}  \label{eq: theta, kappa relation}
	\theta > \frac{\edts{3}\kappa }{1-\kappa}.
	\end{equation}
%

By the Markov Inequality and \eqref{eq: finite moments},  we have for each job in the new system (i.e,
$(\tau,R)$ {chosen from law $\tilde P ^{\theta, A}$)} for $i\geq \edtss{1}, j\geq 1$
	\begin{align*}     
	P(R = S_{i+1} ) \leq P(R_o \geq S_{i} ) \leq  \mathbb{E}[R_o^{d+\alpha_0}] S_{i}^{-(d+\alpha_0)},
	\notag\\
	P(\tau = T_{j+1} ) \leq P(\tau_o \geq T_{j} ) \leq  \mathbb{E}[\tau_o^{1+\beta_0}] T_{j}^{-(1+\beta_0)},
	\end{align*}
	which, from the choices of $(\alpha_1, \beta_1)$ and $S_{i+1} = S_i^{(1+\theta)^2},T_{j+1} = T_j^{(1+\theta)^2} $, are bounded by
	\begin{align*}     
	\mathbb{E}[R_o^{d+\alpha_0}] S_{i}^{-(d+\alpha_0)} =  \mathbb{E}[R_o^{d+\alpha_0}] S_{i+1}^{-\frac{d+\alpha_0}{(1+\theta)^2}} \leq c_1 S_{i+1}^{-(d+\alpha_1)},
	\notag\\
	\mathbb{E}[\tau_o^{1+\beta_0}] T_{j}^{-(1+\beta_0)} =  \mathbb{E}[\tau_o^{1+\beta_0}]  T_{j+1}^{-\frac{1+\beta_0}{(1+\theta)^2}}  \leq  c_2 T_{\edtss{j+1}}^{-(1+\beta_1)},   
	\end{align*} for some $c_1, c_2>0$. 

Therefore, the distribution of $(\tau,R)$ in the new system (i.e. $\tilde {P^{\theta,A}}$)satisfies, for $i\geq 1, j\geq 1$
	\begin{align}     \label{eq: spatial dist}
		P(R = S_{i} ) \leq c_1 S_{i}^{-(d+\alpha_1)},
		\\
		P(\tau = T_{j} )  \leq   c_2 T_{j}^{-(1+\beta_1)},  \label{eq: temporal dist}
\end{align} for some $c_1, c_2>0$. \ledd{Note while $c_1, c_2$ depends on $A$, exponents $\alpha_1, \beta_1$ do not.} 

Our final reduction is just to suppose that in inequalities \eqref{eq: spatial dist} and  \eqref{eq: temporal dist} we \ledd{may take} equality.  Given law $\tilde P^{\theta,A}$ satisfying the above inequalties, it is immediate that by adding mass at points $(0,S_{{j}}) $ and $(T_j,0)$, we can construct a measure (of mass at least $1$) $\mu $  so that we have for $\mu $ equality in  \eqref{eq: spatial dist} and  \eqref{eq: temporal dist}.   Let $\tilde {\tilde {P}}^{\theta,A} $
be $\mu $ normalized to be a probability.
We now simply note that from basic Poisson process properties that for each $\lambda > 0$ the system $\tilde {\tilde {P}}^{\theta,A}$ at rate $|\mu| \lambda $ dominates system $\tilde P$ at rate $\lambda$ with obvious meaning of dominates in this context.  Thus we have that to show \ledd{Proposition \ref{basicdom2}, and therefore} Theorem  \ref{thm: stability in V} it suffices to show

\begin{theorem}
 \label{thm: existence model in IV}
Given $\tilde P $ in $V$, let   $\theta $ >0 be such that for { all} $A>0$ the law  $\tilde {\tilde {P}}^{\theta,A}$is in region $V$ and so that for $(\tau,R) \sim \tilde {\tilde {P}}^{\theta,A}$,
$P(R = S_{i} ) =c_1 S_{i}^{-(d+\alpha_1)},$ and $P(\tau = T_{j} ) =   c_2 T_{j}^{-(1+\beta_1)},$
for some $c_1$ and $c_2$.
Then for $A$ sufficiently large {$\tilde {\tilde {P}}^{\theta,A}$} is stable.
\end{theorem}

\section{A Stable Discretized Model, and Its Growth Estimates}\label{sec:growth}
{In this section, we focus on the new discrete system introduced in Theorem \ref{thm: existence model in IV}. Our main goal of this section is to reduce the proof of Theorem \ref{thm: existence model in IV} to a technical result, Proposition \ref{prop: precise estimate}.}

We start with some conditions satisfied by the job sizes for $\tilde{\tilde{P}}^{\theta,A},$ when $A$ is sufficiently large. The specific form \eqref{eq: concrete choice of S, T,0} of job sizes is convenient for presenting the estimates and proofs in this section. In principle, there are many other choices for $(S_i ,T_j)$, as long as the system belongs to region V. The form \eqref{eq: concrete choice of S, T,0} of $(S_i,T_j)$ and a large enough $A$ imply conditions  \eqref{eq: well-separated} -- \eqref{eq: useful ones} below, and they are in the assumptions of two techinical propositions, Proposition \ref{prop: formal estimate} and Proposition \ref{prop: precise estimate}. Again, these conditions are not necessary for the stability, but they aid our proofs. We will discuss their roles after stating them.
  
  { For large $A$, the sequences} $(S_i), (T_j)$ satisfy the following conditions:
\begin{enumerate}
	 \item 	The choice of exponents implies that for $i>1$ \begin{equation}\label{eq: convenient exponents}
	 	S_i^{\alpha(1+\theta)} =T_i^{1+\delta}. 
	 \end{equation}
		\item Scales $ \Delta t_{2i-1}=S_i^\alpha\geq 1,\Delta t_{2i}= T_i^{1+\delta}\geq 1$ are well separated: \edts{for all $i\geq 1$} 
		\begin{equation} \label{eq: well-separated}
		\rcomm{\varrho}^{i} {\Delta t_i}^{1+\kappa} < \rcomm{\varrho}^{-(i+1)} {\Delta t_{i+1}}^{1-\kappa}, 
		\end{equation} \edtss{where $ \varrho = 10 +  \frac{d}{\alpha\kappa}$. 
		In fact, we can choose for each $i$,}
		\begin{equation}\label{eq: choices of l_i, r_i}
		l_i = \varrho^{-i}\Delta t_i^{-\kappa}, r_i = \rcomm{\varrho}^{i}\Delta t_i^{\kappa},
		\end{equation}\ledd{so that we have an increasing sequence of times in  $\mathbb{R}_+$} 	
		\begin{equation}\label{eq: partition of real line}
				0<  l_1  \Delta t_{1} < r_1\Delta t_{1} < l_2 \Delta t_{2} < r_2 \Delta t_{2} <\ldots .  
				\end{equation} To avoid confusion, we point out that the symbol $\Delta t_i$ does not mean the increment of time $t_{i+1}-t_i$, but a scale corresponding to $i$.
			
	We show \eqref{eq: well-separated} when $i$ is odd, $i=2j-1$. Indeed, there is an $A>0$ such that for all $j\geq 1$, 
		\[(\Delta t_{2j-1})^{\kappa}= 2^{A\kappa\alpha(1+\delta)(1+\theta)^{2j}}>2^{2 A\kappa\alpha\theta(1+\delta)j}> \rho^{4j-1}.\] Then by \eqref{eq: theta, kappa relation}, we have that $1+\theta> \frac{1+2\kappa}{1-\kappa}$. Therefore, from \eqref{eq: convenient exponents},
\[ \rcomm{\varrho}^{4j-1} {\Delta t_{2j-1}}^{1+\kappa} < {\Delta t_{2j-1}}^{1+2\kappa} < {\Delta t_{2j-1}}^{(1+\theta)(1-\kappa)}={\Delta t_{2j}}^{(1-\kappa)} ,  \] which implies \eqref{eq: well-separated}.	
Similarly, we can show the case when $i$ is even, and we omit the proof.
		
		In fact, \eqref{eq: well-separated} also implies the following two conditions (when $A$ is large), see \eqref{eq: no influence from upper} \eqref{eq:summability} below and a short proof of them at the end of \ledd{point 4}.
		\item Higher scale has little influence over lower scales: \edts{for all $i\geq 1$} 
		\begin{equation} \label{eq: no influence from upper}\edts{ 
		\sum_{j>0}\varrho^{\edtt{2j}} \Delta {t_{2(i+j)-1}}^{-(1-\kappa)} <  \Delta{t_{2i}}^{-1}}.
		\end{equation}

		\item Summability: \edts{for each $C>0$,}
		\begin{equation}\label{eq:summability}
		\edts{\sum_{i>1}\exp(-C{\Delta t_{i}}^{\kappa} ) \ln (\Delta t_{i})< \infty}. 	  
		\end{equation}
 \eqref{eq: no influence from upper} and \eqref{eq:summability} { follow }from \eqref{eq: well-separated}. In fact, \eqref{eq: well-separated} implies that for all $i\geq 1$, \[  \Delta t_{i}^{-1} > \rho^{2i+1} \Delta t_{i+1}^{-(1-\kappa)}>\rho^{3} \Delta t_{i+1}^{-1}.  \] Therefore, we have that for any $j>0$,
		\[  \rho^{-j}\Delta t_{2i}^{-1} > \rho^{2j} \Delta t_{2(i+j)-1}^{-(1-\kappa)}. \] Summing over $j$, we can get \eqref{eq: no influence from upper}.
	
		Similarly, From \eqref{eq: well-separated}, we get that $\Delta t_i \geq \rho^{3i}.$ For any $C>0$, there is an $M>0$ such that the function $\exp(-C x^{\kappa} ) \ln (x)$ is decreasing in $x$ on $[M,\infty)$. We can get \eqref{eq:summability} from the inequality $\sum_{i>1}\exp(-C{\rho^{3i\kappa}} ) \ln (\rho^{3i})< \infty$.
		
		\item Leading order: \edts{for all $i\geq 1$} 
		\begin{equation} \label{eq: leading order condition}
		\edts{\left(S_i^{1-\alpha}\Delta t_{2i}\right)^{d}\Delta t_{2i} T_i^{-(1+\beta)}<1}
		\end{equation}
		 This is a consequence of $\alpha \beta >d $, and the choice of $\delta$, see \eqref{eq:new condtion}. In view of \eqref{eq: convenient exponents}, we have that $S_i < T_i^{\frac{1+\delta}{\alpha}}$. Therefore, we bound the left hand side of \eqref{eq: leading order condition} (strictly) by
			\begin{equation*}  \label{eq:leading order 0}
				T_i^{\left((\frac{1-\alpha}{\alpha} + 1)\right)d+1)(1+\delta) -(1+\beta)} = T_i^{\frac{d-\alpha\beta + (d+\alpha)\delta}{\alpha}}\leq 1 .  \end{equation*}
	\item		Also, when $A$ is large, we have inequalities for $(S_i)$, for all $k \geq 1$,
			\begin{equation} \label{eq: useful ones} 
				10\sum_{i < k} S_i^{1-\alpha} <S_k^{1-\alpha}, \quad 10\sum_{i>k} S_i^{-\alpha} < S_{k}^{-\alpha}.
			\end{equation} 
	\end{enumerate} 


\edtss{Before going into the major \ledd{bounds} in the proof of Theorem \ref{thm: existence model in IV},} we briefly explain the roles of conditions \eqref{eq: well-separated} -- \eqref{eq: useful ones} in the major steps. Conditions \eqref{eq: well-separated} and \eqref{eq:summability} are used directly in the proof of Theorem \ref{thm: existence model in IV}, see subsection \ref{subsec: proof of main theorem} below. In general, \eqref{eq: well-separated} allows us to look at \edtss{different time intervals corresponding to different time scales} and obtain estimates according to these scales, while \eqref{eq:summability} allows us {to get a finite sum from these estimates, and to apply a Borel-Cantelli type of argument, see Lemma \ref{lm: BC lemma} below.} Conditions \eqref{eq: useful ones} enable us to estimate the probability to connect two {time-space} points, \edtf{which is} used throughout sections \ref{sec: boundary process, modified paths}, \ref{sec: leading order}. In particular, condition \eqref{eq: useful ones} \ledd{has} nothing to do with temporal workload, and it controls spatial growth for admissible paths for $W(t,\edtt{\mathbf{0}})$. Condition \eqref{eq: leading order condition} \edtss{is the cornerstone to control the growth of the maximal numbers of jobs, see \eqref{eq: temporal number} and \eqref{eq: spatial number} below, and it plays a central role in Proposition \ref{lm: leading order}}. This condition implies that when jobs of spatial sizes larger than $S_j$ are not involved, the density of jobs of temporal size $T_j$ along any admissible paths, is at most $T_j^{-(1+\delta)}$, for some $\delta>0$. Condition \eqref{eq: no influence from upper}, on the other hand, ensures that jobs of sizes larger than $S_j$ { do} not affect the leading order $T_j^{-(1+\delta)}$ for the density of jobs of temporal size $T_j$ along any admissible paths, see Proposition \ref{lm: no influence}. It is with \eqref{eq: no influence from upper} that we bound a family of tailored exponential moments, see Corollary \ref{cor: exponential moments} below, and derive from them the probabilistic estimates in Proposition \ref{prop: precise estimate}.

\subsection{Growth Estimates}	\label{sub: growth}
{The job sizes are discrete for $\tilde{\tilde{P}}^{\theta,A}$, so we can estimate the load of an admissible path by contribution from different job sizes.} For any fixed admissible path $\gamma_{0,t}$, we denote by $n_j(\gamma_{0,t})$ the number of jobs of the temporal size $T_j$ that $\gamma_{0,t}$ intersects (in the sense of \eqref{eq: intersection}), and denote by $m_i(\gamma_{o,t})$ the number of jobs of spatial size $S_i$ that $\gamma_{0,t}$ intersects:
\begin{align}
n_j(\gamma_{0,t}) :=& \text{ number of jobs of temporal size } T_j \text{ intersected by }  \gamma_{0,t},    \label{eq: temporal number} \\
m_i(\gamma_{0,t}) :=& \text{ number of jobs of spatial size } S_i \text{ intersected by } \gamma_{0,t}.   \label{eq: spatial number}
\end{align}	
The load of an admissible path can be written as 
{\[U(\gamma_{0,t})= \sum_j n_j(\gamma_{0,t}) T_j. \]}
{Taking the supremum over all admissible paths with the initial point $\gamma_{0,t}(0)=\mathbf{0}$, we get three types of maximal quantities, which are all superadditive in time $t$,
\begin{equation*}
	\sup U(\gamma_{0,t}),	\sup n_j(\gamma_{0,t}), \text{and} \sup m_j(\gamma_{0,t}).
\end{equation*} The technical propositions involve bounding the growth of these quantities.
}

The main ingredient towards Theorem \ref{thm: existence model in IV} is the following proposition on the maximal load up to time $t$, $\sup U(\gamma_{0,t})$. It says that $\sup U(\gamma_{0,t})$ grows at most linearly in time $t$ with a high probability. \rcom{In view of \eqref{eq: partition of real line}, the probabilistic estimate is related to different time scales on different time intervals. On each time interval \edtss{larger than the time scale \edtss{$\Delta t_{2i}$}}, the probability that {$\sup U(\gamma_{0,t})$} grows faster than $c\cdot t$ is exponentially small in the time scale $\Delta t_{2i}$, up to a term which is proportional to the ratio of $t$ and the next time scale $\Delta t_{2(i+1)}$\edtss{; on the time interval close to the time scale $\Delta t_{2i}$, we can modify the event slightly.}}
\begin{proposition}\label{prop: formal estimate} Recall $\kappa < \min\{\frac{1}{8}, \frac{\delta}{4(1+\delta)}\}$. \ledd{Suppose the assumption of Theorem \ref{thm: existence model in IV} hold, and} two sequence of numbers $(S_i)$ $(T_j)$ satisfy conditions \eqref{eq: well-separated}--\eqref{eq: useful ones}, for $\Delta t_{2i-1}=S_i^{\alpha}$, $\Delta t_{2i}=T_i^{1+\delta}$, and $l_i,r_i$ defined by \eqref{eq: choices of l_i, r_i}, for all $i \in \mathbb{N}$. Then, there are positive constants $c$, $C$, $C'$, and $\lambda_d$ such that if $\lambda < \lambda_d$, then for all $i\geq 1$, 
	\begin{equation}\label{eq: formal estimate}
	P\left( \sup U(\gamma_{0,t})> c\cdot t  \right)	< \begin{cases} \exp(-C {\Delta t_{2i}}^{\kappa}) +  C' {\Delta t_{\edtff{{2i+2}}}}^{-1} t &, \text{ if } l_{2i} {\Delta t_{2i}} \leq t < r_{2i} {\Delta t_{2i}} \\
	\exp(-C {\Delta t_{2i}}^{-1}\cdot t ) +  C' {\Delta t_{\edtff{{2i+2}}}}^{-1} t &, \text{ if } r_{2i} {\Delta t_{2i}}\leq t < l_{2i+2} \edtff{{\Delta t_{2i+2}}}
	\end{cases},
	\end{equation}
	where the supremum is taken over all admissible paths with the initial point $\gamma_{0,t}(0)=0$.
\end{proposition}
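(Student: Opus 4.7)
The plan is to reduce Proposition \ref{prop: formal estimate} to the per-scale estimate of Proposition \ref{prop: precise estimate} (the detailed version stated later in this section, which controls $\sup_\gamma n_j(\gamma_{0,t})$ individually for each temporal size class $T_j$), combined with a crude first-moment bound for jobs whose temporal size is ``too large'' to be controlled directly. First I would write, for any admissible path starting at the origin,
\[
W(\gamma_{0,t}) \;=\; \sum_{j\geq 1} T_j\, n_j(\gamma_{0,t}),
\]
choose a summable sequence of positive weights $(c_j)$ with $\sum_j c_j \leq c$ (e.g.\ $c_j = c\,\varrho^{-j}/\sum_k \varrho^{-k}$, matching the geometric decay in \eqref{eq:summability}), and use the union bound
\[
\mathbb{P}\!\left(\sup_\gamma W(\gamma_{0,t}) > c\,t \right) \;\leq\; \sum_{j\geq 1} \mathbb{P}\!\left(\sup_\gamma T_j\, n_j(\gamma_{0,t}) > c_j t\right).
\]
I would then split the $j$-sum at the current dominant scale, namely at $j=i+1$, using that $t$ lies below $\Delta t_{2i+2}=T_{i+1}^{1+\delta}$.

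For the ``moderate'' indices $j \leq i$, Proposition \ref{prop: precise estimate} supplies a bound of the form $\exp(-C_j \Delta t_{2i}^\kappa)$ in the intermediate regime $l_{2i}\Delta t_{2i}\leq t<r_{2i}\Delta t_{2i}$, and of the form $\exp(-C_j \Delta t_{2i}^{-1} t)$ in the linear regime $r_{2i}\Delta t_{2i}\leq t<l_{2i+2}\Delta t_{2i+2}$. Summing these across $j\leq i$ via \eqref{eq:summability} collapses to a single geometric factor and produces the first exponential summand in \eqref{eq: formal estimate}. For the ``large'' indices $j\geq i+1$, I would observe that since $T_{j}\gg t$, a \emph{single} job of temporal size $T_j$ intersecting $\gamma_{0,t}$ would already exceed $c_j t$, so it suffices to bound the probability that any such job has its center in the spatial range $\mathcal{R}_t$ of admissible paths. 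Such a count is Poisson with intensity $\lambda\,|\mathcal{R}_t|\,\mathbb{P}(\tau=T_j)$; using $\mathbb{P}(\tau=T_j)\leq c_2 T_j^{-(1+\beta)}$, the polynomial spatial range estimate from Subsection \ref{subsec: admissible path} (extended to general dimension in Section \ref{sec: boundary process, modified paths}), and the super-exponential growth of $T_j$ from \eqref{eq: concrete choice of S, T,0}, a Markov inequality and summation over $j\geq i+1$ collapse to the additive $C'\Delta t_{2i+2}^{-1}\, t$ term.

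The hard step is Proposition \ref{prop: precise estimate} itself, which is the content of the later sections: it rests on the leading-order density estimate (Proposition \ref{lm: leading order}) for jobs of temporal size $T_j$ \emph{ignoring} jobs of spatial size $\geq S_{j+1}$, combined with the no-influence statement (Lemma \ref{lm: no influence}) that including those larger spatial sizes does not perturb the leading order \eqref{eq: leading order condition}. In the intermediate regime the sub-Gaussian-type exponent $\Delta t_{2i}^\kappa$ will arise from Poisson concentration around the mean on a single block of length $\Delta t_{2i}$, using \eqref{eq: well-separated} to guarantee that no coarser scale has yet been felt; in the linear regime the exponent $\Delta t_{2i}^{-1} t$ comes from a Cram\'er-type concatenation of $\lfloor t/\Delta t_{2i}\rfloor$ essentially independent blocks of that length. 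Verifying compatibility across the two regimes at the transition points $r_{2i}\Delta t_{2i}$ and $l_{2i+2}\Delta t_{2i+2}$ of the partition \eqref{eq: partition of real line}, and confirming that the condition \eqref{eq: no influence from upper} really does kill cross-scale contamination when summing the per-$j$ tails, is the technical heart of the argument and is the step I would expect to be the main obstacle.
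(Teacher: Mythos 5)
Your proposal follows essentially the same route as the paper: Proposition \ref{prop: formal estimate} is deduced directly from \eqref{eq: detail estimate} of Proposition \ref{prop: precise estimate}, whose proof is exactly your split at the current scale (the event $B_i$ plus the events $A_{t,j}$, with $j\le i$ producing the exponential term and $j\ge i+1$ the $C'\,{\Delta t_{2i+2}}^{-1}t$ term), and the paper's choice $c=c'\sum_j T_j^{-(\delta-2\kappa)}$ plays the role of your weights $(c_j)$. One cosmetic remark: your justification ``$T_j\gg t$'' for $j\ge i+1$ can fail near the top of the window (e.g.\ for $j=i+1$ one can have $t$ up to $l_{2i+2}T_{i+1}^{1+\delta}>T_{i+1}$ since $(1+\delta)(1-\kappa)>1$), but it is also unnecessary, because $\{T_j n_j(\gamma_{0,t})>c_j t\}\subseteq\{n_j(\gamma_{0,t})\ge 1\}$ in any case, so your reduction to the probability that any such job is met still stands.
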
 
Before proceeding, we remark that we can get an upper bound for $C'$, but due to our choices of $r_i$, $l_i$ and time scales $\Delta t_i$, \eqref{eq: formal estimate} converges to $0$ fast enough regardless of the values of $C'$. In fact, we use \eqref{eq: formal estimate} to prove a Borel-Cantelli type of lemma, see Lemma \ref{lm: BC lemma} \edtf{below}.

Notice that the time scales have been standardized{, in the sense that} the scale ${\Delta t_{2i} = T_i^{1+\delta}}$ is a typical time of seeing one arrival of  $T_i$ along the maximal path. A more precise statement is Proposition \ref{prop: precise estimate}. Let $A_{t,j}(c')$ be the event that the maximal number for jobs of temporal size $T_j$ doesn't exceed $c' T_j^{-(1+\delta)}\cdot t$ by time $t$ for any admissible path on $[0,t]$ with the initial point \ledd{$(0,\mathbf{0})$},
	\begin{equation} \label{eq: overgrowth of temporal jobs}
	A_{t,j}(c'):= \left\{ \sup_{\substack{\gamma_{0,t}(0)=\edtf{\mathbf{0}},\\ \text{$\gamma_{0,t}$ admissible}}} n_j(\gamma_{0,t}) \leq c'T_j^{-(1+\delta)}t \right\},
	\end{equation}  
	and let $B_{i}(t)$ be the event that no jobs of spatial size $S_j$, \edtfff{$j\geq i+2$} occur along any admissible path on $[0,t]$ with the initial point \ledd{$(0,\mathbf{0})$},
	\begin{equation} \label{eq: overgrowth of spatial jobs}
	B_{i}(t):= \left\{ \sup_{\substack{\gamma_{0,t}(0)=\edtt{\mathbf{0}},\\ \text{$\gamma_{0,t}$ admissible}}} m_j(\gamma_{0,t}) =0, \text{ for all $j\geq i+2$ }\right\}.
	\end{equation}  
	
	For every $t$ in a fixed time interval $ [l_{2i} \Delta t_{2i},l_{2i+2} \Delta t_{2i+2}]$, we consider events described by cases in Proposition \ref{prop: precise estimate} below, and estimate their probability, which are in general small. We then upper bound the probability of the event that $A_{t,j}(c')$ occurs for all $j$. Small modification is required when $t$ is close to  to $\Delta t_{2i}$. In this case, the ratio ${\Delta t_{2i}}^{-1}t$ is like a constant, and we consider the event $A^c_{t,i}(c'\Delta t_{2i}^{2\kappa})$, which also has a small probability when $B_i(t)$ occurs, see \eqref{eq: unusual temporal job} below. In general, we can chose $c =   c' \sum_{j=1}^{\infty} T_j^{-( \delta-2\kappa )}<\infty$, and prove Proposition \ref{prop: formal estimate} by \eqref{eq: detail estimate} in Proposition \ref{prop: precise estimate}. So we omit the proof of Proposition \ref{prop: formal estimate}. 

\begin{proposition}\label{prop: precise estimate} 
	Under the assumption of Proposition \ref{prop: formal estimate}, there are positive constants $c'$, $C$, $C'$ and $\edts{\lambda_d<1}$, such that, if $\lambda<\lambda_d$, for any $t\in [l_{2i} \Delta t_{2i},l_{2i+2} \Delta t_{2i+2})$ we have the following estimates on maximal numbers of jobs:
	\begin{enumerate}
		\item $B_i(t)$ occurs with a high probability{:}
		\begin{equation}\label{eq: no large spatial job}
			P\left( B_i(t) ^c\right)  \leq   C' {\Delta t_{2i+2}}^{-1} t  \leq C' l_{2i+2} ;
		\end{equation}
		
		\item if $j>i$,
		\begin{equation} \label{eq: no large temporal job}
		P\left( A^c_{t,j}(0) \text{ and } B_i(t)\right) < C' {\Delta t_{2j}}^{-1} t  ;
		\end{equation} 
		
		\item if $j\leq i$, 
		\begin{equation} \label{eq: usual temporal job}
		P\left( A^c_{t,j}(c') \text{ and } B_i(t)\right) < \exp\left(-C \kappa^{i-j+2} {\Delta t_{2j}}^{-1} t\right)  ;
		\end{equation} 
		
		\item in the case when $i=j$, and when $t< r_{2i}\Delta t_{2i}$,
		\begin{equation} \label{eq: unusual temporal job}
		P\left( \edtfff{A^c_{t,i}}(c'{\Delta t_{2i}}^{2\kappa})  \text{ and } B_i(t)\right) < \exp(-C  \Delta t_{2i}^{\kappa})  ;
		\end{equation} 
	\end{enumerate}
Furthermore, we {bound} the event that $A_{t,j}(c'{\Delta t_{2j}}^{2\kappa})$ occurs for all $j$,
\begin{align} \label{eq: detail estimate}
1- P&\left( A_{t,j}(c'{\Delta t_{2j}}^{2\kappa})  \text{ for all $j$ } \right)
\notag
\\
 <& \begin{cases} \exp(-C {\Delta t_{2i}}^{\kappa}) +  C' {\Delta t_{\edtff{{2i+2}}}}^{-1}\cdot  t &, \text{ if } l_i {\Delta t_{2i}} \leq t < r_i {\Delta t_{2i}} \\
\exp(-C {\Delta t_{2i}}^{-1}\cdot t ) +  C' {\Delta t_{\edtff{{2i+2}}}}^{-1} \cdot t &, \text{ if } r_i {\Delta t_{2i}}\leq t < l_{i+1} {\Delta t_{\edtff{{2i+2}}}}
\end{cases}
\end{align} 
	\end{proposition}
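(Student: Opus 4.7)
The plan is to prove the four items separately and then assemble \eqref{eq: detail estimate} by a union bound over $j$, exploiting the summability condition \eqref{eq:summability}. The two main analytic inputs will be the spatial connectivity estimate of Proposition \ref{lm: d>1 case} together with Corollary \ref{cor: exponential estimates}, which yields exponential decay in $|x|$ for the probability that $(\mathbf{0},0)$ is linked to $(x,t)$ by admissible paths using only spatial scales at most $S_n$, and the leading-order density bound of Proposition \ref{lm: leading order} for the number of $T_j$-jobs along any admissible path, together with its stability under the perturbation supplied by Lemma \ref{lm: no influence}.

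I would first treat the spatial estimate \eqref{eq: no large spatial job}. On $B_i^c$ some admissible path must meet a job of spatial size $\ge S_{i+2}$; Corollary \ref{cor: exponential estimates} confines the domain of influence $\mathcal{R}_t$, up to an exponentially unlikely event, to a spatial box of diameter at most $C\sum_{k\leq i+1}S_k\leq 2C S_{i+1}$ by \eqref{eq: useful ones}, so a first-moment computation bounds $\mathbb{P}(B_i^c)$ by $C\lambda t\cdot S_{i+1}^d\cdot S_{i+2}^{-(d+\alpha)}$, which by \eqref{eq: leading order condition} and the definition of $\Delta t_{2i+2}$ is of order $C'\Delta t_{2i+2}^{-1}t$. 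An analogous first-moment argument on the event $B_i$ yields \eqref{eq: no large temporal job}: restricting the spatial range to $2S_{i+1}$, the Poisson mean number of $T_j$-jobs a path can intersect is at most $C\lambda t\cdot S_{i+1}^d\cdot T_j^{-(1+\beta)}$, which \eqref{eq: leading order condition} converts into $C'\Delta t_{2j}^{-1}t$.

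The main obstacle is the \emph{exponential} concentration demanded by \eqref{eq: usual temporal job} and \eqref{eq: unusual temporal job}, since candidate admissible paths are strongly dependent and Chernoff bounds cannot be applied path-by-path. Here my strategy is to invoke Proposition \ref{lm: leading order} at spatial scale $S_{j+1}$, which provides an exponential-moment estimate for the maximal number of $T_j$-jobs along any path at rate $T_j^{-(1+\delta)}=\Delta t_{2j}^{-1}$, and then to iterate Lemma \ref{lm: no influence} across the intermediate scales $S_{j+2},\ldots, S_{i+1}$. At each step the exponential rate contracts by a factor $\kappa$ (this is exactly where condition \eqref{eq: no influence from upper} is used, summing the contributions of higher spatial scales); after $i-j+2$ applications the accumulated factor is $\kappa^{i-j+2}$ and gives \eqref{eq: usual temporal job}. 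For the boundary case $i=j$ with $t<r_{2i}\Delta t_{2i}$, the mean count is only of order $\Delta t_{2i}^{-1}t=O(1)$, so we inflate the threshold to $c'\Delta t_{2i}^{2\kappa}$ and a Cram\'er-type Poisson tail bound at level $\Delta t_{2i}^\kappa$ gives \eqref{eq: unusual temporal job}.

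Finally, \eqref{eq: detail estimate} follows by a union bound over $j$. For $t\in[l_{2i}\Delta t_{2i}, r_{2i}\Delta t_{2i})$, estimate \eqref{eq: unusual temporal job} supplies the $\exp(-C\Delta t_{2i}^\kappa)$ contribution; the $j>i$ terms $\sum_{j>i}C'\Delta t_{2j}^{-1}t$ sum geometrically by \eqref{eq: well-separated} and are dominated by $C'\Delta t_{2i+2}^{-1}t$; the $j<i$ terms are absorbed by the summability condition \eqref{eq:summability}. For $t\in[r_{2i}\Delta t_{2i}, l_{2i+2}\Delta t_{2i+2})$ one uses \eqref{eq: usual temporal job} at $j=i$, whose exponent $C\kappa^2\Delta t_{2i}^{-1}t$ dominates $\Delta t_{2i}^\kappa$ by the definition $r_{2i}=\varrho^{2i}\Delta t_{2i}^\kappa$ in \eqref{eq: choices of l_i, r_i}, producing the $\exp(-C\Delta t_{2i}^{-1}t)$ term. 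Proposition \ref{prop: formal estimate} is then a corollary with $c=c'\sum_j T_j^{-(\delta-2\kappa)}$, which is finite because $\delta>2\kappa$.
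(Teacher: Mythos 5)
Your handling of items 3 and 4 and of the final union bound is essentially the paper's argument: build the exponential moment of the maximal number of $T_j$-jobs along paths from Proposition \ref{lm: leading order}, iterate Proposition \ref{lm: no influence} through the scales up to $S_{i+2}$ (this is Corollary \ref{cor: exponential moments}, with the geometric loss $g_{i+2-j}$ producing the $\kappa^{i-j+2}$ factor), and then apply the exponential Markov inequality; the assembly of \eqref{eq: detail estimate} and the passage to Proposition \ref{prop: formal estimate} with $c=c'\sum_j T_j^{-(\delta-2\kappa)}$ also match. Two small inaccuracies there: in item 4 the mean rate $\Delta t_{2i}^{-1}t$ can be as large as $r_{2i}=\varrho^{2i}\Delta t_{2i}^{\kappa}$, not $O(1)$, and the quantity being controlled is the supremum over admissible paths, so a ``Cram\'er-type Poisson tail'' is not directly available --- you must use the same exponential moment at parameter $g_2$ as in item 3, which is what the paper does.

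The genuine gap is in items 1 and 2. You claim that, up to an exponentially unlikely event, $\mathcal{R}_t$ is confined on $B_i$ to a box of diameter of order $S_{i+1}$. This is false on most of the time interval: $t$ may be as large as $l_{2i+2}\Delta t_{2i+2}$, which by \eqref{eq: well-separated} exceeds $\varrho^{2i+1}S_{i+1}^{\alpha(1+\kappa)}\gg \Delta t_{2i+1}=S_{i+1}^{\alpha}$, and over such times the range grows ballistically at speed of order $S_{i+1}^{1-\alpha}$, hence to size far beyond $S_{i+1}$. What is true (and what the paper uses for \eqref{eq: no large spatial job}) is confinement to scale $S_{i+2}$, since $t\ll S_{i+2}^{\alpha(1-\kappa)}$, obtained from Proposition \ref{lm: d>1 case}/Corollary \ref{cor: exponential estimates}; the first-moment count of $S_{i+j}$-jobs is then taken over boxes $[-3S_{i+j},3S_{i+j}]^d\times[0,t]$ (the box must scale with the job radius, not with the range), giving $\sum_{j\ge2}\lambda S_{i+j}^{-\alpha}t\lesssim S_{i+2}^{-\alpha}t\le \Delta t_{2i+2}^{-1}t$ by \eqref{eq: useful ones}. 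Item 1 is thus repairable, but item 2 is not by your route: with the correct range $r(t)$ the swept-box first moment is $\lambda t\,r(t)^d\,T_j^{-(1+\beta)}$, and to reach $C'\Delta t_{2j}^{-1}t$ you would need $r(t)^d\lesssim T_j^{\beta-\delta}$, which does not follow from \eqref{eq: leading order condition} (that only gives $S_j^dT_j^{-(1+\beta)}\le T_j^{-(1+2\delta)}$) and is delicate for $j=i+1$ near the critical curve $\alpha\beta=d$. The paper avoids the swept-box computation entirely: on $B_i$ the maximal number of $T_j$-jobs is $X^{(i+2)}_{t;j}$, and \eqref{eq: no large temporal job} follows from the integer-valuedness bound $\mathbb{P}(X\ge 1)\le(\mathbb{E}[e^{g X}]-1)/(e^{g}-1)$ together with Corollary \ref{cor: exponential moments}, which in effect charges only the $S_j$-tube around admissible paths rather than the full box swept by $\mathcal{R}_t$. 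You should route item 2 through that same exponential-moment machinery you already set up for items 3 and 4.
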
 
The proof of Proposition \ref{prop: precise estimate} is postponed to section \ref{sec: proof of main estimate}. 
To aid the proof, we introduce some tools in section \ref{sec: boundary process, modified paths}, and use them to estimate exponential moments of the maximal numbers in section \ref{sec: leading order}. By \edtf{the} Markov Inequalities, we derive estimates in Proposition \ref{prop: precise estimate} from exponential moments. 

\subsection{A Borel-Cantelli Type of Lemma}\label{sbsec: BC lemma}
The next lemma shows that we can get stability from a Borel-Cantelli type of estimate, see \eqref{eq: BC lemma}, \ledd{and therefore, showing Theorem \ref{thm: existence model in IV} is reduced to Proposition \ref{prop: formal estimate}.} The proof is elementary, and it comes from extensions of admissible paths, and monotonicity of $\tilde{W}(t,\mathbf{0})$ in $t$.  A similar argument can be found in the proof of Proposition 14 \cite{18FKM}. 
\begin{lemma}\label{lm: BC lemma}
	Suppose there exist positive constants {$a>0$, $b< \frac{\edts{1}}{1+a}$} and a diverging sequence of times \edtff{$(u_n)$} such that \edtff{$u_n \leq u_{n+1} \leq (1+a)u_n$}, and 
	\begin{equation} \label{eq: BC lemma}
	\sum_n P\left( \sup U(\gamma_{0,\edtff{u_n}})> b \cdot \edtfff{u_n}  \right)  < \infty,  
	\end{equation} where the supremum is over all admissible path with the initial point {$(\edtt{0,\mathbf{0}})$}.
	Then $\{\tilde{W}(t,\edtt{\mathbf{0}})\}_{t\geq 0}$ is tight.
\end{lemma}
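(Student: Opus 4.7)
My plan is to exploit the fact that $\tilde{W}(t,\mathbf{0})$ is increasing in $t$ almost surely (as remarked after \eqref{eq: alternative def}), which reduces tightness of $\{\tilde{W}(t,\mathbf{0})\}_{t\geq 0}$ to the almost sure finiteness of the pointwise limit $\tilde W(\infty,\mathbf{0}) := \sup_{t\geq 0}\tilde W(t,\mathbf{0})$. Indeed, once finiteness is established, $\mathbb{P}(\tilde W(t,\mathbf{0}) > M) \leq \mathbb{P}(\tilde W(\infty,\mathbf{0}) > M) \to 0$ as $M\to\infty$, uniformly in $t$. The almost sure finiteness will follow from a classical Borel-Cantelli argument applied to the hypothesis \eqref{eq: BC lemma}, combined with the elementary observation that admissible paths can be trivially extended in time without decreasing their workload.

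First I would apply the classical Borel-Cantelli lemma to \eqref{eq: BC lemma}: writing $M(u):=\sup_{\gamma_{0,u}(0)=\mathbf{0},\ \gamma_{0,u}\text{ admissible}} W(\gamma_{0,u})$, on an event $\Omega_0$ of full probability there is a random index $N_0$ such that $M(u_n)\leq b\,u_n$ for every $n\geq N_0$. Next I would record the monotonicity $M(u)\leq M(u')$ for $u\leq u'$: given any admissible $\gamma_{0,u}$, the concatenation $\tilde\gamma$ that equals $\gamma$ on $[0,u]$ and stays constant at $\gamma(u)$ on $[u,u']$ is still piecewise constant and right-continuous with no new jumps to justify, and intersects at least every job that $\gamma$ intersects, so $W(\tilde\gamma)\geq W(\gamma)$. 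Combining both inputs, for $u\in[u_n,u_{n+1}]$ with $n\geq N_0$,
\[
M(u)-u\;\leq\; M(u_{n+1})-u_n\;\leq\; b\,u_{n+1}-u_n\;\leq\;\bigl(b(1+a)-1\bigr)u_n\;<\;0,
\]
using $u_{n+1}\leq(1+a)u_n$ and $b(1+a)<1$. For $u\in[0,u_{N_0}]$ the same monotonicity yields $M(u)-u\leq M(u_{N_0})\leq b\,u_{N_0}$.

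Putting the two ranges together,
\[
\tilde W(\infty,\mathbf{0})\;=\;\sup_{u\geq 0}\bigl(M(u)-u\bigr)\;\leq\; b\,u_{N_0}\;<\;\infty\quad\text{almost surely},
\]
which is the desired tightness. I do not anticipate any serious obstacle; the one point that requires attention is that hypothesis \eqref{eq: BC lemma} controls $W$ rather than the score $V=W-u$, so the subtraction of $u$ in the definition of $\tilde W$ must be tracked separately. The spacing condition $u_{n+1}\leq(1+a)u_n$ together with $b(1+a)<1$ is exactly the arithmetic calibration that makes $b\,u_{n+1}-u_n$ eventually strictly negative, so that the supremum of $M(u)-u$ over the tail $u\geq u_{N_0}$ is nonpositive and the full supremum is controlled by the initial-range value $b\,u_{N_0}$.
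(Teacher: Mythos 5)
Your proof is correct and follows essentially the same route as the paper's: extend admissible paths in time to get monotonicity of the maximal workload, use the summability \eqref{eq: BC lemma} together with the calibration $b(1+a)<1$ to make the score $W-u$ negative beyond a threshold time, and control the initial range by monotonicity. The only difference is packaging: you invoke the qualitative Borel--Cantelli lemma to obtain the (slightly stronger) almost sure finiteness of $\sup_{t\geq 0}\tilde W(t,\mathbf{0})$ and deduce tightness from it, whereas the paper works directly with $\epsilon$-quantified tail bounds of the convergent series; the underlying mechanism is identical.
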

\begin{proof} Firstly, for $s'<s$, we can always extend an admissible path $\gamma_{0,s'}$ on $[0,s']$ to a new one $\gamma_{0,s}$ on $[0,s]$ by letting
	${\gamma_{0,s}(u) =
		\gamma_{0,s'}(u)}$, ${0\leq u\leq s'}$, and
	$\gamma_{0,s'}(s'), \text{ otherwise.} $ {Therefore,} $U(\gamma_{0,s}) \geq U(\gamma_{0,s'})$.
	By \eqref{eq: BC lemma}, we get that for any $\epsilon>0$, there exists an $n_\epsilon$, such that
	\[P\left(  U(\gamma_{0,u_n})> b \cdot \edtfff{u_n}, \text{ for some } n\geq n_\epsilon, \text{ and some admissible path } \gamma_{0,u_n} \text{ with } \gamma_{0,u_n}(0)=\edtt{\mathbf{0}}  \right)  < \epsilon.\] 
	Then by extending admissible paths, we have 
	\begin{equation*}P\left(  U(\gamma_{0,s})> b(1+a) \cdot s, \text{ for some } s\geq u_{n_\epsilon}, \text{ and some admissible path } \gamma_{0,s} \text{ with } \gamma_{0,s}(0)=\edtt{\mathbf{0}}  \right)  < \epsilon.
	\end{equation*}
{Since} $V\left(\gamma_{0,s}\right) = U(\gamma_{0,s})-s$, and $b(1+a)-1<0$, we get
	\begin{equation}\label{eq:finiteness2}P\left(  V(\gamma_{0,s})> 0, \text{ for some } s\geq u_{n_\epsilon}, \text{ and some admissible path } \gamma_{0,s} \text{ with } \gamma_{0,s}(0)=\edtt{\mathbf{0}}  \right)  < \epsilon.
	\end{equation}	
	We also get a similar statement from \eqref{eq: BC lemma} and extending admissible paths, 
	\[P\left(  U(\gamma_{0,u_{n_\epsilon}})> b \cdot u_{n_\epsilon}, \text{ for some admissible path } \gamma_{0,u_{n_\epsilon}} \text{ with } \gamma_{0,u_{n_\epsilon}}(0)=\edtt{\mathbf{0}}  \right)  < \epsilon,\]
{which implies that} 
		\begin{equation} \label{eq:finiteness}
		P\left( \tilde{W}(\edtff{u_{n_\epsilon}},\edtt{\mathbf{0}})> (b+1)u_{n_\epsilon} \right)   < \epsilon.
		\end{equation}
	
	Now we prove tightness of {$\left(\tilde{W}(t,\edtt{\mathbf{0}})\right)$.} If $t>\edtff{u}_{n_\epsilon}$, by \edtff{\eqref{eq:finiteness2},\eqref{eq:finiteness}}, and the definition of $\tilde{W}(t,\edtt{\mathbf{0}})$, we obtain that there is an $N_\epsilon= (b+1)\edtff{u}_{n_\epsilon}>0$ such that		\begin{align*}
		P\left(  \tilde{W}(t,\edtt{\mathbf{0}})>N_\epsilon \right) \leq& P\left(  \tilde{W}(\edtfff{u}_{n_\epsilon},\edtt{\mathbf{0}})>N_\epsilon \right)  \\
		+ P&\left(  V(\gamma_{0,s})>N_\epsilon, \text{for some }s \geq \edtff{u}_{n_\epsilon}, \text{ and some admissible path } \gamma_{0,s} \text{ with } \gamma_{0,t_s}(0)=\edtt{\mathbf{0}} \right) \\
		<& 2\epsilon; 	
		\end{align*} 
		If $t\leq \edtff{u}_{n_\epsilon}$, from the fact that $\tilde{W}(t,\edtt{\mathbf{0}})$ is increasing in $t$, we get that 
		\[P\left(  \tilde{W}(t,\edtt{\mathbf{0}})>N_\epsilon \right) \leq P\left(  \tilde{W}(\edtff{u}_{n_\epsilon},\edtt{\mathbf{0}})>N_\epsilon \right)<\epsilon \]
	We conclude that $(\tilde{W}(t,\edtt{\mathbf{0}}))_{t\geq 0}$ and $(W(t,\edtt{\mathbf{0}}))_{t\geq 0}$ are tight.
\end{proof}

\subsection{Proof of Theorem \ref{thm: existence model in IV} given Proposition \ref{prop: precise estimate}}\label{subsec: proof of main theorem}
Now we prove Theorem \ref{thm: existence model in IV} by assuming Proposition \ref{prop: precise estimate}, and therefore Proposition \ref{prop: formal estimate}.

\begin{proof} (Theorem \ref{thm: existence model in IV}) {In view of Lemma \ref{lm: BC lemma}, we only need to find a sequence $(u_n)$ and to verify \eqref{eq: BC lemma} for some $b<1$.} Let $a>0$, we define a sequence of $(u_n)$ inductively:
	\edt{
	\begin{equation}
	\begin{cases} \edtff{u_1} =& l_1\cdot \Delta t_1\\
	\edtff{u_{n+1}} =&  \min \{(1+a)\edtff{u_n}, l_k\cdot \Delta t_k,:  l_k\cdot \Delta t_k > \edtff{u_n} \text{, for some $k$}\}
	\end{cases}.
	\end{equation}
} It is immediate that \edtff{$u_{i+1} =  (1+a)u_i$}, if $\edtff{u_{i+1}} \neq l_k\cdot \Delta t_k$ for any $k$.
	
	{From} Proposition \ref{prop: formal estimate}, there exists positive constants $c,C,C'$ such that, for any $k$
\edt{
	\begin{align*}
	\sum_{l_k\cdot \Delta t_k \leq \edtff{u_i} < r_k\cdot \Delta t_{k}} P&\left( \sup U(\gamma_{0,t_i})> c\cdot \edtff{u_i}  \right)	< \sum_{l_k\cdot \Delta t_k \leq \edtff{u_i} < r_k\cdot \Delta t_{k}} \left(\exp(-C \edts{\Delta t_{k}}^{\kappa}) +  C' {\Delta t_{k+1}}^{-1} \edtff{u_i} \right).
	\end{align*}
	By summing according to the constant $\exp(-C \edts{\Delta t_{k}}^{\kappa})$, and other terms linear in $\edtff{u_i}$, we rewrite the previous sum as
	\[\exp(-C {\Delta t_k}^{\kappa}) \cdot \abs{ \{\edtff{u_i}: l_k\cdot \Delta t_k \leq \edtfff{u_i} < r_k\cdot \Delta t_{k} \}   }    + C' {\Delta t_{k+1}}^{-1} \sum_{l_k\cdot \Delta t_k \leq \edtff{u_i} < r_k\cdot \Delta t_{k}} \edtff{u_i}. \]
\edts{Since} $(\edtff{u_i})$ grows geometrically in $i$ on the interval $[l_k\cdot \Delta t_k,r_k\cdot \Delta t_k]$, $\edtff{u_{i+1}} = (1+a)\edtff{u_i}$, we get the sum bounded by
	\[\exp(-C {\Delta t_k}^{\kappa}) \left( \frac{\log(r_k) -\log(l_k)}{\log(1+a)} +1\right) + C' \frac{r_k\cdot \Delta t_k}{\Delta t_{k+1}} \frac{1}{a},\] which by \eqref{eq: well-separated} and \eqref{eq: choices of l_i, r_i}, is bounded by
\begin{align}\frac{2}{\log(1+a)} \exp(-C {\Delta t_k}^{\kappa}) \log (\varrho^k \Delta t_k)  + \exp(-C {\Delta t_k}^{\kappa})  + C' \varrho^{-(k+1)}. \label{eq: inequality1}
	\end{align}}
 Similarly, for any k,
	\begin{align}
	\sum_{r_k\cdot \Delta t_k \leq \edtff{u_i} < l_{k+1}\cdot \Delta t_{k+1}} P&\left( \sup U(\gamma_{0,t_i})> c\cdot \edtff{u_i}  \right)	< \sum_{r_k\cdot \Delta t_k \leq \edtfff{u_i} < l_{k+1}\cdot \Delta t_{k+1}} \left( \exp(-C {\Delta t_k}^{-1}\cdot \edtff{u_i}) +  C' {\Delta t_{k+1}}^{-1} \edtff{u_i}  \right) 
	\notag \end{align}\edts{
Since ${u}_{i+1} =  {u_i}(1+a)$ when $u_{i+1} \neq l_k\cdot \Delta t_k$ for any $k$, the sequence $\left(\exp(-C {\Delta t_k}^{-1}\cdot u_i)\right)_{r_k\cdot \Delta t_k \leq u_i < l_{k+1}\cdot \Delta t_{k+1}}$ is bounded by a geometric sequence {with its first term} as $\exp(-C r_k)$ and a ratio at most $\exp(-C a\cdot r_k) \leq \exp(-C a)$. Therefore, we bound the sum by
\begin{align}
		&	\exp(-C r_k) \cdot \frac{1}{1-\exp(-C\edts{a})) }  + C' {\Delta t_{k+1}}^{-1} \sum_{r_k\cdot \Delta t_k \leq \edtff{u_i} < l_{k+1}\cdot \Delta t_{k+1}} \edtff{u_i}  \notag   \\
	<&  	\exp(-C r_k) \cdot \frac{1}{1-\exp(-C\edts{a}) }  + C' \frac{l_{k+1} }{a}. 
	\label{eq: inequality2}
	\end{align}}
 Summing over $k$ and by \eqref{eq:summability}, we get
	\[\sum_{n} P\left( \sup U(\gamma_{0,\edtfff{u_n}})> c\cdot \edtff{u}_n  \right) <\infty.\] By rescaling the arrival rate of the system {by a factor $r =\min\{ \frac{1}{2}, \frac{{\lambda_1}}{c(1+a)}\}$}, and choosing $u'_n = {r^{-1}} \cdot u_n $, we get the estimate \eqref{eq: BC lemma} with $b=r \cdot c {\leq}\frac{{\lambda_1}}{1+a}$. \edtss{Hence, the system is stable.} 
\end{proof}
\\Thus our proof is reduced to {proving} Proposition \ref{prop: precise estimate}.

\section{Connectivity and Spatial Growth}\label{sec: boundary process, modified paths}
In this section, we consider the growth of the range of the admissible paths, $\mathcal{R}_t$ (see \eqref{eq: domain of influence} below).  For this question  the temporal sizes $T_j$ play no roles, and results only involve $(S_n)$. For convenience, we {always} assume that $A$ is large enough so that \eqref{eq: useful ones} is valid for $(S_n)$. \ledd{As the job arrivals follow independent Poisson processes, for the law $\tilde{\tilde{P}}^{\theta,A}$ in Theorem \ref{thm: existence model in IV}, the arrival rate $\lambda_n$ for jobs with a (positive) spatial size $S_{n}$ is 
\begin{equation}\label{eq: arrival rate for jobs}
	\lambda_n = c_1 \lambda  S_n^{-(d+\alpha)},
\end{equation}
for some $c_1>0$; for convenience, we assume $c_1 =1$ for the rest of paper.} A major ingredient is to estimate the probability that two {time-space points $(0,\mathbf{0})$ and $(t,x)$ in $ \mathbb{R}_+ \times \mathbb{Z}^{d}$} are \textit{connected} via admissible paths involving only jobs of spatial sizes $S_j$, for $j\leq n$. For $t< S_n^\alpha$, we {show} that the probability decays geometrically in the spatial $l_\infty$ distance, 
\[  \abs{{(t,x)}}
  \edtf{\equiv} \abs{x}
,\] under the scale $S_{n}$, see equation \eqref{eq: main} below. The estimate \eqref{eq: main} is an analogue of \eqref{eq:stochastic bounds}. We {then } explain the similarity between these two inequalities. 
The value of this result is that it { cuts}  down the number of admissible \edtts{paths} that we must consider.

Before we make more precise statement, we introduce some definitions and assumptions. Similarly to subsection \ref{subsec: admissible path}, we use graphical constructions for the job arrivals. We define $n$-admissible paths, and a pair of $n$-connected {time-space} points by restricting to jobs of spatial sizes at most $S_n$:
\begin{enumerate}
	\item 
	An \textit{$n$-admissible path} is an admissible path \ledd{${\gamma^{(n)}: [u,t] \rightarrow \mathbb{Z}^d}$} \edtts{that} starts from {time-space} point $(u,x)$ and ends at $(t,y)$, and with the further restriction that only jobs of spatial sizes $S_j$, $j\leq n$ intersect it, in the sense of \eqref{eq: intersection}, \edtt{i.e., if $	\gamma^{(n)}(s)\neq \gamma^{(n)}(s-)$, then there is a job of spatial size $S_j$ with $j \leq n$ arriving at {$(s,x)$} with
	\begin{equation*}
		\gamma^{(n)}(s), \gamma^{(n)}(s-) \in B(x,S_j):=\{y:\abs{y-x}
			\leq  S_j\}, 
		\end{equation*}}
 for some $j\leq n$.\\
		 We also \ledd{use $\gamma^{(n)}_{u,t}$ or $\gamma^{(n)}_{u,t;x,y}$ when we emphasize that $\gamma^{(n)}$ is on the interval $[u,t]$ or it has end points $(u,x)$ and $(t,y)$.} 

\item \label{def: n-connected}
	Two {time-space} points $(t_1,x_1)$ and $(t_2,x_2)$ are \textit{n-connected} if there is an $n$-admissible path with starting and ending points $(t_1,x), (t_2,y)$, such that 
	\begin{equation}\label{eq: ending points distance}
	\abs{x-x_1}
	, \abs{y-x_2}
	\leq \frac{S_{n}}{2}. 
	\end{equation}
\end{enumerate}
These definitions are to facilitate a discretization of the model at the scale $S_n$. We note that for a pair of $n$-connected points $(t_1,x_1)$ and $(t_2,x_2)$, there may not exist any $n$-admissible path $\gamma^{(n)}$ ``connecting" them in the sense that $\gamma^{(n)}(t_1)=x_1$ and  $\gamma^{(n)}(t_2)=x_2$. We also define the domain of influence $\mathcal{R}_t$ as a collection of points in {time-space} which are connected (or n-connected) to $(0,\mathbf{0})$ by admissible (or n-admissible) paths, 
\begin{align}\label{eq: domain of influence}
	\mathcal{R}_t &:= \left\{(x,s):\exists \text{ an admissible path $\gamma_{0,t}$, with $\gamma_{0,t}(0) = \mathbf{0}$, and $\gamma_{0,t}(s) = x$ for some $s\leq t$}     \right\}, \\
\edtts{	\mathcal{R}^{(n)}_t }&:= \left\{(x,s): \exists \text{ an n-admissible path $\gamma^{(n)}_{0,t}$, with $\gamma^{(n)}_{0,t}(0) = \mathbf{0}$, and $\gamma^{(n)}_{0,t}(s) = x$ for some $s\leq t$}     \right\}, \\
\edtts{	\mathcal{\tilde{R}}^{(n)}_t} &:= \left\{ (s,x): \text{ $(s,x)$ and  $(0,\mathbf{0})$ are $n$-connected for some $s\leq t$}     \right\}.
\end{align} It is obvious that $\mathcal{R}_t =  \bigcup_{n} 	\mathcal{R}^{(n)}_t \edtts{   \subset\bigcup_{n}	\mathcal{\tilde{R}}^{(n)}_t}  $.

Since the model is translation invariant, we \ledd{need only consider} the probabilities for two {time-space} points $(0,\mathbf{0})$ and $(t,x)$,
\begin{equation}\label{eq: n-connectivity}
p_n(t,x) :=  P\left(\text{$(t,x)$ and $(0,\mathbf{0})$ are n-connected}\right). 
\end{equation} 

In the case when $d=1$, we { use} \eqref{eq:stochastic bounds} and large deviation estimates to show that $p_n(t,x)$ decays geometrically in $x$ under the scale $S_{n}$ when $t\leq S_{n}^{\alpha}$. In fact, in any dimension $d\geq 1$, {it follows } from Proposition \ref{lm: d>1 case} that this {remains} true. 
\begin{proposition}\label{lm: d>1 case}
	Recall that $(S_n)$ satisfy \eqref{eq: useful ones} {when $A$ is large enough}. 
	 There are positive constants depending on $\lambda$, $q_n(\lambda)<1$, so that 
		for any $t\leq S_{n}^{\alpha}$ and positive integer $i$, when ${(i-\frac{1}{2})S_{n}  < \abs{x}
					< (i+\frac{1}{2})S_{n} }$
		\begin{equation}\label{eq: main}
			p_n(t,x)  \leq q_n(\lambda) ^{i-1}.
		\end{equation}
	Moreover, we \ledd{can} choose the upper bounds $q_n(\lambda)$ so that they satisfy the following properties: 
	\begin{itemize}
	\item for all $n>0$,
	\begin{equation}\label{eq:recursive formula}
		q_{n+1}(\lambda) = c_1 \lambda + \left(C_d \cdot q_{n}(\lambda)\right)^{\frac{5S_{n+1}}{8S_n}},
		\end{equation} for some positive constants $c_1, C_d$ depending only on the dimension $d$,
		\item
		$\exists \lambda_d>0$ and $c_d < \infty $ depending \edt{only} on $d$ so that for  $\lambda< \lambda_d$,
		\begin{equation}\label{eq: linear}
		q_{n}(\lambda) <c_d \lambda,
		\end{equation}
		for all $n>0$.
	\end{itemize}

\end{proposition}

Before proving Proposition \ref{lm: d>1 case}, we explain its content. It is immediate that $p_n(x,t)$ is increasing in $t$. Therefore, showing \eqref{eq: main} amounts to showing that the probability to connect a {time-space} point $(S_{n}^\alpha,x)$, with jobs of spatial sizes up to scale $S_n$, decays  geometrically in their spatial distance $\abs{x}
$ under the scale $S_{n}$. This is not too surprising because $S_{n}^{\alpha}$ is the scale of seeing a job of \edtts{spatial} size $S_{n}$ \edtts{containing a particular spatial point}, and it is much larger than \edtff{$S_{n-1}^{\alpha}$, which is} the scale \edtts{for the arrival of a job of strictly smaller spatial sizes \edtts{containing} a particular site}. We expect \edtts{the speed of the spatial growths for $R_t$ to be of scale} at most $\frac{S_{n-1}}{S_{n-1}^{\alpha}} = S_{n-1}^{1-\alpha}$ when jobs of size at most $S_{n-1}$ are allowed, \edtts{while connecting two points of distance $S_n$ a spatial job of size $S_n$ gives us a speed of scale at least $S_n^{1-\alpha}$, which is much larger.} As a consequence, we expect \edtts{the speed of the spatial growths for $\mathcal{R}_t$ to be of scale $S_{n}^{1-\alpha}$ and the geometric decay in $i$} from large deviation estimates. The harder part of Proposition \ref{lm: d>1 case} is to show that $q_n(\lambda)< c_d \lambda$ uniformly in $n$, when $\lambda$ is small. Recall that when $A$ is large, we have that $S_n$ grows fast enough in $n$ \ledd{from \eqref{eq: useful ones}}: for all $n\in \mathbb{N}$, 
	\begin{equation*}  
	\sum_{j\leq n} 10 S_j^{1-\alpha} <S_{n+1}^{1-\alpha},  
	\end{equation*} 
	which implies that the ratio $\frac{5S_{n+1}}{8S_n} \geq 3$ when $n$ is large.
	In fact, \ledd{it} provides a recursive formula (depending on dimension $d$) for an upper bound of $q_n$ in terms of $\lambda$ and $S_n, S_{n+1}$. {In view of \eqref{eq:recursive formula} and \eqref{eq: useful ones}, we get \eqref{eq: linear} when $\lambda$ is small enough,} and we also \edt{see that condition \eqref{eq: useful ones} is \textit{not optimal}}.

For time $t> S_n^{\alpha}$, we show a corollary of Proposition \ref{lm: d>1 case}, see Corollary \ref{cor: exponential estimates} below. \eqref{eq: general} can be derived with \edtfff{a similar argument for} the second step in the proof of Proposition \ref{lm: d>1 case} below. We { give } a proof at the end of this section.
	\begin{corollary}\label{cor: exponential estimates}
	Recall that $(S_n)$ satisfy \eqref{eq: useful ones} {when $A$ is large enough}. {Let $\lambda_d$ be} from Proposition \ref{lm: d>1 case}. Then there exist \edts{positive} constants $\edts{ c'_d, \lambda_{d,0}<\lambda_d}$ depending on $d$, \edts{so that $c'_d \lambda_{d,0}<1$ and when $\lambda <\lambda_{d,0}$}
	\begin{equation} \label{eq: general}
	p_n(x,t)    \leq \left(c'_d \lambda \right)^{\left(\edts{ S_{n}^{-(1-\alpha)}\frac{\abs{x}}{t} -1}\right)  }
	\end{equation} for all positive $t>S_n^{\alpha}$.	
	\end{corollary}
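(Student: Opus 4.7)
My plan is to lift the short-time estimate of Proposition \ref{lm: d>1 case} (valid for $t \leq S_n^\alpha$) to arbitrary $t > S_n^\alpha$ by chopping the time interval $[0,t]$ into chunks of length $L := S_n^\alpha$ and chaining the short-time bound across the chunks, paralleling the second step of the proof of Proposition \ref{lm: d>1 case} but with time playing the role that space played there.

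Concretely, set $k := \lceil t/L \rceil$ and partition $[0,t]$ into the subintervals $J_j := [(j-1)L, jL \wedge t]$, $j=1,\ldots,k$. If $(\mathbf{0},0)$ and $(x,t)$ are $n$-connected then there must exist intermediate lattice points $\mathbf{0} = x_0, x_1, \ldots, x_k$ with $\abs{x_k - x}_\infty \leq S_n/2$ such that, for every $j$, the pair $(x_{j-1},(j-1)L)$ and $(x_j, jL \wedge t)$ is $n$-connected using only the jobs that arrive in $J_j$. Independence of the Poisson processes over disjoint time windows makes these $k$ chunk-events jointly independent once the $x_j$'s are fixed, and Proposition \ref{lm: d>1 case} bounds each chunk's probability by $q_n(\lambda)^{(i_j - 1)_+}$ with $i_j$ the integer satisfying $(i_j - 1/2) S_n \leq \abs{x_j - x_{j-1}}_\infty < (i_j + 1/2) S_n$ (and $i_j = 0$ when the displacement is smaller than $S_n/2$). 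A union bound over $(x_1,\ldots,x_{k-1})$, combined with the estimate $C_d(i_j+1)^{d-1} S_n^d$ for the number of lattice points in each distance class and the triangle-inequality constraint $\sum_j i_j \geq \abs{x}/S_n - 1$, yields
\begin{equation*}
p_n(x,t) \;\leq\; \sum_{\vec{i} \in \mathbb{Z}_{\geq 0}^k :\; \sum_j i_j \geq \abs{x}/S_n - 1}\; \prod_{j=1}^{k} C_d (i_j+1)^{d-1} S_n^d \, q_n(\lambda)^{(i_j-1)_+}.
\end{equation*}

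The remaining step is an exponential Markov / Chernoff inequality. Since $q_n(\lambda) \leq c_d \lambda$ by Proposition \ref{lm: d>1 case}, one may choose $s > 0$ with $e^s q_n(\lambda) \leq 1/2$ whenever $\lambda < \lambda_d$ for $\lambda_d$ depending only on $d$. The constrained sum then factorizes to
\begin{equation*}
p_n(x,t) \;\leq\; e^{-s(\abs{x}/S_n - 1)}\, \Phi(s)^k, \qquad \Phi(s) := C_d S_n^d \sum_{i \geq 0} e^{si}(i+1)^{d-1} q_n(\lambda)^{(i-1)_+},
\end{equation*}
with $\Phi(s) < \infty$. Taking $s$ of order $\log(1/(c_d\lambda))$, substituting $k \leq t/S_n^\alpha + 1$, and recognizing $\abs{x}/S_n \cdot (S_n^\alpha/t) = S_n^{-(1-\alpha)} \abs{x}/t$, one rewrites the right-hand side in the form $(c'_d \lambda)^{A_d S_n^{-(1-\alpha)}\abs{x}/t - B_d}$ for suitable constants $A_d, B_d, c'_d, \lambda_d > 0$ depending only on $d$ with $c'_d \lambda_d < 1$.

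The main technical obstacle is absorbing the combinatorial prefactor $(C_d S_n^d)^k$ arising from the union bound on intermediate points into the exponent in a way that is uniform in $n$. This is handled by choosing $A_d$ small enough and $B_d$ large enough (both depending only on $d$), so that the stated bound becomes genuinely informative precisely in the large-deviation regime $\abs{x}/t \gg S_n^{1-\alpha}$, where the exponential decay in $\abs{x}/S_n$ dominates the polynomial-in-$S_n$ growth; in the complementary regime the exponent is negative and the asserted bound is vacuously satisfied since its right-hand side exceeds one.
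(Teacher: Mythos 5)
Your overall strategy (chop $[0,t]$ into chunks of length $S_n^\alpha$, chain Proposition \ref{lm: d>1 case} across chunks using independence of disjoint time slabs, then apply a Chernoff bound) is sound and runs parallel to the paper's argument, but your execution has a genuine quantitative gap at the union bound. You let the intermediate points $x_j$ range over all of $\mathbb{Z}^d$, so each chunk carries an entropy factor $C_d(i_j+1)^{d-1}S_n^d$; consequently $\Phi(s)\geq C_dS_n^d$ and your final estimate carries a prefactor at least $(C_dS_n^d)^{k}$ with $k\approx t/S_n^\alpha$. Since the convergence requirement $e^sq_n(\lambda)\leq 1/2$ forces $s\leq\log\bigl(1/(2q_n(\lambda))\bigr)=O(\log(1/\lambda))$ \emph{uniformly in $n$}, this prefactor is beaten by the decay $e^{-s\abs{x}/S_n}$ only when $\abs{x}/S_n\gtrsim k\,d\log S_n/\log(1/\lambda)$, i.e. only when $\abs{x}/t\gtrsim S_n^{1-\alpha}\log S_n$. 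The corollary, however, asserts a nontrivial bound as soon as $\abs{x}/t\geq (B_d/A_d)S_n^{1-\alpha}$ with $A_d,B_d$ independent of $n$, so your closing claim that the complementary regime is ``vacuous'' is false: take, say, $t=2S_n^\alpha$ and $\abs{x}_\infty=c_0S_n$ with $c_0>2B_d/A_d$ a fixed constant; then the right-hand side of \eqref{eq: general} equals $(c'_d\lambda)^{A_dc_0/2-B_d}<1$, a nontrivial claim, while your bound is at least $e^{-s(c_0-1)}(C_dS_n^d)^2$, which tends to infinity with $n$. No choice of $A_d$ small and $B_d$ large depending only on $d$ closes this window, because its width grows like $\log S_n$.

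The repair is to discretize the chunk endpoints at the scale $S_n$: the slack $S_n/2$ built into the definition of $n$-connectedness, \eqref{eq: ending points distance}, lets you replace $\gamma(jL)$ by the nearest point of $(S_n\mathbb{Z})^d$, so the per-chunk entropy drops to $O\bigl((i_j+1)^{d-1}\bigr)$, and then your Chernoff computation does go through (note also that the triangle inequality only gives $\sum_j i_j\geq \abs{x}/S_n-k/2-1/2$, not $\abs{x}/S_n-1$; the extra $k/2$ is harmless and is absorbed the same way as the per-chunk constants, after enlarging $B_d/A_d$). This is precisely how the paper avoids the problem, with an even coarser mesh: it reduces to $\abs{x}_\infty=2kmS_n$, $t=mS_n^\alpha$, forces the path through $2m$ checkpoints on $(kS_n\mathbb{Z})^d$ with consecutive spacing exactly $kS_n$ (so only $C_d^m$ checkpoint sequences), and replaces your Chernoff step by stopping times plus the observation that at least $m$ of the $2m$ consecutive connections must occur within time $S_n^\alpha$, each with probability at most $q_n(\lambda)^{k-1}$ by Proposition \ref{lm: d>1 case}; a binomial estimate then gives $\bigl(C_dq_n(\lambda)^{k-1}\bigr)^m\leq (c'_d\lambda)^{k-1}$, which is \eqref{eq: general}.
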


To show Proposition \ref{lm: d>1 case}, we use two steps with similar ideas. The first step is an induction on $i$, which measures the spatial distance between $(0,\mathbf{0})$ and $(S_{n}^\alpha,x)$ under the scale $S_{n}$. We divide the lattice \edtt{$\mathbb{Z}^d$} into cubes with side length $\edtf{S_{n}}$. In order to connect $(0,\mathbf{0})$ and a point $(S_{n}^\alpha,x)$ with an $n$-admissible path $\gamma^{(n)}$, the $n$-admissible path { must } pass through {a sequence of neighboring cubes of length at least $i$}. Therefore, there are at most $C_d^i$ \edtt{(for example, $C_d= 3^d$)} different sequences of cubes, and for each fixed sequence, we {bound} the probability that there is an $n$-admissible path connecting them. We use a stopping time argument to reduce the problem to the case when $i=2$, see \eqref{eq: base case} below. With union bounds, we obtain that the probability decays geometrically in $i$ (see \eqref{eq: main}) from \eqref{eq: base case}. The second step is to show \eqref{eq: base case} \edtt{below}, and to obtain a recursive formula for $q_n(\lambda)$, see \eqref{eq:recursive formula}. We use induction on $n$ this time, and we { again} divide $\mathbb{Z}^d$ into cubes with side length $S_{n}$, and consider a sequence of cubes connected by an $n$-admissible paths. In both steps, the stopping times (which are the first time to connect the next cubes) are bounded by $S_n^{\alpha}$, but the total time is different: in the first step, it is $S_n^{\alpha}$; in the second step, it is $S_{n+1}^\alpha$. This results in different constants due to different counting, and different {bounds} for the probability of a fixed $n$-connecting sequence of cubes, see  \eqref{eq: reduction} and \eqref{eq: key equation} below. 

\begin{proof}(Proposition \ref{lm: d>1 case})
	Let $n$ be fixed. We can always extend an $n$-admissible path $\gamma^{(n)}$ on $[0,t]$ to an $n$-admissible path $\hat{\gamma}^{(n)}$ on $[0,s+t]$, for some $s>0$, by letting $\hat{\gamma}^{(n)}$ to be constant after time $t$,
	\[ \hat{\gamma}^{(n)}(u) = \begin{cases}\gamma^{(n)}(u)&, \text{ if $u<t$} \\
	\gamma^{(n)}(t)&, \text{ if $u\geq t$}
	\end{cases}.   \]
	Hence, as claimed above, $p_n(t,x)$ is increasing in $t$, and we only need to estimate  $p_n\left(S_{n}^{\alpha},x\right)$. We {divide  } this into two steps.
	
		\begin{enumerate}[label= Step \arabic*.]
		\item  We first assume the following statement (see \eqref{eq: base case} below) \edttfff{ but defer its proof to the second step}.  \edts{There is a $\lambda_d >0$, such that for any $\lambda<\lambda_d$, we have a constant $A_n(\lambda)<\frac{1}{4\cdot 9^d}$ depending on $\lambda$,} such that
		\begin{equation} \label{eq: base case}
			p_n(t,x)  \leq A_n\edts{(\lambda)},
		\end{equation} \edts{for all $\abs{x}
		\in [\frac{3}{2} S_{n}, \frac{5}{2}S_{n}]$ and $t\leq S_{n}^{\alpha}$.}
		
		Now we take a point $y \in \mathbb{Z}^d$ with $\abs{y}
		\geq 2k S_{n}$ for some $k \in \mathbb{N}$, \edts{and we { estimate } $p_n(t,y)$, when $t\leq S_n^{\alpha}$.} By considering centers of cubes of scale $S_{n}$, we see that if $(t,y)$ and $(0,\mathbf{0})$ are $n$-connected, there exists at least $k+1$ {time-space} points $\{(t_i,x_i): i =0,1,\dots, k\}$, including  $(t_0,x_0)=(0,\mathbf{0})$ and $(t_k,x_k)=(t,y)$, such that 
		\begin{equation} \label{eq:consective cubes}
			t_{i+1} >t_i, \quad x_i \in \left(\edtts{S_{n}}\mathbb{Z}\right)^d \text{ for  $i=0,\dots, k-1$},
		\end{equation} 
		\begin{align}\label{eq: fixed points}
			\abs{x_{i+1} - x_i}
			= 2S_{n} \in  \left[\frac{3}{2} S_{n},\frac{5}{2} S_{n}\right]
		\end{align} 
		and 
		\begin{equation}\label{eq: connectivity induction}
			(t_i,x_i) \text{ and $(t_{i+1},x_{i+1})$ are $n$-connected.}  
		\end{equation}
		By \eqref{eq:consective cubes}, \eqref{eq: fixed points}, there are at most $\edtts{5}^{dk}$ such sequences of centers of cubes, $(x_i)_{i=1}^{k-1}$, and these sequence are deterministic. For each deterministic sequence $(x_i)^{k}_{i=0}$, we define an increasing sequence of (bounded) stopping times $u_i$ inductively, such that the increments are bounded by $S_{n}^\alpha$ and $(u_i,x_i), (u_{i+1},x_{i+1})$ are $n$-connected if $u_{i+1} -u_i <S_{n}^\alpha $:
		\begin{align}\label{eq: stopping time}
			u_0 &:=0,\\
			u_{i+1} &:= \inf\{u_{i}< t \leq u_{i}+ S_{n}^\alpha: \text{ $(u_{i},x_i)$ and  $(t,x_{i+1})$ are $n$-connected} \},
		\end{align} for $i=0,\dots, k-1$. 
		\eqref{eq: base case} indicates that each difference $u_{i+1}-u_{i} $ stochastically dominates a random variable $S_n^{\alpha}B_i$, 
		\begin{equation}\label{eq: stoch dominance, hitting time}
			u_{i+1}-u_{i} \geq S_n^{\alpha}B_i,
		\end{equation} where $B_i$ is a Bernoulli random variable with $P(B_i=1) = 1-A_n$ and $P(B_i=0) = A_n$. Then we use the strong Markov property of Poisson arrivals and Binomial random variable to get
		\begin{align} \label{eq: reduction}
			P&\left(\text{there exists} \text{ an increasing sequence $(u_i)_{i=0}^{k-1}$ such that} \right.
			\notag \\
			&\left.\text{$(u_i,x'_i)$ and $(u_{i+1},x'_{i+1})$ are $n$-connected}\right)
			\notag \\
			\leq& P\left(\sum_{i=0}^{k-1}(u_{i+1}-u_{i}) <  S_n^{\alpha}\right) \leq P\left(\sum_{i=0}^{k-1}B_i  < 1\right)
				\leq A_n^{k}.
		\end{align} Therefore, for $\abs{y}
	\geq 2k S_{n}$ and $t\leq S_{n}^{\alpha}$, we \ledd{bound} $p_n(t,y)$ by
		\begin{equation}\label{eq: first step final}
			p_n(t,y) \leq \edtf{3^{dk} A_n^{k}}\edts{\leq q_n^{2k} }.
		\end{equation} \edts{In particular, $q_n(\lambda) := \left(\edtts{5}^d A_n(\lambda)\right)^{\frac{1}{2}} <\frac{1}{2}$.}
		
		\item In fact, \eqref{eq: base case} is not surprising, since there is a positive probability $1-A_n$ such that no jobs of sizes $S_j$, $j\leq n$ with centers arriving in the {time-space} set $E_n:=\{(t,x): S_n<\abs{x}
		< \edtff{3S_n}, t< S_{n}^{\alpha} \}$. The harder part is to show that \edts{there is a $\lambda_d>0$ \textit{independent} of $n$, such that $A_n(\lambda)$} can be $\edtf{bounded}$ uniformly in $n$ when $\lambda < \lambda_d$. We { use} induction in $n$, and obtain a recursive formula for bounds of $A_n$ in terms of $\lambda$, see \eqref{eq: recursive equation for A_n} below.
		\begin{enumerate}
			\item  The case when $n=1$ is obvious. Since the number of jobs of size $S_1$ with centers arriving in the {time-space} set $E_1$ is a Poisson random variable, \ledd{we get by \eqref{eq: arrival rate for jobs}}
			\[  P\left(\text{no job arrivals in $E_1$}\right) = \exp(-\lambda S_1^{-(d+\alpha)}|E_1|).  \] Therefore,   \eqref{eq: base case} holds for $n=1$, and \edts{there are constants $\Lambda_d$, such that if ${\lambda<\Lambda_d}$,}
			\begin{equation}\label{eq:dependency in lambda} 
				A_1(\lambda) = 1- \exp(-\lambda S_1^{-(d+\alpha)}|E_1|) \leq {6^d} \lambda, 
			\end{equation} which converges to $0$ as $\lambda$ goes to $0$.
			
			\item Assume that \edts{there is a positive $\lambda_d\leq \Lambda_d$, such that for all $n\leq k$}, \eqref{eq: base case} holds. We also assume \eqref{eq: useful ones}. We { use} an argument similar to Step 1.
			
			Let $\abs{x}
			\in [\frac{3}{2} S_{k+1}, \frac{5}{2}S_{k+1}]$. We get analogues of \eqref{eq:consective cubes}
			\eqref{eq: fixed points} and \eqref{eq: connectivity induction}. Notice that if $(x,t)$ and $(\edtt{\mathbf{0}},0)$ are $k+1$-connected, \edts{either} there is a job of spatial size $S_{k+1}$ with center in the {time-space} set $E_{k+1}$, or there are at least $L+1=\abs{x}(2S_{k})^{-1}+1$ many {time-space} points $(t_i,x_i)$ {in $E_{k+1}$}, $i=0,\dots, L$, that are \edtff{$k$-connected}. In particular, for  $i=0,\dots, L-1$,
			\begin{equation} \label{eq: time condi}
				t_{i+1} >t_i, \quad x_i \in \left(\edtts{S_{k}}\mathbb{Z}\right)^d
			\end{equation}
			\begin{equation} \label{eq: space condi}
				\abs{x_{i+1} - x_i}
				 \edtts{= 2S_{k}}, 
			\end{equation}  
			and the event
			\begin{align} \label{eq: connectiv}
				G_i(s)=& \left\lbrace (t_i, x_i) \text{ and $(s,x_{i+1})$ are $k$-connected} \right\rbrace ,
			\end{align} occurs for every $s=t_{i+1}$.
			
			The probability for the first event {is bounded by}
			\begin{equation}\label{eq: ez linear bound} 
				P\left(\text{a job of size $S_{k+1}$ arrives in } E_{k+1}\right)= 1- \exp(-\lambda S_{k+1}^{-(d+\alpha)}\abs{E_{k+1}}) \leq {6^d} \lambda. 
			\end{equation}
		
			For the second event, we use arguments similar to Step 1. On one hand, from \eqref{eq: time condi} and \eqref{eq: space condi},  we have at most 
			$\edtts{5}^{dL}$  deterministic sequences $(x_j)$.
			On the other hand, for each consecutive pairs $x_i, x_{i+1}$, we consider the probability that \eqref{eq: connectiv} happens {for some $s= t_{i+1} \leq t_i+ S_k^{\alpha}$}. By the induction hypothesis, the probability is upper bounded by
			\[ P \left( G_i{(s) \ledd{\text{ occurs for some }} s\leq t_i+{S^{\alpha}_{k}} }\right) \leq A_{k}(\lambda).  \]
			Therefore, {we define a sequence of $L+1$ stopping times $(u'_i)_{i=0}^L$ recursively: $u'_0=0$, 
				\begin{equation}
					u'_i := \inf\{u'_{i-1}< s \leq S_{k+1}^\alpha:  (t_i, x_i) \text{ and $(s,x_{i+1})$ are $k$-connected }\},
				\end{equation} for $i=1,\dots,L,$ and get that each difference} $u'_i - u'_{i-1} $ stochastically dominates a random variable {$S_k^{\alpha}B'_i$}, 
			\begin{equation}\label{eq: stoch dominance2, hitting time}
			u'_i - u'_{i-1} \geq S_k^{\alpha}B'_i,
			\end{equation}
			where $B_i$ is a Bernoulli random variable with 
			\begin{equation}
				P(B'_i=0) = A_k(\lambda). 
			\end{equation}
			Hence, {for each deterministic sequence $(x_j)_{j=0}^{L+1}$, we \ledd{get} from Markov property that}
			\begin{align} \label{eq: key equation}
				&P\left(\text{there exists an increasing sequence $(t_i)_{i=0}^{L+1}$ such that $G_i(t_i)$ occurs for all $i$} \right) 
			\notag \\ 
				\leq& P\left(\edtfff{ \sum_{i=1}^{L} (u'_i-u'_{i-1})  \leq S_{k+1}^{\alpha}} \right) \leq P\left( \sum_{i=1}^{L-1} B'_i  \leq L^{\alpha} \right) \leq  \edts{\left(6 A_k(\lambda) \right) ^{\frac{L}{2}}},
			\end{align}
			where the last inequality is {from applying the Markov Inequality to a Binomial random variable $B(\edts{N},p)$}: when $a>0$, we have
			$$ 
			P(B(\edts{N},1-p)\leq \edts{K}) =P(B(\edts{N},p)\geq \edts{N-K})  \leq \mathbb{E}[e^{aB(\edts{N},p)}]e^{-a(\edts{N-K})} \leq \exp\left((e^{\edts{a}}-1)Np - a(N-K) \right).  
			$$
			When $N$ is large (such that $N^\alpha<\frac{N}{2}$), for any $K\leq N^\alpha$, we bound the last term by {taking $a = -\ln(2p)$}
			\[ \exp{ \edts{N}\left((e^{\edts{a}}-1)p-\frac{a}{2}\right)} \leq \exp \edts{N}\left(\frac{\ln(2p)}2 +(\frac{1}{2}-p) \right)  \leq \edts{ \left(6 p\right)^{\frac{\edts{N}}{2}} }. \]
			
			By \eqref{eq: ez linear bound} and \eqref{eq: key equation}, we have (from union bound) that
			\begin{equation} \label{eq: recursive equation for A_n}
				p_{k+1}(t,x) \leq {6^d}\lambda + \edtts{5^{dL}}\edts{\left(6 A_k(\lambda)\right)^{\frac{L}{2}}} =:A_{k+1}(\lambda). \end{equation}
			{Choosing \edts{$q_k(\lambda)= A_k^{\frac{1}{2}}(\lambda)$} for all \edts{$k$}, we get} \eqref{eq:recursive formula}.
			Together with \eqref{eq:dependency in lambda}, we also see from \eqref{eq: recursive equation for A_n} that there are positive constants $\lambda_d\leq \Lambda_d$ and $c_d$ depending only on $d$, such that ${A_{k+1}(\lambda) \leq c_d \lambda \edts{<1}}$, when $\lambda<\lambda_d$. Therefore, \eqref{eq: linear} follows from an induction.
	\end{enumerate}
\end{enumerate}
\end{proof}
 
 We use a similar argument of the second step to prove \edtt{Corollary} \ref{cor: exponential estimates}. 
 
 \begin{proof}(Corollary \ref{cor: exponential estimates})
 	Without losing generality, we can assume that $x$ and $t$ are of the form ${\abs{x}
 		= 2k m S_n}$, and $t= m S_n^\alpha$ for some positive integers $k,m$. 
	 \edts{We first notice that the term $ S_{n}^{-(1-\alpha)}\frac{\abs{x}}{t} $ in the exponent of \eqref{eq: general}  is of order $k$ so we aim to obtain probability bounds which \ledd{are} geometric in  $k$}. Notice also that if $(t,x)$ and $(0,\mathbf{0})$ are $n$-connected, there exists at least $2m$ consecutive time-space points that are $n$-connected, \ledd{and} every two consecutive points are $kS_n$ apart. Therefore, we consider the collection of $2m$ (deterministic) points
 	\[  \mathcal{E}_m  = \{ (x_i)_{i=0}^{2m}: x_0=0,  x_i \in \left(\ledd{ S_n}\mathbb{Z}\right)^d, \abs{x_{i}- x_{i-1}}
 	 = kS_n ,\text{ for all $i$}  \}. \]
 	 Notice that \ledd{there is a constant $C_d$ depending only on $d$} such that the cardinality of $\mathcal{E}_m$ is bounded by 
 	\begin{equation} \label{eq:  size of Em}
 		\abs{\mathcal{E}_m} \leq \left(\frac{C_d \ledd{k^{d-1}}}{4} \right)^m.
 	\end{equation}
 	 For each sequence of $(x_i)_{i=0}^{2m}  \in \mathcal{E}_m$, we define a sequence of stopping times inductively: $u_0 =0 $, and 
 	\[u_i := \inf\{s \geq u_i: (u_i,x_i) \text{ $(s,x_i)$ and  $(u_{i-1},x_{i-1})$ are $n$-connected} \}  \] By Proposition \ref{lm: d>1 case}, $u_{i+1} - u_i$ is dominated by the random variable $S_n^{\alpha}B_i$, where $B_i$ is a Bernoulli random variable with 
 	\[1-p=P(B_i=0) = q_n(\lambda)^{k-1},\]
 	and
 	\[   q_n(\lambda)< c_d \lambda<1 \] when $\lambda<\lambda_d$, for some $\lambda_d$ depending only on $d$.
 
 	Therefore, we use the union bound, the Markov property, and \eqref{eq:  size of Em} to conclude that, there exists some constants $c'_d$ and $\lambda_{d,0}<\min\{\lambda_d,\frac{1}{c_d'}\}$ depending on dimension $d$, so that when $\lambda<\lambda_{d,0}$
 	\begin{align}p_n(t,x) &\leq \sum_{ (x_i) \in \mathcal{E}_m} P\left(\exists (u_i)_{i=0}^{2m}:u_{i-1}< u_i< t,   \text{$(u_{i},x_{i})$ and $(u_{i-1},x_{i-1})$ are $n$-connected for all $i$}   \right)
 	\notag \\
 	&\leq \sum_{ (x_i) \in \mathcal{E}_m} P\left( B(2m, p) \leq m \right)  
 	\notag \\
 	&\leq  	 \abs{\mathcal{E}_m} \cdot P\left( B(2m, q_n(\lambda)^{k-1}) \geq  m \right) \leq  \left(C_d \ledd{k^{d-1}} q_n(\lambda) ^{(k-1)}\right)^m \leq (c'_d \lambda)^{k-1}, \label{eq: geo decay with integer space-time points}
 	 \end{align}
 	  where in the last line we use the classical result for a Binomial variable $B(2n,q)$ with parameters $q$ and $n$, 
 	 \[P\left(B(2n,q) \geq n\right)\leq  \sum_{k= n}^{2n} \binom{2n}{k} q^k  \leq 2^{2n} q^n.  \] Then, it is standard to derive \eqref{eq: general} from \eqref{eq: geo decay with integer space-time points} by removing the assumption that $\abs{x}
 	 = 2k m S_n$ and $t= m S_n^\alpha$ for positive integers $k$ and $m$.
 \end{proof}

\section{Exponential Moments of the Maximal Numbers}\label{sec: leading order}
The main object of this section is to estimate the exponential moments of the maximal numbers of jobs with the temporal size $T_j$ along $n$-admissible paths. The exponential moments help us bounding probabilities of events $A^c_{t,j}$, see \eqref{eq: overgrowth of temporal jobs}. Let
\begin{equation}\label{eq: def maximal number}
X^{(n)}_{s,t;x,j} = \sup n_j(\gamma^{(n)}_{s,t})
\end{equation} where the supremum is taken over all $n$-admissible paths $\gamma^{(n)}_{s,t}$ with the initial point $(s,x)$. Since we are mostly interested in the case when $j$ is fixed, and $s=0$, we focus on $X^{(n)}_{t} = X^{(n)}_{0,t;x,j}$. Recall that when $A$ is large enough, $(T_n ,S_n)$ satisfy properties \eqref{eq: well-separated}--\eqref{eq: useful ones}. \ledd{As in  \eqref{eq: arrival rate for jobs}, we continue to assume $c_1=1$.}  

We use an induction on $n$. There are two types of estimates for $X^{(n)}_{t} = X^{(n)}_{0,t;x,j}$, which are exponential moments tailored to different job sizes
. The first estimate, see Proposition \ref{lm: leading order}, \ledd{is the base case. It} says that the maximal number of jobs with the temporal size $T_j$ grows linearly with scale $T_j^{-(1+\delta)}$, when only jobs with spatial sizes at most $S_j$ are in the system. The second estimate, see Proposition \ref{lm: no influence} is the inductive \ledd{step. It} implies that the growth of the maximal number is not affected much by jobs with spatial sizes larger than $S_{j+1}$, when $S_{j+1}$ is large enough, see \eqref{eq: increments in b} below. We should remark that as large spatial jobs are involved, the parameter of exponential moments actually decreases. The proof of Proposition \ref{lm: leading order} { is }at the end of subsection \ref{subsec: small jumps}, and the proof of Proposition \ref{lm: no influence} { is }at the end of subsection \ref{subsec: influence from large jumps}.
\edts{Recall from \eqref{eq: linear} in Corollary \ref{cor: exponential estimates} that $\lambda_{d,0}$ is a constant depending only on $d$.}
\begin{proposition} (Leading Order)\label{lm: leading order}
	\rcomm{Recall that  $\delta>0$.} Assume that $T_j$ and $(S_n)_{n\leq j}$ satisfy \eqref{eq: leading order condition}, and \eqref{eq: useful ones}. Then \edts{there exists positive constants $\lambda_{d,1}<\lambda_{d,0}$, $C'_{d,1}$, depending only on $d$, such that for any $\lambda<\lambda_{d,1}$, for any $t>0$,} 
	\begin{equation}\label{eq: inductive hypo}
	\mathbb{E}\left[e^{ X^{(j)}_t} \right] \leq e^{ b t}
	\end{equation}
	where $b\edts{=C'_{d,1}} T_j^{-(1+\delta)} $.
\end{proposition}

We fix a constant \ledd{$K = \frac{d}{\alpha \kappa} > \max(\frac{d}{\alpha},3)$}, and {put $g_n  := K^{-n}$} for all $n\geq 0$. The second proposition says that we can estimate exponential moments of $X_t^{(n+1)}$ given an estimate for $X_t^{(n)}$, at a cost of reducing the exponents by a factor $K^{-1}$ and increasing the factor from $b_n$ to $b_{n+1}$ by \edtt{a small amount, see \eqref{eq: increments in b} below.}

\begin{proposition}(Influence from Large Jumps) \label{lm: no influence}
	\edtt{Recall that $g_1 = K^{-1} < \frac{\alpha}{d}$.}	Assume that for any $t>0,$ $X^{(n)}_t$ satisfies 
	\begin{equation}\label{eq: inductive hypothesis}
	\mathbb{E}\left[e^{ a X^{(n)}_t} \right] \leq e^{a b_n t}
	\end{equation} for some $b_n>0$ and some $a \in (0,1]$, then when $\lambda < \lambda_{d,0}$, there exists some positive constant $C_K$  depending on $d$, such that	\begin{equation}\label{eq: inductive step}
		\mathbb{E}\left[e^{\edtt{a}g_{\edtt{1}} X^{(n+1)}_t} \right] \leq e^{\edtt{a}g_{1} b_{n+1} t},
		\end{equation} where $b_{n+1}$ is a constant satisfying 
	\begin{equation}\label{eq: increments in b}
	b_{n+1}-b_n = C_K\cdot \edtt{a^{-1}K}\cdot S_{n+1}^{-(\alpha - \edtt{d}K^{-1})}.
	\end{equation} 
\end{proposition} 

\begin{remark}\label{rm: joint distribution} Recall that in \eqref{eq: temporal number} and \eqref{eq: def maximal number}, a job with sizes $(R,\tau)=(S_{k+j}, T_j)$ for some $k\leq n$, is counted in $n_j(\gamma^{(n)}_{s,t})$ if its arrival $(x,t')$ satisfies \eqref{eq: intersection}, \[\gamma^{(n)}_{s,t}(t'-),\gamma^{(n)}_{s,t}(t') \in B(x,S_{k+j}),   \] It is possible that the change of $\gamma^{(n)}_{s,t}$ at time $t'$, \[\abs{\gamma^{(n)}_{s,t}(t'-)-\gamma^{(n)}_{s,t}(t')}
		,\]  is small compared to  $S_{k+j}$ or there is no spatial change,\[\gamma^{(n)}_{s,t}(t'-)=\gamma^{(n)}_{s,t}(t').\] The conclusion of Proposition \ref{lm: no influence} covers these two cases.
	 \end{remark}

With an induction and \eqref{eq: concrete choice of S, T,0} for a concrete example of $(S_n,T_n)$), we get the following result from Propositions \ref{lm: leading order}, \ref{lm: no influence}. It says that a small exponential moment of $X^{(j+n)}_t$ is growing linearly in $t$ under the scale $T_j^{-(1+\delta)}$. Since the sequence $(b_{n+j})$ is increasing, we get an upper bound $b_j'$ from \edts{\eqref{eq: inductive hypo}, \eqref{eq: increments in b} and \eqref{eq: no influence from upper}}. 
\begin{corollary} \label{cor: exponential moments} Recall that $\kappa \edtff{<} \min\left(\frac{1}{8}, \frac{\delta}{4(1+\delta)} \right) $\ledd{, and $K= \frac{d} {\alpha\kappa} \in (3,\rho)$.}
	Under the assumption of Proposition \ref{lm: leading order}, \edtt{if we have $(S_{n+j})_{n\geq 1}$ with \eqref{eq: no influence from upper}
	and  \eqref{eq: useful ones},
	then} \edts{there are constants $\edttff{b'_{j}}=\edts{(C'_{d,1}+C_K)} T_j^{-(1+\delta)}$ and $\lambda_{d,1}>0$, such that} for any $t>0$ , and \edtt{$g_n = \left(\frac{\kappa}{d}\right)^{n}\cdot \alpha^{n} < \alpha^{n}$,} 
	\begin{equation} \label{eq: exp estimate}
	\mathbb{E}\left[e^{g_n X^{(j+n)}_t} \right] \leq e^{g_n \edttff{b'_{j}} t},
	\end{equation} when $\lambda<\lambda_{d,1}$.
\end{corollary}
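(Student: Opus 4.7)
The plan is to establish the corollary by iterating Proposition \ref{lm: no influence} starting from the base case given by Proposition \ref{lm: leading order}. Setting $K = d/(\alpha\kappa)$ we have $g_n = K^{-n} = (\alpha\kappa/d)^n$, which matches the claimed form. Moreover $K > d/\alpha$ since $\kappa < 1$, and $K > 3$ since $\kappa < 1/8$ and $\alpha \leq 1$, so the hypotheses of Proposition \ref{lm: no influence} (including $K>3$) are satisfied. The base case $n=0$ is exactly Proposition \ref{lm: leading order}, producing some $b_j < T_j^{-(1+\delta)}$ with $\mathbb{E}\left[e^{X^{(j)}_t}\right] \leq e^{b_j t}$.

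For the inductive step, I would apply Proposition \ref{lm: no influence} successively at levels $j+1, j+2, \ldots, j+n$. The $m$-th application takes us from level $j+m-1$ to level $j+m$ with input exponent $a = g_{m-1}$ and output exponent $g_m = g_{m-1} K^{-1}$. The specific choice $K = d/(\alpha\kappa)$ yields $\alpha - dK^{-1} = \alpha(1-\kappa)$, so estimate \eqref{eq: increments in b} becomes
\begin{equation*}
b_{j+m} - b_{j+m-1} < C_K \cdot g_{m-1}^{-1} \cdot K \cdot S_{j+m}^{-\alpha(1-\kappa)} = C_K K^m S_{j+m}^{-\alpha(1-\kappa)}.
\end{equation*}
Telescoping and recalling $\Delta t_{2(j+m)-1} = S_{j+m}^{\alpha}$ then gives
\begin{equation*}
b_{j+n} \leq b_j + C_K \sum_{m=1}^n K^m \Delta t_{2(j+m)-1}^{-(1-\kappa)}.
\end{equation*}

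The remaining step is to control this sum using \eqref{eq: no influence from upper}. Since $\varrho = 10 + 1/\alpha + d/\kappa$, one checks $K = (1/\alpha)(d/\kappa) \leq \varrho \cdot \varrho = \varrho^2$, hence $K^m \leq \varrho^{2m}$ for every $m \geq 1$. Applying \eqref{eq: no influence from upper} at index $i = j$ (where $\Delta t_{2j} = T_j^{1+\delta}$) therefore gives
\begin{equation*}
\sum_{m=1}^n K^m \Delta t_{2(j+m)-1}^{-(1-\kappa)} \leq \sum_{m>0} \varrho^{2m} \Delta t_{2(j+m)-1}^{-(1-\kappa)} \leq 2 \varrho^{-j} T_j^{-(1+\delta)}.
\end{equation*}
Choosing $A$ in \eqref{eq: concrete choice of S, T,0} large enough that $2 C_K \varrho^{-j} \leq 1$ for all $j \geq 1$, and combining with $b_j < T_j^{-(1+\delta)}$, yields $b_{j+n} < 2 T_j^{-(1+\delta)}$. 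Setting $b'_j := b_{j+n}$ (note the bound is uniform in $n$) gives the conclusion \eqref{eq: exp estimate}. Remark \ref{rm: joint distribution} ensures no additional issue arises from possible dependence between $(S_{n+j}, T_j)$.

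The main obstacle is verifying that the factor $K^m$ arising from the iteration can be absorbed into the $\varrho^{2m}$ appearing in \eqref{eq: no influence from upper}; this is precisely why the constants $K$, $\varrho$, and $\kappa$ in the paper are coupled as they are. Once this check is made, the argument is a clean telescoping sum plus one application of the summability condition.
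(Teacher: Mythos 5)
Your proof is correct and follows essentially the same route as the paper: iterate Proposition \ref{lm: no influence} from the base case of Proposition \ref{lm: leading order}, telescope the increments \eqref{eq: increments in b} with $K=d(\alpha\kappa)^{-1}$ so that $K^m\leq\varrho^{2m}$, and absorb the resulting sum via \eqref{eq: no influence from upper} to get $b'_j<2T_j^{-(1+\delta)}$, which is exactly the paper's argument (and you are in fact slightly more careful with the indexing and the check $K\leq\varrho^2$). The only cosmetic slip is phrasing the final absorption as ``choose $A$ so that $2C_K\varrho^{-j}\leq 1$'' even though $C_K$ and $\varrho$ do not depend on $A$; the correct reading, as in the paper, is that for $A$ large the gap between $\Delta t_{2(j+m)-1}^{-(1-\kappa)}$ and $\Delta t_{2j}^{-1}$ is wide enough to swallow the constant $C_K$.
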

\begin{proof} The proof is elementary. From Proposition \ref{lm: leading order} and \ref{lm: no influence}, there exists an increasing sequence $(b_{j,n})_{n\geq 1}$ such that \eqref{eq: exp estimate} holds  for any $t>0$ if we replace $b'_{j}$ by $b_{j,n}$ for any $n\geq 1$. In particular, this sequence $(b_{j,n})_{n\geq 1}$ satisfy \eqref{eq: increments in b} for all $n\geq 0$ when $b_{j,0}=b$, where $b$ is from \eqref{eq: inductive hypo}. We only need to verify that 
	\begin{equation}\label{eq: to verify}
		\edttff{b'_j} = \lim_{n\to \infty} b_{j,n} <\edts{\left(C'_{d,1}+C_K\right)} T_j^{-(1+\delta)}.\end{equation}
 We sum the differences and get
	 \[  
	 \edttff{b'_{j}}-\edttff{b_{j,0}} <  \sum_{n>j} C_K \varrho^{2(n+1)} S_{n+1}^{-\alpha(1-\kappa)}, \] \edts{which is bounded by $\edts{C_K} T_j^{-(1+\delta)}$ from \eqref{eq: no influence from upper}. Therefore, \eqref{eq: to verify} follows from  \eqref{eq: inductive hypo}.}  
\end{proof}

\subsection{Small Jumps and Leading Order}\label{subsec: small jumps}

We now prove Proposition \ref{lm: leading order} with the tools introduced in section \ref{sec: boundary process, modified paths}. The proof of Proposition \ref{lm: leading order} works for any dimension $d\geq 1$. The argument can also be extended to {dealing with $(\alpha,\beta)$ in }region V. For a fixed $j$, we drop the subscript $j$ and choose 
\edts{the increment of time $T = T_j^{1+\delta}$}. The choice of \edts{$T = T_j^{1+\delta}$} is only used in subsection \ref{subsec: small jumps}, and in subsection \ref{subsec: influence from large jumps}, we use $\edts{T} = S_{n+1}^{\alpha}$.  

Before giving the detailed proof, we summarize the steps. We {use} an elementary inequality, which can be seen as a generalization of union bounds. If we have a (deterministic) countable collection $I$ of positive random variables $Z_i$, then the exponential moment of the maximum is bounded above by the sum of the exponential moments,
\begin{equation} \label{eq: generailzed union bound}
\mathbb{E}\left[ \sup_{i\in I} e^{Z_i} \right] \leq \sum_{i\in I} E\left[e^{Z_i}\mathbb{1}_{\{Z_i\neq 0\}}\right]+\edts{P(\sup_{i \in I} Z_i=0)} \leq \sum_{i\in I} E\left[e^{Z_i}\right].
\end{equation} In our case, we want to \edts{encode} $j$-admissible paths by collections of {time-space} boxes with a number of centers, which are chosen according to some rules, see \rcomm{\eqref{eq: in cubes}, \eqref{eq: covings}, \eqref{eq: fix distance}, \eqref{eq: initialization for points}} below. \edts{The way to choose centers and boxes are similar to that in the proofs of classical Vitalli Covering Lemma or Greedy Lattice Animal argument.} Then to apply \eqref{eq: generailzed union bound}, we {take} $Z_i$ as the total number of jobs with temporal size $T_j$ \edts{inside} the collection of {time-space} boxes, and take $I$ as the set of all \edtts{sequences of} chosen centers.  On one hand, {we bound the exponential moment of $Z_i$ by using Poisson random variables and an upper bound of the probability to connect two time-space points from Corollary \ref{cor: exponential estimates}}. The {exponential moment} decays geometrically in the number of centers $m_k$ at a rate which is linear in $\lambda$, see \eqref{eq: geometric decay for prob} below. On the other hand, we count different ways of obtaining these {collections} of {time-space} boxes, which { is} exponential in the number of centers. The exponential rate for different ways only depends on dimension $d$, but not on $\lambda$. Therefore, we can estimate the exponential moment by applying \eqref{eq: generailzed union bound} when $\lambda$ is sufficiently small, see \eqref{eq: key exponential estimate} below. In particular, the argument and the bounds are independent of the index $j$.

 \begin{proof}(Proposition \ref{lm: leading order}) 
We fix time to be \edts{$t= L\cdot T$}, for some large integer $L$. Notice that \edts{$X^{(j)}_t$} is super-additive, we have \edts{$\mathbb{E}\left[e^{ X^{(j)}_{r+s}} \right] \geq   \mathbb{E}\left[e^{ X^{(j)}_r} \right] \mathbb{E}\left[e^{ X^{(j)}_s} \right]$ for any $r,s\geq 0$}. If we can show that \eqref{eq: inductive hypo} holds for a large time  \edts{$t= L \cdot T$}, then \eqref{eq: inductive hypo} also holds for any positive time. We { estimate} \edts{$\mathbb{E}\left[e^{ X^{(j)}_t} \right]$} by considering sequences of (deterministic) {time-space} boxes, which cover $j$-admissible paths.

First, we partition the {time-space} set \edts{$[0,t] \times \mathbb{Z}^d$} into boxes of the form $(s,x)+U_j$, where 
\begin{equation}\label{eq: box conditions}
U_j = [0,\edts{T}]\times[-\edts{S^{1-\alpha}_jT},\edts{S^{1-\alpha}_jT}]^d, \text{ and } (s,x) \in \left(\edts{T\cdot} \mathbb{N} \right)\times\left(2S_j^{1-\alpha} T\cdot \mathbb{Z}\right)^d  .
\end{equation} Each box has a {time-space} volume $\abs{U_j}= \edts{ (2S_j^{1-\alpha})^d \cdot T^{d+1}}$, and the number of jobs with temporal size $T_j$ and 'centers' of arrivals in the box is a Poisson random variable with mean {
$$
\lambda2^d c_2 T_j^{-(1+\beta)} \left(S_j^{1-\alpha}T \right)^d\cdot T,
$$ where $c_2$ is the normalizing constant from Theorem \ref{thm: existence model in IV}.} By condition \eqref{eq: leading order condition}, the mean is strictly smaller than
\begin{equation} \label{eq: leading order}
	\lambda  2^d {c_2}=  b_j \edts{\cdot T < 1},
\end{equation}
where $b_j = \edts{\lambda 2^d  {c_2} T_j^{-(1+\delta)}<T_j^{-(1+\delta)}}$ when $\lambda$ is strictly smaller than {$\frac{1}{2^d c_2}$}.

Then, we have encodings of paths: given a generic $j$-admissible path $\gamma^{(j)}$ on the time interval $I_k= [k\edts{T}, (k+1)\edts{T}]$, we first cover $\gamma^{(j)}$ by a collection of boxes of the form \eqref{eq: box conditions}, then we extract a sub-collection of disjoint boxes, and lastly we cover the original collection of boxes by enlarging boxes in the sub-collection. More precisely, we do the following. Since the spatial size of jobs in $j$-admissible path are at most $S_j$, we can find an integer $m_k\geq 1$ and a sequence of $m_k$ (stopping) times $s_{i,k}\in  I_k $, such that distances between two consecutive points on $\gamma^{(j)}$ at these times are between $4\edts{S^{1-\alpha}_jT}$ and $5\edts{S^{1-\alpha}_jT}$: 
\begin{align}  
s_{0,k}&= k\cdot T,\\ 
s_{i,k}&= \min\left(\edts{(k+1)\cdot T, } \inf\left\{\edts{ s_{i-1,k} \leq u \leq (k+1) T}: 4\edts{S^{1-\alpha}_jT} \leq  \abs{\edtff{\gamma^{(j)}}(u) -\edtff{\gamma^{(j)}}(\edts{s_{i-1,k}}) }
 \leq 5\edts{S^{1-\alpha}_jT}    \right\}\right),  \label{eq: important node}
\end{align}
for $i =1,2,\dots, m_k$. For each time $s_{i,k}$, $k=1,2,\dots,m_k$, we can choose a center $x_{i,k} \in \left(2\edts{S^{1-\alpha}_jT} \cdot \mathbb{Z}\right)^d$ such that the center is $S^{1-\alpha}_j T$-close to $\gamma^{(j)}(s_{i,k})$, 
\begin{equation} \label{eq: in cubes} 
	\abs{x_{i,k} - \gamma^{(j)} (s_{i,k})}
	\leq S^{1-\alpha}_jT. 
\end{equation} If there is more than one point in $\left(2\edts{S^{1-\alpha}_jT} \cdot \mathbb{Z}\right)^d$ satisfying 
\eqref{eq: in cubes}, we can choose according to certain rules since there are only finitely many such points.
With these centers $\mathbf{x}_k=(x_{i,k})_{\edts{i=1}}^{m_k}$, and \eqref{eq: important node},\eqref{eq: in cubes}, we obtain a covering $U_{\mathbf{x}_k}$ of the admissible path $\gamma^{(j)}$ on the time interval $I_k$ by taking unions of a collection of boxes with side lengths $10S_j$,
\begin{equation} \label{eq: covings}
U_{\mathbf{x}_k} := \bigcup_{i=1}^{m_k}  \left((k\edts{T},x_{i,k}) + [0,\edts{T}]\times[-5\edts{S^{1-\alpha}_jT},5\edts{S^{1-\alpha}_jT}]^d \right) \supset \gamma^{(j)}(I_k).  
\end{equation}
Therefore, on the time interval $I_k$, $\gamma^{(j)}$ corresponds to an encoding, which consists of $m_k$ centers $\mathbf{x}_k=(x_{i,k})_{i=1}^{m_k}$ in $\left(2\edts{S^{1-\alpha}_jT} \cdot \mathbb{Z}\right)^d$. 

For every \edts{encoding  $\mathbf{x}_k=(x_{i,k})_{\edts{i=1}}^{m_k}$ on $I_k$}, we \edts{denote by} $M_{\mathbf{x}_k}$ the number of arrivals of jobs with temporal size $T_j$ inside $U_{\mathbf{x}_k}$,
\[ M_{\edtff{\mathbf{x}}_k} = \edts{\abs{ \left\{(y,t)\in U_{\mathbf{x}_k}: (t,T_j,B)\in \Phi_y \text{ for some $B \subset [-S_j,S_j]^d$}  \right\} }}, \] and 
denote by $E_{\edtff{\mathbf{x}}_k}$ \edts{the event depending on $\mathbf{x}_k=(x_{i,k})_{\edts{i=1}}^{m_k}$,
\[ E_{\edtff{\mathbf{x}}_k} = \{ \text{the covering $U_{\mathbf{x}_k}$ contains a $j$-admissible path $\gamma^{(j)}$ with \eqref{eq: in cubes} }\}.  \]}
We consider $\mathbb{E}\left[e^{ M_{\edtff{\mathbf{x}}_k}} \cdot \mathbb{1}_{E_{\edtff{\mathbf{x}}_k}} \right]$.  By \eqref{eq: leading order}, \eqref{eq: covings}, $M_{\mathbf{x}_k}$ is a Poisson random variable with parameter at most \[ C_d m_k  b_j \edts{\cdot T}, \] for some constant $C_d$ depending on the dimension.
From \eqref{eq: important node}, \eqref{eq: in cubes}, and the fact that $\left(x_{i,k}\right)\subset \left(2\edts{S^{1-\alpha}_jT}\cdot \mathbb{Z}\right)^d$, \edts{we have that} the distance \edts{$\abs{x_{i,k} - x_{i-1,k}}
	$}  between two consecutive centers is one of the numbers 
\begin{equation} \label{eq: fix distance}
4\edts{S^{1-\alpha}_jT} \text{ or } 6\edts{S^{1-\alpha}_jT},
\end{equation} 
\edts{for all $i<m_k$, and it can be 
\begin{equation} \label{eq: fix distance 2}
 r \cdot \edts{S^{1-\alpha}_jT},
\end{equation} where $r=0,1,2,3$, when $i=m_k$.	 }
Although these $m_k$ centers may not be distinct, there is a sequence of $\edts{m_k-1}$ points in $ I_k\times\left(2\edts{S^{1-\alpha}_jT}\cdot \mathbb{Z}\right)^d $ such that every two consecutive points are $j$-connected with the spatial distance at least $4\edts{S^{1-\alpha}_jT}$. Therefore, we get from \edts{Corollary \ref{cor: exponential estimates}} and \eqref{eq: fix distance} \edts{that } the probability $P(E_k)$ is bounded above, \edts{ for all $\lambda<\lambda_{d,0}$
\begin{equation*} 
P(E_k) \leq \edts{(c'_d \lambda )}^{6\edts{(m_k-1)}},
\end{equation*} where $c'_d,\lambda_{d,0}$ are from Corollary \ref{cor: exponential estimates}.} By the Cauchy-Schwartz Inequality, we have  
\edtt{
\begin{equation}\label{eq: geometric decay for prob}
\mathbb{E}\left[e^{ M_{\edtff{\mathbf{x}}_k}} \cdot \mathbb{1}_{E_{\edtff{\mathbf{x}}_k}} \right] \leq \exp\left(\frac{1}{2}(e^{2}-1)C_d m_k  \edts{b_j T}   \right)\edts{(c'_d \lambda )}^{\edts{3(m_k-1)}}. 
\end{equation}
}
 By choosing $x_{0,k}$ to be the last point in $\mathbf{x}_{k-1} = \left(x_{i,k-1}\right)_{i=0}^{m_{k-1}}$, for \edts{$k=1,\dots, L$}
\begin{equation} \label{eq: initialization for points}
x_{0,0} = \edtt{\mathbf{0}}, \text{ and } x_{0,k} = x_{m_{k-1}, k-1},
\end{equation} \edts{ we get from \eqref{eq: fix distance}, \eqref{eq: fix distance 2} and \eqref{eq: initialization for points} that the number of encodings $(\mathbf{x}_k)_{k=0}^{L-1}$ on the time interval $[0,t] = \bigcup_{k=0}^{L-1} I_k$ is at most 
\begin{equation} \label{eq: combinatorics}
 \prod_{k=0}^{L-1} \left( \edts{h_d}^{m_k}\right)
\end{equation} for some \edts{$h_{d}\geq 2$} depending on dimension $d$.} Now we apply \eqref{eq: geometric decay for prob} and \eqref{eq: combinatorics} to \eqref{eq: generailzed union bound}. \edts{ We} get from the independence of $\left(e^{M_{\edtff{\mathbf{x}}_k}}\cdot\mathbb{1}_{E_{\edtff{\mathbf{x}}_k}}\right)_k$, 
\begin{align}
	\mathbb{E}\left[e^{ X^{(j)}_{\edts{t}} } \right] &\leq \mathbb{E}\left[\sup_{(\edts{\mathbf{x}}_k)_{k=0}^{L-1}}\prod_{k=0}^{L-1} \left( e^{ M_{\edtff{\mathbf{x}}_k}}\cdot \mathbb{1}_{E_{\edtff{\mathbf{x}}_k}} \right) \right]  
	\notag
	\\
	&\leq \sum_{(\edts{\mathbf{x}}_k)_{k=0}^{L-1}} \prod_{k=0}^{L-1} \mathbb{E}\left[ e^{ M_{\edtff{\mathbf{x}}_k}}\cdot \mathbb{1}_{E_{\edtff{\mathbf{x}}_k}} \right] 
	\notag 
	\\
	&\leq 1+ \sum_{(m_k) \in \mathbb{Z}_+^L} \exp\left(\frac{1}{2}(e^{2}-1)C_d \edts{b_j \cdot T} \cdot \sum_{k=0}^{L-1}{m_k}    \right)\left(\edts{h_d} \edts{(c'_d \lambda )^3}\right)^{\sum_{k=0}^{L-1} \edts{(m_k-1)}}\edts{\left(h_d\right)^L}. 
\label{eq: key exponential estimate 0} 
\end{align} \edts{From \eqref{eq: leading order}, we have that $\exp\left(\frac{1}{2}(e^{2}-1)C_d b_j \cdot T \right)$ is bounded uniformly in $j$.} \edts{Therefore, from \eqref{eq: linear},  there exists some $\lambda_{d,1}\leq \lambda_{d,0}$ depending on $d$, such that 
\begin{align}\label{eq: cond 1 for lambda_d,1}
 h_d c'_d \lambda_{d,1}\exp\left(\frac{1}{2}(e^{2}-1)C_d b_j \cdot T\right) <\frac{1}{2}.
\end{align}  As a consequence, when $L$ is large, we use 
the identity that for $q<1$, \[\sum_{(m_k) \in \mathbb{Z}_+^L} q^{\sum_{k=0}^{L-1}(m_k-1)} = \prod_{k=0}^{L-1} \left(\sum_{i=0}^{\infty}q^i \right) =   \left(\frac{1}{1-q}\right)^L  \] to bound \eqref{eq: key exponential estimate 0} by
\begin{align}\label{eq: key exponential estimate}
 \left(3 h_d \exp\left(\frac{1}{2}(e^{2}-1)C_d b_j \cdot T\right) \right)^L. 
\end{align} Notice that $b_j\cdot T =  \lambda 2^d$, we find a constant $C'_{d,1} = b \cdot T >0$ depending only on $d$, and bound \eqref{eq: key exponential estimate} by
 \[   \exp( C'_{d,1}  \cdot L   ) = \exp(b \cdot t).    \]}
\end{proof} 

\subsection{Influence from Large Jumps}	\label{subsec: influence from large jumps}
The proof of Proposition  \ref{lm: no influence} is similar to that of Proposition \ref{lm: leading order}, which also relies on \edts{Corollary \ref{cor: exponential estimates}}. We divide the proof into a few steps. \edts{Throughout this subsection, we {use } the time scale $T = S_{n+1}^{\alpha} .$} 

\edts{We first introduce a new system by adding some artificial jobs without any temporal workload at fixed {time-space} points to the original system. We consider the maximal number $Y^{(n+1)}_t$ in the new system,
	\begin{equation}\label{eq: maximal number of new system}
		 Y^{(n+1)}_{t} = \sup n_j(\gamma^{(n+1)}_{0,t}),\end{equation} where $\gamma^{(n+1)}_{0,t}$ where the supremum is taken over all $n+1$-admissible paths $\gamma^{(n+1)}_{0,t}$ with the initial point $\ledd{(0,
	\mathbf{0})}$ in the new system. \eqref{eq: maximal number of new system} is the same as 
	\eqref{eq: def maximal number} except that $Y^{(n+1)}_{t}$ is for the new system. Due to the additional artificial jobs, the maximal number $Y^{(n+1)}_t$ stochastically dominates $X^{(n+1)}_t$ in the original system, and waiting times between jobs of size strictly greater than $S_{n}$ are bounded by $S_{n+1}^{\alpha}$, which gives a natural discretization of the new system in time scale $T$.} Then, we estimate the exponential moment of $Y^{(n+1)}_t$, $\mathbb{E}\left[e^{g_{1} Y^{(n+1)}_t} \right]$ via an integral formula for a large time \edts{${t= T\cdot L}$, where $L$ is a large integer}. This formula corresponds to a decomposition of $Y^{(n+1)}_t$ according to \edts{the} occurrences of jobs of spatial sizes $S_{n+1}$ and artificial jobs. We bound terms in the formula by {\eqref{eq: general}} from Proposition \ref{lm: d>1 case}. By applying a super-additive argument to $X^{(n+1)}_t$, we get \eqref{eq: inductive step} for any small time $t$. 

More precisely, we obtain the following two lemmas. The first lemma says that we can estimate $\mathbb{E}\left[e^{g_{1} X^{(n+1)}_t} \right]$ by adding artificial jobs of radius $\frac{S_{n+1}}{4}$ at fixed {time-space} points \edts{$(s,x)$, for $s\in T\cdot \mathbb{N}$, ${x\in S_{n+1} \mathbb{Z}^d}$.} And the maximal number of the new system has an estimate in terms of a function $w_n(s,x)$. In the second lemma, we use \edts{Corollary \ref{cor: exponential estimates}} to show that $w_n(s,x)$ decays geometrically in  $x$ in the scale $S_{n+1}$.

For convenience, we say two {time-space} points $(u,x)$ and $(v,y)$ are $n$-connected via \edts{some} $S_{n+1}$ neighbors, if there exists an n-admissible path with the initial point $(u,x')$ and the final point $(v,y')$, such that 
\begin{equation}\label{eq: n-connected via  $S_{n+1}$ neighbors}
\edts{\abs{x'-x}
	, \abs{y'-y}
} \leq S_{n+1}.
\end{equation} We should notice that a pair of $n$-connected points are also $n$-connected via \edts{some} $S_{n+1}$ neighbors, but not vice versa. The difference is the condition on end points, see \eqref{eq: ending points distance} and \eqref{eq: n-connected via  $S_{n+1}$ neighbors}. \ledd{ Recall that $\lambda_{n+1} = \lambda S_{n+1}^{-(d+\alpha)}$.}

\begin{lemma}(A {time-space} integral)\label{lm: integral formula} Let $Y^{(n+1)}_t$ be \edts{ the maximal number} when there are additional artificial jobs with radius $\frac{S_{n+1}}{4}$, \edts{temporal size $0$,} and centers at {time-space} points \edts{$(s,x)$, where $(s,x)$ are {time-space} points in $S_{n+1}^{\alpha}\mathbb{N}\times S_{n+1}\mathbb{Z}^d $}. Then for a large time $\edts{t= T\cdot L}= S_{n+1}^\alpha L$, $L\in \mathbb{N}$, the maximal number satisfy 
	\begin{align}\label{eq: integral formula}
	\mathbb{E}\left[e^{g_{1} X^{(n+1)}_t} \right] \leq \mathbb{E}\left[e^{g_{1} Y^{(n+1)}_t} \right] 
	&\leq \sum_{N \in \mathbb{N}} \sum_{(x'_i)\in \left(\mathbb{Z}^d\right)^{N+L} } (\lambda_{n+1}t)^N e^{-\lambda_{n+1}t}\cdot
	\notag \\
	& \idotsint_{0=t_0<\dots< t_N<t} \,dt_1dt_2\cdots dt_N \prod_{i=0}^{N+L-1}  w_n(t'_{i+1}-t'_i,x'_{i+1}-x'_i),
	\end{align} where \edtt{$x_0 =\mathbf{0} \in \mathbb{Z}^d$},
	\edtt{\begin{equation}\label{eq: kernel}
	w_n(s,x) = \mathbb{E}\left[e^{g_{1} \max_{|y|\leq \frac{1}{2}S_{n+1}}  X^{(n)}_{0,s;y}} \cdot \mathbb{1}_{\{\text{$(s,x)$ and $(0,\mathbf{0})$ are $n$-connected via \edts{some} $S_{n+1}$ neighbors} \}} \right]e^{g_{1}},
	\end{equation}
	}
	and $(t'_i)_{i=0}^{N+L}$ is {the} increasing sequence which is the union of $(t_i:i\leq N)$ and multiples of \edts{$ T$} up to $t$.
\end{lemma}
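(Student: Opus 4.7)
The plan is to combine a monotonicity argument with a decomposition of $(n+1)$-admissible paths according to their ``big jumps''. First, I would verify the stochastic dominance $X^{(n+1)}_T \leq Y^{(n+1)}_T$ by direct monotonicity: every $(n+1)$-admissible path in the original system remains $(n+1)$-admissible in the augmented system (the artificial jobs only add extra jumps of radius $S_{n+1}/4 \leq S_{n+1}$, still permissible at scale $n+1$), so the supremum of $n_j$ over a larger family of paths can only increase.

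Next, I would decompose the maximizing $(n+1)$-admissible path in the augmented system. Fix $T = L\Delta T$, and call a jump ``big'' if it uses either (a) a real size-$S_{n+1}$ job, occurring at a random time $t_k$ with center $x_k \in \mathbb{Z}^d$ for $k=1,\ldots,N$, or (b) an artificial job, available at each of the deterministic times $\Delta T, 2\Delta T,\ldots,L\Delta T$ with center to be chosen from $S_{n+1}\mathbb{Z}$ (giving the additional centers $x_{N+1},\ldots,x_{N+L}$). Merging all big-jump times in increasing order yields the sequence $t'_0 = 0 < t'_1 < \ldots < t'_{N+L} = T$. Between consecutive big jumps the path uses only jobs of spatial size $\leq S_n$, so it is $n$-admissible on each $[t'_i, t'_{i+1}]$; its endpoints lie within $S_{n+1}/2$ of $x_i$ and $x_{i+1}$ respectively, so the segment's contribution to $n_j$ is dominated (after translation by $x_i$) by $\max_{|y|\leq S_{n+1}/2} X^{(n)}_{0, t'_{i+1}-t'_i; y}$. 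The mere existence of such a segment is exactly the $n$-connectivity via $S_{n+1}$ neighbors between $(x_i, t'_i)$ and $(x_{i+1}, t'_{i+1})$ appearing in the indicator inside $w_n$, while the big jump at $t'_{i+1}$ contributes at most one additional unit to $n_j$, accounting for the factor $e^{g_1}$ in the definition of $w_n$.

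Finally, I would assemble the bound. Expanding $e^{g_1 Y^{(n+1)}_T}$ as the exponential of a supremum and applying a union bound replaces the sup by a sum over all path decompositions. By the independence of Poisson marks of different spatial sizes and the strong Markov property at each $t'_i$, the contributions from successive intervals are independent conditional on the big-jump data, so taking expectations factorises into the product $\prod_{i=0}^{N+L-1} w_n(x_{i+1}-x_i, t'_{i+1}-t'_i)$. Summing over $N$, the centers $(x_i)$, and integrating over the ordered Poisson arrival times $0 < t_1 < \ldots < t_N < T$ then produces the claimed Poisson weight $(\lambda_{n+1}T)^N e^{-\lambda_{n+1}T}$ coming from the intensity $\lambda_{n+1}$ of size-$S_{n+1}$ events. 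The main technical obstacle I anticipate is the bookkeeping against the infinite spatial lattice: size-$S_{n+1}$ jobs whose centers lie far from the realised path cannot intersect it, yet the sum in the claimed formula ranges over all of $\mathbb{Z}^d$. The union bound tolerates this overcounting, and the $n$-connectivity indicator inside $w_n$ enforces the effective spatial restriction, but some care is needed to combine the Poisson density, the no-event factor, and the ordered time integration into a single consistent upper bound rather than a delicate equality.
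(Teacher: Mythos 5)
Your proof follows essentially the same route as the paper's: stochastic dominance of $Y^{(n+1)}_T$ over $X^{(n+1)}_T$ from adding artificial jobs, decomposition of an $(n+1)$-admissible path at the ``big jump'' times (real size-$S_{n+1}$ arrivals and deterministic artificial-job times) into $n$-admissible segments, factorisation of the conditional expectation segment by segment via independence of the Poisson marks at different scales, and finally re-expression of the sum over configurations using the ordered-arrival-time density of the rate-$\lambda_{n+1}$ Poisson process. The bookkeeping caveat you raise at the end (overcounting over the infinite spatial lattice absorbed by the connectivity indicator in $w_n$, and the rough handling of the Poisson weight as an upper bound rather than an equality) is present at the same level of informality in the paper's own proof, so your proposal matches both the method and the rigor of the original.
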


\begin{proof}
	As artificial jobs of \edts{radius $\frac{S_{n+1}}{4}$} are at fixed {time-space} points, every (n+1)-admissible path in the original system is also an (n+1)-admissible path in the new system. Therefore, ${Y^{(n+1)}_t \geq X^{(n+1)}_t}$ stochastically. 
		To verify that the integral is an upper bound of $\mathbb{E}\left[e^{g_{n+1} Y^{(n+1)}_T} \right]$, we use the independence of 
		arrivals of jobs with different sizes $S_{n+1}$ and $S_k, k\leq n$.
	
	Recall that $\edts{t=S_{n+1}^\alpha L}$ is a large fixed time. 
	For every realization of arrival processes of jobs with size $S_{n+1}$, we denote by $\mathcal{C}_{n+1}$ the (random) collection of all finite sequences of consecutive arrivals of (centers of) artificial jobs or jobs with \ledd{the} spatial size $S_{n+1}$ within the time interval $[0,t]$,
	\begin{align} \label{eq: collection of jobs centers}
		\mathcal{C}_{n+1} =  \bigcup_{N=0}^\infty &\lbrace \edts{(\vec{t},\vec{x})}=(t'_i,\edts{x'_i})_{i=1}^{N+L}\subset (\mathbb{R}_+ \times \mathbb{Z}^d): t'_i<t'_{i+1}, t'_{N+L}= S_{n+1}^\alpha L, t_0 =0, x_0 =\mathbf{0},
		\notag
		\\
		&\text{ no {artificial} jobs or jobs of spatial size $S_{n+1}$ with the center $x'_{i+1}$ arrive between $(t'_i,t'_{i+1})$}  ,  
		\notag
		\\
		&\edts{\text{ a job of spatial size $S_{n+1}$ or an artificial job with the center $x'_i$ arrives at $t'_i$}}
		\rbrace.
	\end{align} Since every (n+1)-admissible path $\gamma^{(n+1)}$ can be decomposed into a collection of n-admissible paths connected by a collection of jobs with the spatial size $S_{n+1}$ and artificial jobs, each $\gamma^{(n+1)}$ corresponds to a finite sequence of (random) {time-space} points $(\vec{t},\vec{x})=(t'_i,x'_i)_{i=1}^{N+L}$ in $\mathcal{C}_{n+1}$, such that $(t'_i,x'_i)_{i=1}^{N+L}$ encodes consecutive arrivals of artificial jobs or jobs with the spatial size $S_{n+1}$ that intersect $\gamma^{(n+1)}$.
	For convenience, we take the closest center of artificial job if $\gamma^{(n+1)}$ does not intersect any artificial job at a time $iS_{n+1}^{\alpha}$, for $i=1,\dots,L$. The collection of artificial jobs \edts{``intersecting" $\gamma^{(n+1)}$} are labeled by their centers $(iS_{n+1}^{\alpha},\hat{x}_i)$, for $i=1, 2, \dots, L${; the} collection of jobs of size $S_{n+1}$ intersecting $\gamma^{(n+1)}$ are labeled by a sequence of {time-space} points $(t_i,x_i)$, $i =1, 2, \dots, N$. The union $(t_i',x_i')_{i=1}^{N+L}$ of $(iS_{n+1}^{\alpha},\hat{x}_i)_{i=1}^L$ and $(t_i,x_i)_{i=1}^N$ belongs to $\mathcal{C}_{n+1}$.

	 We obtain upper bounds of $e^{g_{1} Y^{(n+1)}_t}$ by considering different (n+1)-admissible paths characterized by elements $(t_i',x_i')_{i=1}^{N+L}$ in $\mathcal{C}_{n+1}$.   Therefore, we get an upper bound of \edtt{$e^{g_{1} Y^{(n+1)}_{\edts{t}}}$},
	\begin{align}
	       e^{g_{\edtt{1}} Y^{(n+1)}_{\edts{t}}}  \leq& \sum_{\edts{(\vec{x},\vec{t})}\in \mathcal{C}_{n+1}} \prod_{i=0}^{N+L-1}  \left(e^{g_{\edtt{1}}\cdot
	       	 \max_{|y|
	       	 	\leq S_{n+1}} X^{(n)}_{\edts{t'_i,t'_{i+1};y+x'_i}}} \right.
       \notag \\       
        &\cdot \left. \mathbb{1}_{\{\text{$(t'_i,x'_i)$ and $(t'_{i+1},x'_{i+1})$ are $n$-connected via \edts{some} $S_{n+1}$ neighbors}\}} e^{g_{\edtt{1}}}\right),
		 \label{eq: decomposition upper bound}
	\end{align} 
	where  $t_0=0, x_0=\mathbf{0}$, and the term $e^{g_{\edtt{1}}}$ is due to the fact that a job of size $S_{n+1}$ may contribute a job of temporal size $T_j$. It is not clear that \eqref{eq: decomposition upper bound} is finite on the right yet. We {take} expectation and get an upper bound which is the right hand side of \eqref{eq: integral formula}. In the proof of Proposition \ref{lm: no influence}, we {show } that the integral is finite.
	
	In fact, as job arrivals follow independent Poisson processes, we have
	\begin{equation*}
	\mathbb{E}\left[e^{g_{\edtt{1}} Y^{(n+1)}_t} \vert \mathcal{C}_{n+1} \right] \leq \sum_{(x,t)\in \mathcal{C}_{n+1}} \prod_{i=0}^{N+L-1}   w_n(t'_{i+1}-t'_i,x'_{i+1}-x'_i)
	\end{equation*} 
	where \ledd{$t_0=0, x_0=\mathbf{0}$}, and $w_n$ is defined by \eqref{eq: kernel}.  Also, for each fixed integer $N$ , \edtt{each fixed sequence of $\mathbb{Z}^d$-points $\mathbf{x} =(x'_i)_{i=0}^{N+L}$, and each fixed sequence of increasing times $(t'_i)_{i=0}^{N+L}$ ($(t'_i)_{i=0}^{N+L}$ is also the union of $(t_i)_{i= 1}^{N}$ and $\{jS_{n+1}^{\alpha}:0\leq j\leq L\}$), the density of obtaining $N$ non-artificial jobs at {time-space} points $(t'_i,x'_i)_{i=1}^{N+L}$ is}
	\[N!P(N_{\edts{t}}=N) \,dt_1dt_2\cdots dt_N =  (\lambda_{n+1}\edts{t})^N e^{-\lambda_{n+1}{\edts{t}}} \mathbb{1}_{\{0=t_0<t_1<\dots< t_N<{\edts{t}}\}} \,dt_1dt_2\cdots dt_N ,\] where $N_{\edts{t}}$ is a Poisson random variable with rate $\lambda_{n+1}{\edts{t}}$.
	As a consequence, the expectation of \eqref{eq: decomposition upper bound} is bounded by
	\begin{align}
	&\mathbb{E}\left[e^{g_{\edtt{1}} Y^{(n+1)}_{\edts{t}}} \right]  
	\notag\\ 
	\leq& \sum_{N} \sum_{(x_i)\in \left(\mathbb{Z}^d\right)^{N+L} } N! P(N_{\edts{t}}=N) \idotsint_{0=t_0<\dots< t_N<{\edts{t}}} \,dt_1dt_2\cdots dt_N \prod_{i=0}^{N+L-1}  w_n(t'_{i+1}-t'_i,x'_{i+1}-x'_i)
	\notag\\
	\leq& \sum_{N} \sum_{(x_i)\in \left(\mathbb{Z}^d\right)^{N+L} } (\lambda_{n+1}{\edts{t}})^N e^{-\lambda_{n+1}{\edts{t}}} \idotsint_{0=t_0<\dots< t_N<{\edts{t}}} \,dt_1dt_2\cdots dt_N \prod_{i=0}^{N+L-1}  w_n(t'_{i+1}-t'_i,x'_{i+1}-x'_i)
	\end{align}
\end{proof}

Due to the additional artificial jobs, we see that $t'_{i+1} - t'_i$ is at most $S_{n+1}^{\alpha}$. We estimate $w_n(s,x)$ by considering $x$ in scales of  $S_{n+1}$, and see that $w_n(s,x)$ decays geometrically in $x$ under this scale. This is the content of the second lemma, and it is a consequence of \edts{Corollary \ref{cor: exponential estimates}. }
\begin{lemma}\label{lm: regularity of kernel} \edtf{Recall that $K>3$.}
	Let $\ledd{w_n(s,x)}$ be defined by equation \eqref{eq: kernel}. Assuming $X^{(n)}_t$ satisfy \eqref{eq: inductive hypothesis}, we have the following estimates for $w_n(s,x)$, for any $\edts{s}\leq S_{n+1}^\alpha$,
	\begin{equation} \label{eq:estimate for kernel}
	\ledd{w_n(s,x)} \leq \begin{cases}
	C S_{n+1}^{\frac{\edtt{d}}{K}} e^{g_{1} (1+ b_n s)} &,   \text{ if $\abs{x}
		 \leq 3 S_{n+1}$} \\
	C S_{n+1}^{\frac{\edtt{d}}{K}} r_n^{i-1}(\lambda) e^{g_{1} (1+b_n s)} &,  \text{ if $(2i-1)  S_{n+1}< \abs{x}
		\leq  (2i+1)S_{n+1}$ for some $i\geq 2$ }
	\end{cases},
	\end{equation}
	where $C$ is a constant depending on  \edtt{$d$}, and $r_n(\lambda)$ is a constant depending on $S_n$, $S_{n+1}$ and $\lambda$. Furthermore, there is a constant $C_{d,2}$ depending on $d$, such that $r_n(\lambda)<C_{d,2} \lambda $ when $(S_n)$ satisfies \eqref{eq: useful ones} and $\lambda<\lambda_{d,0}$, where $\lambda_{d,0} $ is from Corollary \ref{cor: exponential estimates}.
\end{lemma}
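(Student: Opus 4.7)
The plan is to apply Hölder's inequality to decouple the exponential moment of the maximum of $X^{(n)}$ from the probability of the connectivity event, and then estimate each factor separately using the inductive hypothesis \eqref{eq: inductive hypothesis} for the former and Proposition \ref{lm: d>1 case} (together with Corollary \ref{cor: exponential estimates}) for the latter. Write $w_n(x,t) = e^{g_1}\mathbb{E}[Z \cdot \mathbb{1}_A]$, where $Z = \exp\bigl(g_1 \max_{|y|_\infty \leq S_{n+1}/2} X^{(n)}_{0,t;y}\bigr)$ and $A$ is the event that $(x,t)$ and $(\mathbf{0},0)$ are $n$-connected via $S_{n+1}$ neighbors. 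By Hölder with exponents $K$ and $K/(K-1)$,
$$\mathbb{E}[Z \cdot \mathbb{1}_A] \leq \mathbb{E}[Z^K]^{1/K}\, \mathbb{P}(A)^{(K-1)/K}.$$

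For the exponential-moment factor, since $g_1 = 1/K$ we have $Z^K = \exp(\max_y X^{(n)}_{0,t;y}) \leq \sum_{|y|_\infty \leq S_{n+1}/2}\exp(X^{(n)}_{0,t;y})$. By translation invariance of the graphical construction and the inductive hypothesis with $a=1$, each summand has expectation at most $e^{b_n t}$; summing over the $O(S_{n+1}^d)$ admissible lattice points $y$ yields $\mathbb{E}[Z^K]^{1/K} \leq C^{1/K}\,S_{n+1}^{d/K}\,e^{g_1 b_n t}$ for a dimensional constant $C$. Combined with the trivial bound $\mathbb{P}(A) \leq 1$, this already establishes the close case $|x|_\infty \leq 3 S_{n+1}$.

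For the far case, I would unpack the connectivity: if $(x,t)$ and $(\mathbf{0},0)$ are $n$-connected via $S_{n+1}$ neighbors with $(2i-1)S_{n+1} < |x|_\infty \leq (2i+1)S_{n+1}$, then there exist lattice points $x',y'$ with $|x'|_\infty,|y'-x|_\infty \leq S_{n+1}$ such that $(x',0)$ and $(y',t)$ are $n$-connected in the sense of Proposition \ref{lm: d>1 case}, and $|y'-x'|_\infty \geq (2i-3)S_{n+1}$. A union bound over the $O(S_{n+1}^{2d})$ pairs $(x',y')$, combined with Proposition \ref{lm: d>1 case} when $t \leq S_n^\alpha$ and with Corollary \ref{cor: exponential estimates} when $S_n^\alpha < t \leq S_{n+1}^\alpha$, gives an estimate of the form $\mathbb{P}(A) \leq C'S_{n+1}^{2d}\,(c'_d\lambda)^{c(i-1)(S_{n+1}/S_n)^{1-\alpha}}$. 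Raising this to the power $(K-1)/K$ and defining $r_n(\lambda)$ to absorb both the polynomial factor $S_{n+1}^{2d}$ and the per-step geometric decay produces the claimed $r_n(\lambda)^{i-1}$ bound; the uniform estimate $r_n(\lambda) < c_d\lambda$ for $\lambda < \lambda_d$ then follows from the linear-in-$\lambda$ bound on $q_n(\lambda)$ in Proposition \ref{lm: d>1 case}.

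The main obstacle will be absorbing the union-bound factor $S_{n+1}^{2d}$ (coming from the choice of endpoints of the $n$-admissible path) into the geometric decay $r_n(\lambda)^{i-1}$; this works only because assumption \eqref{eq: useful ones} forces $S_{n+1}^{1-\alpha}$ to dominate $\sum_{k \leq n}S_k^{1-\alpha}$, so the effective exponent in Corollary \ref{cor: exponential estimates} scales like $(S_{n+1}/S_n)^{1-\alpha}$ per unit of $i$, which for small $\lambda$ easily dwarfs any polynomial correction. A secondary bookkeeping point is the split between the regimes $t \leq S_n^\alpha$ and $S_n^\alpha < t \leq S_{n+1}^\alpha$, but the two yield estimates of the same qualitative form and combine without extra work.
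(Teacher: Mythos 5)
Your proposal is correct and follows essentially the same route as the paper: both control the exponential moment of the spatial maximum of $X^{(n)}$ by a factor $S_{n+1}^{d/K}$ via the $1/K$-power, and both split off the connectivity probability which decays geometrically by Proposition \ref{lm: d>1 case} and Corollary \ref{cor: exponential estimates}. The only difference is cosmetic: you use H\"older with exponents $(K,K/(K-1))$ plus a max-by-sum bound in both cases, whereas the paper applies a stochastic-dominance argument (domination of $\max_y X^{(n)}_{0,t;y}$ by $d\ln(1+2S_{n+1})+Z$ with $Z$ exponential) for $|x|_\infty\le 3S_{n+1}$ and Cauchy--Schwarz for the far case---both yield the same $S_{n+1}^{d/K}$ prefactor and absorb the polynomial union-bound cost into $r_n^{i-1}$ exactly as you describe.
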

\begin{proof}
	We {divide} the proof into two cases. 
	\begin{enumerate}
		\item When $\abs{x}
		\leq S_{n+1}$, assuming \eqref{eq: inductive hypothesis}, we have for any $z>0$, $\edts{s}>0$
		\begin{equation*}
		P\left( g_n (X^{(n)}_{s} - b_n \edts{s}) \geq z  \right) \leq \exp(-z)
		\end{equation*} 
		By the union bound, and stochastic dominance, we get 
		\begin{equation*}
		P\left( g_n \max_{|y|
			\leq S_{n+1}} (X^{(n)}_{0,\edts{s};y} - b_n \edts{s}) \geq d\ln (1+2S_{n+1}) + z  \right) \leq \exp(-z),
		\end{equation*} and $g_n \max_{|x|
		\leq \frac{1}{2}S_{n+1}} (X^{(n)}_{x,\edts{s}} - b_n \edts{s})$ is stochastically dominated by $\edtt{d}\ln(1+ 2S_{n+1}) + Z $, where $Z$ is an exponential random variable with rate 1. Therefore, 
		\begin{equation} \label{eq: stochastic dominance bounds}
		w_n(s,\mathbf{0}) = \mathbb{E}\left[e^{g_{\edtt{1}} \max_{|y|
				\leq S_{n+1}}  X^{(n)}_{0,\edts{s};y}} \right] e^{g_{\edtt{1}}} \leq (1+2S_{n+1})^{\frac{\edtt{d}}{K}} \frac{1}{1-\frac{1}{K}} e^{g_{\edtt{1}} (1+b_n \edts{s})}.
		\end{equation} Clearly, $w_n(s,x) \leq w_n(s,\mathbf{0})$ \edts{since the indicator function always takes value $1$ when $x=\mathbf{0}$}.
		
		\item When $(2i-1)  S_{n+1}< \abs{x}
		 \leq  (2i+1)S_{n+1}$ for some $i\geq 2$, we first get an upper bound similar to \eqref{eq: stochastic dominance bounds}. For any $s>0$,
		\[  \mathbb{E}\left[e^{ 2 g_{1} \max_{|x|
				\leq S_{n+1}}  X^{(n)}_{0,s;y}} \right] \leq (1+2S_{n+1})^{\frac{2d}{K}} \frac{1}{1-\frac{2}{K}} e^{2 g_{1} b_n \edts{s}}.  \]
	 Then by the Cauchy-Schwarz Inequality, we have 
		\begin{align} 
		\ledd{w_n(s,x)} \leq& \left(\mathbb{E}\left[e^{2 g_{1} \max_{|x|
				\leq S_{n+1}}  X^{(n)}_{s,x}} \right] P\left(\text{$(s,x)$ and $(0,\mathbf{0})$ are $n$-connected via \edts{some} $S_{n+1}$ neighbors}   \right)\right) ^{\frac{1}{2}} e^{g_{1}}  
		\notag
		\\
		\leq& (1+2S_{n+1})^{\frac{d}{K}} \left(\frac{1}{1-\frac{2}{K}}\right)^{\frac{1}{2}} e^{g_{1} (1+b_n \edts{s})} \tilde{p}_n(s,x), \label{eq: general estimate for kernel}
		\end{align}
		where $\tilde{p}_n(s,x) =  P(\text{$(s,x)$ and $(0,\mathbf{0})$ are n-connected via \edts{some} $S_{n+1}$ neighbors})^{\frac{1}{2}}$. 
		
		As $s\leq S_{n+1}^\alpha$, we get an upper bound for $\tilde{p}_n(s,x)$ by the corollary of Proposition \ref{lm: d>1 case}. More precisely, if $(s,x)$ and $(0,\mathbf{0})$ are $n$-connected via some $S_{n+1}$ neighbors, then there exists two points $y,z$ in $\left(S_n\mathbb{Z}\right)^d$ such that $\abs{y}
		, \abs{x-z}
		<\frac{S_{n+1}}{2}$, and $(0,y)$, $(t,x)$ are $n$-connected. Since $(2i-1)  S_{n+1}< \abs{x}
		\leq  (2i+1)S_{n+1}$, we get $2(i-1)S_{n+1}< \abs{y-z}
		\leq  2(i+1) S_{n+1}$. Therefore, by \eqref{eq: general} in Corollary \ref{cor: exponential estimates}, we get that there exist constant $C_{d,2}>c'_d$  only depending on $d$, such that
		\begin{equation}\label{eq: random walk estimate2}
		\tilde{p}_n(s,x) \leq  \sum_{x,y} p_n(s,x-y) \leq \left(\frac{S_{n+1}}{S_n}\right)^d (c_d' \lambda )^{i\left(\frac{S_{n+1}}{S_n}\right)^{1-\alpha}} \leq r_n(\lambda)^{i-1},
		\end{equation}
		for some  $r_n(\lambda)<c_d \lambda$ when ${S^{1-\alpha}_{n+1}> 10 \sum_{j\leq n} S_j^{1-\alpha}}$, and $\lambda <\edts{\lambda_{d,0}}$. By \eqref{eq: general estimate for kernel} and \eqref{eq: random walk estimate2}, we get \eqref{eq:estimate for kernel}. 
	\end{enumerate}
\end{proof}

\begin{remark} We may get better {bounds} for $\ledd{w_n(s,x)}$. However, geometrical decay of $\ledd{w_n(s,x)}$ in $x$ is sufficient for the proof of Proposition \ref{lm: no influence}. \end{remark}

Now we prove Proposition \ref{lm: no influence} from Lemmas \ref{lm: integral formula} and \ref{lm: regularity of kernel} for a large time \edts{$t$}. And we also get the estimate for any positive time $t$ using that  $X^{(n+1)}_t$ is super-additive.

\begin{proof} (Proposition \ref{lm: no influence})
	By a change of variables $y_{i+1} = \edts{x'_{i+1}-x'_i}=(y_{i,j})_{j=1}^d \in \mathbb{Z}^d$, we rewrite \eqref{eq: integral formula} as 
	\begin{equation}\label{eq: COV}
	\sum_{N=0}^\infty \sum_{(y_i)\in \left(\mathbb{Z}^d\right)^{(N+L)} } (\edtt{\lambda_{n+1}\edts{t}})^N e^{-\edtt{\lambda_{n+1}\edts{t}}} \idotsint_{0=t_0<t_1<\dots< t_N<\edts{t}}\edts{\prod_{i=0}^{N+L-1}  w_n(t'_{i+1}-t'_i,y_{i+1}) \,dt_1dt_2\cdots dt_N} , \end{equation}
	where $(t_i')_ {i=0}^{N+L}$ is {the} increasing sequence, which is also the union of $(t_i)_{i= 1}^{N}$ and $\{jS_{n+1}^{\alpha}:0\leq j\leq L\}$.
	In view of Lemma \ref{lm: regularity of kernel}, the product
	\begin{equation}
	\prod_{i=0}^{N+L-1}  w_n(t'_{i+1}-t'_i,y_{i+1}) \leq (Ce^{g_{\edtt{1}}} S_{n+1}^{\frac{\edtt{d}}{K}})^{N+L}  e^{g_{\edtt{1}} b_n T} {\edts{r_n(\lambda)}}^{\sum \edts{y'_{i,j}}},  
	\end{equation} where 
	\edttt{$y'_i = (y'_{i,j})_{j=1}^d = \left(\max \edtff{\left\{ \lfloor \frac{\abs{y_{i,j}}-3S_{n+1}}{2S_{n+1}} \rfloor ,0\right\} } \right)_{j=1}^d. $} Therefore, for each $y'_i$, there are at most $(6S_{n+1}+1)^d$ and at least $(4S_{n+1})^d$ different $y_i$ corresponding to it. We get an upper bound for terms in \eqref{eq: COV} by summing over $y_i$ according to $y_i'$, for every fixed $N$
	\begin{align}
	&\sum_{(y'_i)\in \mathbb{N}^{N+L} } (\edtt{\lambda_{n+1}\edts{t}})^N e^{-\edtt{\lambda_{n+1}\edts{t}}} \idotsint_{0=t_0<t_1<\dots< t_N<\edts{t}} \,dt_1dt_2\cdots dt_N (6^{\edtt{d}}e^{g_{\edtt{1}}}CS^{\edtt{d}+\frac{\edtt{d}}{K}}_{n+1})^{N+L} e^{g_{\edtt{1}} b_n \edts{t}} \edts{r_n(\lambda)}^{\sum \edtt{y'_{i,j}}} 
	\notag  \\
	= &   e^{g_{\edtt{1}} b_n \edts{t}} (6^{\edtt{d}}e^{g_{\edtt{1}}}CS^{\edtt{d}+\frac{\edtt{d}}{K}}_{n+1})^{N+L} \left( \sum_{(y'_i)\in \mathbb{N}^{N+L} } \edts{r_n(\lambda)}^{\sum \edtt{y'_{i,j}}} \right) \left( (\edtt{\lambda_{n+1}\edts{t}})^N e^{-\edtt{\lambda_{n+1}\edts{t}}} \idotsint_{0=t_0<t_1<\dots< t_N<\edts{t}} \,dt_1dt_2\cdots dt_N \right),  
	\notag \\
	=& e^{g_{\edtt{1}} b_n \edts{t}} (6^{\edtt{d}}e^{g_{\edtt{1}}}CS^{\edtt{d}+\frac{\edtt{d}}{K}}_{n+1})^{N+L} \left( \frac{1}{1-\edts{r_n(\lambda)}}\right)^{\edtt{d(N+L)}}  P(N_{\edts{t}} =N) ,\label{simplification}
	\end{align}
	where $N_{\edts{t}}$ is a Poisson random variable with rate $\edtt{\lambda_{n+1}\edts{t}} = \edtff{\lambda} S_{n+1}^{-(d+\alpha)}\edts{t}$. \edts{Recall that $K>3$. By Lemma \ref{lm: regularity of kernel}, there exists a constant $\lambda_{d,2} =\min\{\frac{1}{2c_d}, \lambda_{d,0}\}$ depending only on $d$,  such that for all $\lambda< \lambda_{d,2}$, we have that} \[\edts{r_n(\lambda)}< c_d\lambda< \frac{1}{2}.\]  Therefore, we sum over $N$ for \eqref{simplification} and get an upper bound for \eqref{eq: integral formula}, 
	\begin{align}
	\mathbb{E}\left[e^{g_{1} X^{(n+1)}_{\edts{t}}} \right] \leq	&\sum_{N=0}^\infty e^{g_{1} b_n \edts{t}} (6^{\edtt{d}}e^{g_{1}}CS^{\edtt{d}+\frac{d}{K}}_{n+1})^{N+L} \left( \frac{1}{1-\edts{r_n(\lambda)}}\right)^{d(N+L)}  P(N_{\edts{t}} =N) \notag \\ 
	\leq & e^{g_{1} b_n \edts{t}}  \mathbb{E}\left[   (12^de^{g_{1}}CS^{\edtt{d}+\frac{\edtt{d}}{K}}_{n+1})^{N_{\edts{t}}}\right] (12^de^{g_{1}}CS^{\edtt{d}+\frac{\edtt{d}}{K}}_{n+1})^{L} \notag \\
	\leq & e^{g_{1} b_n \edts{t}} \exp \left((12^de^{g_{1}}CS^{\edtt{d}+\frac{d}{K}}_{n+1})\lambda_{n+1}\edts{t} + S_{n+1}^{-\alpha} \ln (12^de^{g_{1}}CS_{n+1}^{d+\frac{d}{K}}) \edts{t}  \right), \label{eq: exponential growth 0}	\end{align} \edts{where in the last line, we use the fact that
	\[\mathbb{E}\left[ e^{bN_{\edts{t}}}\right] = \exp \left((e^b-1)\lambda_{n+1}\edts{t}\right) \leq \begin{cases}
		\exp \left(3b\lambda_{n+1}\edts{t}\right), \text{ when $b\leq 1$}\\
		\exp \left(e^b\lambda_{n+1}\edts{t}\right), \text{ when $b> 1$}
	\end{cases}. 
	\] 
Since $\lambda_{n+1}S_{n+1}^{d+\frac{d}{K}} =\lambda S_{n+1}^{-(\alpha-\frac{d}{K})} < \lambda_{d,2}S_{n+1}^{-(\alpha-\frac{d}{K})} $, and $\ln(12^de^{g_{1}}CS_{n+1}^{d+\frac{d}{K}}) < C_{d,2} S_{n+1}^{\frac{d}{K}} $ for some $C_{d,2}$ depending on $d$, we bound \eqref{eq: exponential growth 0} by }
\begin{equation}	\exp \left( (g_{1}b_n + \edts{C_{d,2}}S_{n+1}^{-(\alpha-\frac{d}{K})} ) \edts{t}\right). \label{eq: exponential growth }
 \end{equation}
	
	Since $X^{(n+1)}_{\edts{t}}$ is super-additive, we have \edts{ $\mathbb{E}\left[e^{g_{1} X^{(n+1)}_{r+s}} \right] \geq   \mathbb{E}\left[e^{g_{1} X^{(n+1)}_r} \right] \mathbb{E}\left[e^{g_{1} X^{(n+1)}_s} \right]$, for any $r,s
	\geq 0$. Therefore, we also have for any $t>0$,}
\edtt{	\[
	\mathbb{E}\left[e^{g_{1} X^{(n+1)}_t} \right] \leq \exp \left( \left(g_{1}b_n + \edts{C_{d,2}}S_{n+1}^{-(\alpha-\frac{d}{K})}\right)t\right),  \] 
}
	for some $C_{d,2}$ depending on $d$ when $K>3$. Lastly, we use a change of variables to replace $g_1$ by $a\cdot g_1$.
\end{proof}

\section{Proof of the Growth Estimate}\label{sec: proof of main estimate}
In this section, we use Corollary \ref{cor: exponential moments} to prove Proposition \ref{prop: precise estimate}. By conditioning on the occurrences of jobs of spatial size $S_{i+j}$ for $j\geq 2$, we use the $g_{i+2-k}$-th exponential moments for the maximal numbers $X_{t;k}$ of jobs with temporal size $T_k$ to bound the upper tails. Applying the union bound, we {get} \eqref{eq: no large spatial job}--\eqref{eq: detail estimate}.

\begin{proof}(Proposition \ref{prop: precise estimate})  For a fixed time $t$ between $l_{2i} \cdot{\Delta t_{2i}}$ and $l_{2i+2} \cdot {\Delta t_{2i+2}}$, we first recall the event {$B_i(t)$} from \eqref{eq: overgrowth of spatial jobs}
	\begin{align} \label{eq: no S_i+j occur}
	{B_i(t)}&= \{ \text{no job of spatial size $S_{i+j}, j\geq 2,$ is along an admissible path starting from $(0,\mathbf{0})$}   \} 
	\notag \\
	&=\{ \sup m_{i+j}(\gamma_{0,t})= 0, \text{ for all $j \geq 2$}  \},
	\end{align} 
	where the supremum are taken over all admissible paths $\gamma_{0,t}$ with the initial point $(0,\mathbf{0})$. We estimate the probability $P\left({B_i(t)}^c\right)$ by looking at {time-space} boxes $E_{i+j}=  [0,t]\times[-3S_{i+j},3S_{i+j}]^d $ for all $j>1$.  If ${B_i(t)}^c$ occurs, then either there is a job of spatial size $S_{i+j}$ inside $E_{i+j}$ for some $j\geq 2$, or there is a point $x$ in $\left(2S_{i+2}\mathbb{Z}\right)^d$ with $\abs{x}
 = 2S_{i+2}$, such that $(0,\mathbf{0})$ and $(t,x)$ are $i+1$-connected. As the arrivals for jobs of different spatial size follow independent Poisson processes, the first event occurs with a probability at most
	$$
	 \sum_{j\geq 2} 1-\exp\left(\abs{E_{i+j}}\lambda_{i+j}\right) \leq \lambda \cdot \sum_{j\geq 2}  S_{i+j}^{-\alpha} t , 
	$$
	where 
	$
	\lambda_{i+j} = \lambda P(R= S_{i+j}) \leq \lambda S_{i+j}^{-\alpha},
	$ 
	{from \eqref{eq: arrival rate for jobs}.} From Proposition \ref{lm: d>1 case} and Corollary \ref{cor: exponential moments}, the second event occurs with probability at most
	$$
	  \left(\frac{S_{i+2}}{S_{i+1}} \right)^d q_{i+1}(\lambda)^{\left(\frac{S_{i+2}}{S_{i+1}}\right)}.   
	$$
	When $\lambda<\lambda_d$, and $(S_i)$ satisfy \eqref{eq: concrete choice of S, T,0}, we have that for any $ t\in [l_{2i} \cdot{\Delta t_{2i}},l_{2(i+1)} \cdot{\Delta t_{2(i+1)}}] $ 
	\begin{equation*} \label{eq: no large jump}
		P(B^c_i(t))\leq 2C_1 S_{i+2}^{-\alpha}t
	\end{equation*} which implies  \eqref{eq: no large spatial job}.

		To get \eqref{eq: no large temporal job}, we use Corollary \ref{cor: exponential moments}  and the fact that $(X_{t;j})$ are integers. For $j>i$,
	\begin{align}
	P\left(A^c_{t,j}(0),\text{ and } {B_i(t)}\right) &=P\left(X^{(i+2)}_{t;j}\geq 1 \right)
	\notag \\
	&=P\left(X^{(j+2)}_{t;j}\geq 1 \right)
	\notag \\ 
	&\leq (\exp( 2g_{2}  T_{j}^{-(1+\delta)} t)-1)(e^{g_{2}} -1)^{-1}  
	\notag \\
	&\leq 6T_j^{-(1+\delta)} t, \label{eq: estimates1}
	\end{align} where we use $2g_2 T_{i+1}^{-(1+\delta)} t < l_{2i+2} <\frac{1}{2}$, and $\exp(x)-1\leq  3x$ for $0\leq x\leq 1$ in the last line.
	
	The arguments for \eqref{eq: usual temporal job} and \eqref{eq: unusual temporal job} are very similar. Notice that on the event ${B_i(t)}$, we have its maximal number $X_{t;j}$ the same as $X^{(i+2)}_{t;j}$  
$$ 
	X_{t;j}\mathbb = \sup n_j(\gamma_t) =X^{(i+2)}_{t;j},  
$$ 
	where the supremum is taken over all admissible paths $\gamma_t$ with the initial point $(0,\mathbf{0})$. Therefore, for any $y\geq 0$, we get
	\begin{align}\label{eq: simple union}
	P(X_{t;j} \geq y ) \leq P\left(X^{(i+2)}_{t;j} \geq y, \text{ and } {B_i(t)} \right) + P\left(B^c_i(t)\right) \leq  P\left(X^{(i+2)}_{t;j} \geq y \right) + P\left({B_i(t)}^c\right).
	\end{align}
	By Corollary \ref{cor: exponential moments}, we apply the Markov Inequality, and obtain \eqref{eq: usual temporal job},
	\begin{align*} 
	P\left(A^c_{t,j}(3), \text{ and } {B_i(t)}\right) &\leq P\left(X^{(i+2)}_{t;j} > 3 T_j^{-(1+\delta)} t \right)
	\notag \\ 
	&\leq \mathbb{E}\left[e^{g_{i+2-j} X^{(i+2)}_{t;j}} \right] \exp(-3g_{i+2-j} T_j^{-(1+\delta)} t)  
	\notag \\
	&\leq \exp( -g_{i+2-j}2T_j^{-(1+\delta)} t  ).
	\end{align*}

	Following a similar computation, we use \eqref{eq: well-separated} to get \eqref{eq: unusual temporal job} 
	\begin{align*}\label{eq: prob for moderate plate}
	P\left( A^c_{t,i}(3 \Delta t_{2i}^{2\kappa}) ,\text{ and } {B_i(t)} \right) &\leq \exp\left(g_2 r_{2i} - 3  l_{2i} \Delta t_{2i}^{2\kappa} \right) 
	\notag \\
	&\leq \exp\left( - 2\Delta t_{2i}^{\kappa}\right).
	\end{align*}
	Lastly, we use union bound and \eqref{eq: no large spatial job}--\eqref{eq: unusual temporal job} to get \eqref{eq: detail estimate}. We show only the second case \ledd{when $t$ belongs to the interval $[r_{2i} {\Delta t_{2i}}, l_{2i+2} {\Delta t_{2i+2}})$:}
	\begin{align*}
	&1- P\left( A_{t,j}(3  \Delta t_{2j}^{2\kappa} ) \text{ for all $j$}\right)\\
	 \leq& P\left({B_i(t)}^c \right) + \sum_{j> i} P\left({B_i(t)} \text{ and  $A^c_{t,j}(3  \Delta t_{2j}^{2\kappa})$ } \right) + \sum_{j\leq i} P\left({B_i(t)} \text{ and  $A^c_{t,j}(3  \Delta t_{2j}^{2\kappa})$ } \right)\\
	\leq & 2C_1 S_{i+2}^{-\alpha} t  + \sum_{j> i} 6 T_j^{-(1+\delta)}t + \sum_{j\leq i} \exp( -g_{i+2-j}T_j^{-(1+\delta)} t  ).
	\end{align*} By condition \eqref{eq: well-separated}, we get for some $C,C'>0$,	\[\sum_{j>i} 6 T_j^{-(1+\delta)} + 2C_1 S_{i+2}^{-\alpha} \leq  C'T_{i+1}^{-(1+\delta)} , \quad \sum_{j\leq i} \exp\left( -g_{i+2-j}T_j^{-(1+\delta)} t \right) \leq \exp\left( -CT_i^{-(1+\delta)} t \right),\] which implies \eqref{eq: detail estimate} for $c'=3$.
\end{proof} 

\section{Acknowledgments}
This work was supported by the \edtff{SNF grant 200021L -- 169691}.

%

\end{document}